\newcommand{\C}{\mathcal{C}}
\newcommand{\F}{\mathcal{F}}
\newcommand{\K}{\mathcal{K}}
\newcommand{\R}{\mathbb{R}}
\newcommand{\N}{\mathbb{N}}
\newcommand{\W}{\mathcal{W}}
\newcommand{\Z}{\mathbb{Z}}
\newcommand{\Nei}{\mathcal{N}}
\newcommand{\dom}{\mathrm{dom}}
\newcommand{\diam}{\mathop{\mathrm{diam}}}
\newcommand{\floor}[1]{\lfloor #1 \rfloor}
\newcommand{\ceil}[1]{\lceil #1 \rceil}
\renewcommand{\int}[1]{\mathrm{int(#1)}}
\newcommand{\im}[1]{\mathrm{im}(#1)}
\newtheorem{theorem}{Theorem}[section]
\newtheorem{lemma}{Lemma}[section]
\newtheorem{proposition}{Proposition}[section]
\theoremstyle{definition}
\newtheorem{definition}{Definition}[section]
\theoremstyle{remark}
\newtheorem{example}{Example}[section]
\newtheorem{remark}{Remark}[section]
\newcommand{\Zb}{\overline{\Z}}
\newcommand{\smalltile}[1]{\raisebox{-1mm}{\includegraphics{Tiles/smalltile#1}}}
\newcommand{\pattern}[1]{\begin{smallmatrix}#1\end{smallmatrix}}
\newcommand{\whitetile}{\raisebox{-.5mm}{\includegraphics{Tiles/tiles-7}}}
\newcommand{\blacktile}{\raisebox{-.5mm}{\includegraphics{Tiles/tiles-8}}}
\newcommand{\gen}{\mathscr{L}^0}
\newcommand{\nar}{\mathscr{L}^1}
\newcommand{\twin}[1]{\overline{#1}^a}
\definecolor{mypurple}{rgb}{1, .5, 1}
\definecolor{myred}{rgb}{1, .5, .5}
\definecolor{myblue}{rgb}{.5, .5, 1}
\definecolor{mygreen}{rgb}{.5, 1, .5}
\definecolor{darkpurple}{rgb}{.8, .4, .8}
\definecolor{darkred}{rgb}{.8, .4, .4}
\definecolor{darkblue}{rgb}{.4, .4, .8}
\definecolor{darkgreen}{rgb}{.4, .8, .4}
\newcommand{\purple}{\textcolor{darkpurple}{purple}}
\newcommand{\red}{\textcolor{darkred}{red}}
\newcommand{\green}{\textcolor{darkgreen}{green}}
\title{Local generation of tilings\footnote{The previous version from November 13, 2024 contained a mistake: the proof of Proposition 5.6 was incorrect. In this version, the statement and proof are corrected.}}
\author{Tom Favereau\footnote{\'Ecole des Mines de Nancy} ~and Mathieu Hoyrup\footnote{Universit\'e de Lorraine, CNRS, Inria, LORIA, Nancy, 54000, France}}
\begin{document}
\maketitle

\begin{abstract}
In this article, we investigate the possibility of generating all the configurations of a subshift in a local way. We propose two definitions of local generation, explore their properties and develop techniques to determine whether a subshift satisfies these definitions. We illustrate the results with several examples.
\end{abstract}

\tableofcontents

\ifdefined\addparentheses
\newcommand{\ti}[1]{(#1)}
\else
\newcommand{\ti}[1]{#1}
\fi

%SSSSSS
\section{Introduction}
If ones fixes a Wang tileset, then the problem of generating a tiling of a plane is difficult in general. In the worst case, Hanf and Myers \cite{Hanf74,Myers74} showed the existence of tilesets that tile the plane, but for which no algorithm can produce any tiling. %This result witnesses in particular the fact that local rules can induce long-range correlations. % and that local matching rules do not in general imply local construction rules.
On the other end of the spectrum, certain tilesets admit a simple procedure that can generate any tiling of the plane, in which the choices of the tiles in each cell only require local interaction (we give an example in the next paragraphs).

In this article, we investigate the problem, given a fixed Wang tileset or a more general subshift, of generating all the tilings of the plane in a local way, i.e.~with a minimal amount of communication between the cells. Of course, one expects that only very few tilesets admit such local generation procedures. Our goal is to give precise definitions capturing this idea of local generation, and then to settle whether each given tileset or subshift satisfies these definitions. Let us stress that our perspective is orthogonal to the viewpoints provided by computability and complexity theory, and that the locality of a procedure is a form of combinatorial rather than computational simplicity.

Let us start by illustrating a simple local generation procedure on a motivating example, which is the Wang tileset shown in Figure \ref{fig_boundaries}. It is the tileset with two colors (on the picture, an edge is either white or crossed by a black line) containing all the tiles having an even number of each color.
\begin{figure}[!ht]
\centering
\includegraphics{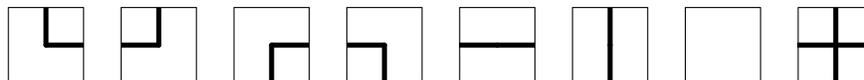}
\caption[A tileset admitting a local generation procedure]{A tileset admitting a local generation procedure}\label{fig_boundaries}
\end{figure}

There is a simple local way of generating any tiling of the plane. First assign arbitrary bits to the points of the grid~$\Z^2$, then for each edge~$(a,b)$ between two adjacent points, draw a black line crossing that edge if and only if the bits assigned to~$a$ and~$b$ differ. Intuitively, the bit assignment partitions~$\Z^2$ into connected components on which the assignment is constant, and the black lines are the boundaries between these regions. %interpret two adjacent points with the same bits as being part of a common region, and draw boundaries between different regions. In more concrete terms, for each edge~$(a,b)$ between two adjacent points, draw a line segment crossing that edge precisely when the bits assigned to~$a$ and~$b$ are different.
This construction is local in the sense that if two cells are not neighbors (horizontally, vertically or diagonally), then the choices of the tiles that are placed in those cells are made independently. The procedure that generates a tiling takes as input an arbitrary assignment of bits to the points of the grid, each cell then reads the bits assigned to its $4$ corners and determines its tile. Note that this procedure enables adjacent cells to communicate in the sense that they share some input information. Similar procedures are used in computer graphics to generate textures in an efficient and local way \cite{Lagae09}.
%
%\begin{figure}[!ht]
%\centering
%\includegraphics{Tilesets/tileset_corners-0}
%\caption[Tiles determined by corners]{Tiles determined by the colors of the corners}\label{fig_bicolor}
%\end{figure}
%
%A simplified version consists in forgetting the colors and only keeping the boundaries between the regions having different colors, inducing the tileset of size~$8$ shown in Figure \ref{fig_boundaries}.
%
%\begin{figure}[!ht]
%\centering
%\includegraphics{even-28}
%\caption[Tiles determined by corners]{Tiles determined by the colors of the corners: boundaries}\label{fig_boundaries}
%\end{figure}
%
%These tilesets are built so that tilings can be easily generated, in a local way: if two cells are not neighbors (horizontally, vertically or diagonally), then the choices of the tiles that are placed in these cells are made independently. The procedure that generates a tiling takes as input an arbitrary assignment of colors to the points of the grid, each cell reads the colors of its $4$ corners and determines its tile. Note that this procedure enables adjacent cells to ``communicate'' in the sense that they share some input information.

%It produces only valid tilings, and can produce all of them. In other words, it is a surjective function from~$\{0,1\}^{\Z^2}$ to the set of valid tilings, which is continuous (the choice of a tile in a cell only depends on a finite number of input bits) and commutes with the shift action (all the cells have the same rule for choosing the tile from the input bits). Such functions are called sliding block codes in the literature \cite{LM95}.

A first natural definition, that would capture this generation procedure % associated to the tileset in Figure \ref{fig_boundaries}, %tilesets in Figures \ref{fig_bicolor} and \ref{fig_boundaries}, 
would be that the subshift is a factor of a fullshift, i.e.~the image of a fullshift by a so-called sliding block code \cite{LM95}, which is a finite rule determining the output content of a cell from the input content of a neighborhood of that cell. %continuous function that commutes with the shift.
Indeed, the set of all possible assignments of bits is the fullshift~$\{0,1\}^{\Z^2}$, and the function that converts this color assignment to the assignment of tiles to the cells can be evaluated in a local way. Equivalently, a sliding block code is a continuous function that commutes with the shift and can be seen as cellular automata with possibly different input and output alphabets.

However, this tentative definition would be very restrictive in several ways:
\begin{itemize}
\item Only nearby cells can communicate, i.e.~share input information,
\item All the cells are required to have the same rule.
\end{itemize}
We will relax these restrictions in our definitions. However, there is an important feature that we want to keep:
\begin{itemize}
\item The content of each cell is determined by an input region whose \emph{size} is uniformly bounded across the cells.
\end{itemize}
When this restriction is met, it gives rise to a procedure in which all the cells can determine their contents in parallel, reading a constant number of input symbols, so this procedure runs in constant parallel time.

We introduce two possible formalizations of the intuitive idea of local generation by defining two classes of subshifts~$\gen\subseteq\nar$. The class~$\gen$ is defined as the smallest class of subshifts containing certain basic subshifts (to be defined) including fullshifts, and closed under finite products and factors. The class~$\nar$ is based on the notion of \emph{narrow} function which is a relaxation of sliding block codes, in which the content of each output cell is determined by the content of a finite region of input cells whose size is bounded, but whose shape is arbitrary. The definition of~$\nar$ is purely combinatorial in that it does not interact with the dynamical structure of the subshift. We never address the computability or complexity of the underlying procedures, and only focus on some sort of combinatorial complexity.

We develop the theories of the classes~$\gen$ and~$\nar$, identify two obstructions to being in these classes, and apply these techniques to a number of subshifts.  We also show that one-dimensional subshifts of finite type (SFTs) all belong to~$\gen$. It is not surprising as those subshifts are known to have many good properties. In particular, their configurations are nothing else than bi-infinite walks in a finite graph, which are intuitively easy to generate; the result shows that the class~$\gen$ is sufficiently rich to capture this intuition. In a forthcoming article \cite{FH24b}, we apply the techniques develop in this article to classify all the Wang tilesets made with bicolor tiles having an even number of each color. We briefly summarize this classification at the end of this article.

The study of the class~$\gen$ is related to the problem of expressing a subshift as a factor of another subshift, which was investigated in \cite{Marcus79,Boyle83,BPS10,BMP18}, but our results are rather orthogonal.

%The companion article \cite{FH24b} classifies all the Wang tilesets with two colors and an even number of edges of each color, using the techniques developed in this article. We decided to present this classification separately because the present article is already long enough.

The article is organized as follows. In Section \ref{sec_def} we present the definition of the two classes of subshifts~$\gen$ and~$\nar$. In Section \ref{sec_dynamics} we introduce the notions from symbolic dynamics that will play an important role in our analysis, recalling classical definitions and stating useful properties. In Section \ref{sec_properties} we investigate several properties of our two classes. In Section \ref{sec_obstructions} we develop a technique to prove that a subshift does not belong to~$\gen$, and apply it to a number of subshifts. We carry out an analogous development for~$\nar$ in Section \ref{sec_transitions}, using different arguments. In Section \ref{sec_positive}, we identify several subshifts that belong to~$\gen$, among which the one-dimensional SFTs. In Section \ref{sec_classification}, we summarize the classification of the even bicolor tilesets. We end with Section \ref{sec_future} where we suggest future research directions. We also include an appendix, Section \ref{sec_back}, containing classical results that are used in the proofs.

%TODO
%Note that in a sliding block code, each the input region that determines the content of a cell is a neighborhood of the cell with a fixed \emph{diameter}. Our definitions will allow the input region to have more flexible shapes and will require that their sizes are bounded, rather than their diameters. A simple example is when the content of a cell~$p\in\Z^d$ depends on the content of the same input cell~$p$ as well as the cell~$0$ (the origin). The input region of a cell~$p$ is then~$\{0,p\}$, which has size at most~$2$ but unbounded diameter. This situation corresponds to the case when all the cells need to communicate together, by reading the content of the origin, and then make their choices independently.

%We are going to introduce two notions of local generation procedures, which can be informally described as follows:
%\begin{itemize}
%\item Communication is allowed along a finite number of directions (for instance, all the cells in a row share some input information) and all the cells have the same rule,
%\item The input regions of the cells have bounded size, but there is no assumption about their shapes and the associated rules.
%\end{itemize}

%SSSSSS
\section{Local generation}\label{sec_def}

Let us first recall the minimal amount of definitions needed to introduce our notions of local generation.
%ssss
\subsection{Preliminaries}

For~$d\geq 1$,~$(\Z^d,+,0)$ is an abelian group. A~$\Z^d$\textbf{-action} on a set~$E$ is a homomorphism from~$\Z^d$ to the group of bijections from~$E$ to~$E$ with the composition operation. We also say that~$E$ is a~$\Z^d$\textbf{-set}. The result of applying the image of~$p\in\Z^d$ under the homomorphism to~$e\in E$ is written as~$p\cdot e\in E$. A \textbf{continuous~$\Z^d$-action} on a topological space~$X$ is a homomorphism from~$\Z^d$ to the group of homeomorphisms from~$X$ to~$X$ with the composition operation. We also say that~$X$ is a~$\Z^d$\textbf{-space}. A~$\Z^d$-space~$Y$ is a \textbf{factor} of a~$\Z^d$-space~$X$ if there exists a continuous surjective map~$f:X\to Y$ that commutes with the actions:~$p\cdot f(x)=f(p\cdot x)$. $f$ is called a \textbf{factor map}.

If~$A$ is a finite alphabet and~$E$ is countable, then~$A^E$ is endowed with the Cantor topology, which is the product of the discrete topology on~$A$. This makes~$A^E$ a compact metrizable space. The \textbf{shift action} on~$A^{\Z^d}$ is the continuous~$\Z^d$-action defined, for~$p\in\Z^d$ and~$x\in A^{\Z^d}$, by~$p\cdot x=y$ where~$y(q)=x(p+q)$. A~\textbf{$\Z^d$-subshift} is a compact set~$X\subseteq A^{\Z^d}$ which is shift-invariant, i.e.~satisfies~$\sigma^p(X)=X$ for all~$p\in\Z^d$. A~\textbf{$\Z^d$-fullshift} is~$A^{\Z^d}$ for some finite alphabet~$A$.

%ssss
\subsection{The class \texorpdfstring{$\gen$}{C0}}
We start with the strongest notion of local generation. The definition is motivated by the following observations:
\begin{itemize}
\item Certain particular subshifts have a generation procedure that is intuitively local,
\item If two subshifts~$X$ and~$Y$ can be locally generated, then so is their product~$X\times Y$ (which is indeed a subshift), because the content of a cell is simply a pair whose components can be locally generated,
\item If a subshift~$X$ can be locally generated, then so is any factor~$Y$ of~$X$. Indeed, if~$f:X\to Y$ is a factor map then the value of~$f(x)_p$ is determined by the values of~$x$ in finitely many cells around~$p$, which in turn can be locally generated.
\end{itemize}
We then define the class~$\gen$ as the smallest class of subshifts containing the basic subshifts from the first item, and that is closed under finite products and factors. We first need to choose basic subshifts to start with. We propose the following three families and explain why they can be considered as locally generated:
\begin{itemize}
\item \textbf{Fullshifts}~$A^{\Z^d}$: the contents of the cells can be chosen independently of each other.
\item \textbf{Periodic shifts}: given a subgroup~$H\subseteq\Z^d$, let~$X_H\subseteq A^{\Z^d}$ be the subshift containing all the~$H$-periodic configurations, i.e.~$x\in X_H$ if~$x(p+h)=x(p)$ for all~$p\in\Z^d$ and~$h\in H$. First, an element of~$A^{\Z^d/H}$ can be generated by independent choices, and then the content of a cell~$p\in\Z^d$ is determined by the symbol assigned to its equivalence class in~$\Z^d/H$. We will denote~$X_H$ by~$A^{\Z^d/H}$.
\item \textbf{Countable subshifts}: the configurations of a countable subshift can be indexed by~$\N$, so a configuration can be produced by first choosing a natural number~$n$ shared by all the cells, and then all the cells determine their contents in parallel. We do not address the computability or complexity of this procedure (it was shown in \cite{ST13} that countable subshifts can be very uncomputable).
\end{itemize}

\begin{definition}[\ti{The class $\gen$}]
Let~$d\geq 1$. We define~$\gen_d$ as the smallest class of subshifts containing the~$\Z^d$-fullshifts, the periodic~$\Z^d$-shifts and the countable~$\Z^d$-subshifts, and which is closed under finite products and factors. Let~$\gen=\bigsqcup_{d\geq 1}\gen_d$.
\end{definition}

First, subshifts in~$\gen$ can be presented in a simple form.
\begin{proposition}[\ti{Canonical form}]\label{prop_normal_form}
One has~$X\in\gen$ if and only if~$X$ is a factor of a finite product of basic subshifts. Moreover, we can assume that this finite product is
\begin{equation*}
K\times A^{\Z^d}\times A^{\Z^d/H_1}\times \ldots\times A^{\Z^d/H_k}
\end{equation*}
where~$K$ is countable,~$A$ is finite and~$H_1,\ldots,H_k$ are non-trivial subgroups of~$\Z^d$ of ranks at most~$d-1$ (and possibly~$k=0$).
\end{proposition}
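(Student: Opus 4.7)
My plan is to first establish the iff by showing that the class of factors of finite products of basic subshifts coincides with $\gen$, and then to rewrite any such product in the prescribed canonical form. For the iff, let $\C$ be the class of $\Z^d$-subshifts that are factors of a finite product of basic subshifts. The inclusion $\C \subseteq \gen$ is immediate, since $\gen$ contains the basic subshifts and is closed under products and factors. For the converse, I would observe that $\C$ contains all basic subshifts (each is a factor of itself via the identity), is closed under finite products (the product of two factor maps yields a factor map between the product spaces), and is closed under factors (by composition). Minimality of $\gen$ then yields $\gen \subseteq \C$.

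Starting from a representation of $X$ as a factor of $B_1 \times \ldots \times B_n$, I would reorganize the product using four observations. A finite product of countable subshifts is countable, so all countable factors can be merged into a single countable subshift $K$. The identification $A_1^{\Z^d} \times A_2^{\Z^d} = (A_1 \times A_2)^{\Z^d}$ lets all fullshift factors merge into a single one. A periodic shift $B^{\Z^d/H}$ whose subgroup $H$ has full rank $d$ has finite quotient $\Z^d/H$, is therefore finite, and can be absorbed into the countable factor. A periodic shift with $H=\{0\}$ is itself a fullshift and can be absorbed into the fullshift factor. If no countable or fullshift factor was initially present, one is inserted trivially (a singleton $K$, a one-letter alphabet for the fullshift). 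This yields a product
\begin{equation*}
K \times A_0^{\Z^d} \times B_1^{\Z^d/H_1} \times \ldots \times B_k^{\Z^d/H_k}
\end{equation*}
in which each $H_i$ has rank between $1$ and $d-1$, possibly with $k=0$.

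It remains to unify the alphabets. I would choose a finite alphabet $A$ admitting surjections onto $A_0$ and each $B_i$; applied cellwise, these induce factor maps $A^{\Z^d} \to A_0^{\Z^d}$ and $A^{\Z^d/H_i} \to B_i^{\Z^d/H_i}$ that commute with the shift. Their product, combined with the identity on $K$, gives a factor map from the canonical form onto the product of the previous step, which in turn factors onto $X$; composition yields the desired factorization. The argument is essentially bookkeeping and I anticipate no serious obstacle. The only mildly delicate points are to verify that each identification respects the shift action (which it does, the actions being defined componentwise or cellwise) and that the rank condition on the $H_i$ captures exactly the periodic shifts that cannot be absorbed into the countable factor or into the fullshift.
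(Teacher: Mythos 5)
Your proposal is correct and follows essentially the same route as the paper: the first part (showing $\gen$ coincides with the class of factors of finite products of basic subshifts) is the paper's induction on the construction tree, merely rephrased as a minimality argument, and the reorganization step (merging countable factors and fullshifts, absorbing rank-$d$ and trivial subgroups, unifying alphabets via surjections from a common alphabet) matches the paper's bookkeeping. No gaps.
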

\begin{proof}
We first show that every~$X\in\gen$ is a factor of a finite product of basic subshifts, by induction on the finite tree describing the construction of~$X$. If~$X$ is a basic subshift, then there is nothing to do. If~$X=X_0\times X_1$ with~$X_0,X_1\in\gen$ then by induction~$X_0=f_0(P_0)$ and~$X_1=f_1(P_1)$ where~$f_0,f_1$ are factor maps and~$P_0,P_1$ are finite products of basic subshifts. Therefore~$X=f(P_0\times P_1)$ where~$f(x_0,x_1)=(f_0(x_0),f_1(x_1))$, and note that~$f$ is a factor map and~$P_0\times P_1$ is a finite product of basic subshifts. If~$X=f(Y)$ where~$Y\in\gen$ and~$f$ is a factor map, then by induction~$Y=g(P)$ for some finite product~$P$ of basic subshifts and some factor map~$g$, so~$x=f\circ g(P)$ and~$f\circ g$ is a factor map.

Finally, a finite product of countable subshifts is a countable subshift, a finite product of fullshifts is a fullshift, and one can replace all the involved alphabets by the largest one because if~$A\subseteq B$ and~$H\subseteq\Z^d$, then~$A^{\Z^d/H}$ is a factor of~$B^{\Z^d/H}$. If~$H$ is trivial then~$A^{\Z^d/H}$ is a fullshift. If~$H$ has rank~$d$, then~$A^{\Z^d/H}$ is finite hence countable so it can be merged with~$K$.
\end{proof}

Note that the presence of a periodic shift~$A^{\Z^d/H}$ in the canonical form intuitively allows output cells that differ by an element of~$H$ to ``communicate'', in the sense that they share some amount of input information and can make their choices in tandem with each other.

%\begin{definition}
%Let~$d\geq 1$ and~$\Sigma$ be a finite alphabet. A~$\Z^d$-subshift~$X\subseteq \Sigma^{\Z^d}$ belongs to~$\gen$ if there exists a countable compact~$\Z^d$-space~$K$, a countable discrete~$\Z^d$-set~$E$ and a finite alphabet~$A$ such that~$X$ is a factor of~$K\times A^E$.
%\end{definition}

It will be convenient to present a canonical form in a more condensed way, as follows.
\begin{proposition}\label{prop_normal_form_bis}
One has~$X\in\gen$ if and only if~$X$ is a factor of~$K\times A^E$ where~$K$ is a countable subshift,~$A$ is finite and~$E$ is a countable~$\Z^d$-set.
\end{proposition}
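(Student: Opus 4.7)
The plan is to identify a finite product of basic subshifts $A^{\Z^d} \times A^{\Z^d/H_1} \times \cdots \times A^{\Z^d/H_k}$ with $A^E$ for the countable $\Z^d$-set $E = \Z^d \sqcup \Z^d/H_1 \sqcup \cdots \sqcup \Z^d/H_k$, and to go back and forth between the two formulations using Proposition~\ref{prop_normal_form}.

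For the ``only if'' direction, I would apply Proposition~\ref{prop_normal_form} to write $X$ as a factor of $K \times A^{\Z^d} \times A^{\Z^d/H_1} \times \cdots \times A^{\Z^d/H_k}$ with $K$ countable, $A$ finite and the $H_i$ subgroups of $\Z^d$. Setting $E$ to be the disjoint union above, equipped with the natural $\Z^d$-action on each component, one has a canonical $\Z^d$-equivariant homeomorphism $A^E \cong A^{\Z^d} \times \prod_i A^{\Z^d/H_i}$, so $X$ is a factor of $K \times A^E$.

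For the ``if'' direction, suppose $f \colon K \times A^E \to X$ is a factor map; embed $X \subseteq B^{\Z^d}$. I would consider $g = \pi_0 \circ f \colon K \times A^E \to B$, which is continuous into a finite discrete space, hence locally constant. By compactness of $K \times A^E$, the function $g$ depends on only finitely many coordinates of its input: there are finite sets $F_K$ (in the index set of $K$) and $F_E \subseteq E$ such that $g(k,x)$ is determined by $k|_{F_K}$ and $x|_{F_E}$. Equivariance then gives $f(k,x)_p = g\bigl((-p)\cdot(k,x)\bigr)$, so the whole map $f$ only inspects $x$ on the $\Z^d$-invariant set $E' := \Z^d \cdot F_E$.

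The key observation is that $E'$ is a finite union of orbits, one per element of $F_E$, and is therefore equivariantly isomorphic to $\bigsqcup_i \Z^d/H_i$ for finitely many subgroups $H_i \leq \Z^d$ (the stabilizers of representatives in $F_E$). Consequently $A^{E'} \cong \prod_i A^{\Z^d/H_i}$ is a finite product of basic subshifts: a fullshift when $H_i$ is trivial, a periodic shift when $H_i$ has rank in $\{1, \ldots, d-1\}$, and a finite (hence countable) shift when $H_i$ has full rank $d$. Together with $K$, this presents $X$ as a factor of a finite product of basic subshifts, so Proposition~\ref{prop_normal_form} yields $X \in \gen$. The main technical point will be the use of continuity and compactness to reduce from the possibly infinitely-many-orbit set $E$ to the finitely-many-orbit subset $E'$; once this reduction is in hand, everything else is bookkeeping that follows the template of Proposition~\ref{prop_normal_form}.
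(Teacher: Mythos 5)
Your proposal is correct and follows essentially the same route as the paper: the forward direction identifies the product from Proposition~\ref{prop_normal_form} with $A^E$ for the disjoint union $E=\Z^d\sqcup\Z^d/H_1\sqcup\ldots\sqcup\Z^d/H_k$, and the converse uses compactness and continuity to find a finite set of input coordinates determining the output at the origin, then equivariance and the orbit--stabilizer theorem to replace $E$ by the finitely many orbits meeting that set. (Only a cosmetic remark: with the paper's conventions the equivariance identity reads $f(k,x)_p=g(p\cdot(k,x))$ rather than with $-p$, but this does not affect the argument since the orbit $\Z^d\cdot F_E$ is the same either way.)
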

It is understood that~$A^E$ is endowed with the~$\Z^d$-action~$(u\cdot a)_p=a_{u\cdot p}$ for~$a\in A^E$, and that~$K\times A^E$ is endowed with the product~$\Z^d$-action~$u\cdot (k,a)=(u\cdot k,u\cdot a)$.

\begin{proof}
If~$X\in\gen$ then by Proposition \ref{prop_normal_form},~$X$ is a factor of~$K\times A^{\Z^d}\times A^{\Z^d/H_1}\times\ldots\times A^{\Z^d/H_k}=K\times A^E$ where~$E=\Z^d\sqcup \Z^d/H_1\sqcup\ldots\sqcup \Z^d/H_k$ endowed with the natural~$\Z^d$-action.

Conversely, if~$X$ is a factor of~$K\times A^E$ for some countable~$\Z^d$-set~$E$, then by the orbit-stabilizer theorem (Theorem \ref{thm_orbit_stab}), one has~$E=\bigsqcup_{i\in I}\Z^d/H_i$ where~$I$ is countable and~$H_i$ is a subgroup of~$\Z^d$. By compactness of~$K\times A^E$ and continuity of~$f$, there exists a finite set~$R\subseteq E$ such that~$f(k,x)_0$ is determined by the values of~$x$ on~$R$. As~$f$ is a factor map, for any~$p\in\Z^d$,~$f(k,x)_p$ is determined by the values of~$x$ on~$p\cdot R=\{p\cdot e:e\in R\}$. Therefore,~$f(k,x)$ is determined by the values of~$x$ on the obits of elements of~$R$. As~$R$ is finite, there are finitely many such orbits, so~$X$ is a factor of~$K\times A^{E'}$ where~$E'$ is a finite disjoint union of quotients of~$\Z^d$, so~$A^{E'}$ is a finite product of~$A^{\Z^d/H_i}$.
\end{proof}

\begin{example}[\ti{Cellular automaton}]
A cellular automaton is a continuous function~$f:A^{\Z^d}\to A^{\Z^d}$ which commutes with the shift action, so its image is a factor of a fullshift and always belongs to~$\gen$.
\end{example}

%ssss
\subsection{The class \texorpdfstring{$\nar$}{nar}}
We present the second definition of local generation, which is a relaxation of the first one in which we forget the dynamical aspects of subshifts. We first need the following notions in order to formulate the definition of~$\nar$.

Let~$E,F$ be countable sets,~$A,B$ be finite alphabets and~$f:A^E\to B^F$ be continuous. The following notion is a restriction on the continuity of~$f$, in which each output position has a narrow view on the input of~$f$.
\begin{definition}[\ti{Narrow function}]
We say that~$q\in F$ is \textbf{determined} by~$R\subseteq E$ if for all inputs~$x,y\in A^E$ that coincide on~$R$,~$f(x)$ and~$f(y)$ coincide at position~$q$.

Let~$r\in\N$. The function~$f:A^E\to B^F$ is~$r$\textbf{-narrow} if each~$q\in F$ is determined by a region of size at most~$r$. We say that~$f$ is \textbf{narrow} if~$f$ is~$r$-narrow for some~$r$.
\end{definition}

Note that a function is~$0$-narrow if and only if it is constant. The identity~$f:A^E\to A^E$ is~$1$-narrow. Every factor map is narrow. The composition of two narrow functions is narrow. 
%It should be contrasted with the following observation: for any subshift, more generally any compact subset~$X$ of a Cantor space, there exists a continuous surjective function~$f:\{0,1\}^\N\to X$. Such a function induces a procedure that runs in parallel for all the cells, where each cell reads a finite number of input bits, but in which that finite number of bits is \emph{not bounded} across the cells.%Every continuous function~$f:A^{\Z^d}\to B^{\Z^d}$ that commutes with the shift action is narrow, because by Curtis-Hedlund-Lyndon theorem \cite[Theorem 6.2.9]{LM95},~$f$ is a sliding block code, i.e.~$\W_f(q)$ is contained in the neighborhood~$\Nei(q,r)=\{p\in\Z^d:d(p,q)\leq r\}$ for some~$r$ (the metric~$d$ is defined by~$d(p,q)=\max_i |p_i-q_i|$ where~$p_i,q_i$ are the coordinates of~$p,q$ respectively).

\begin{example}[\ti{Non-uniform cellular automaton}]\label{ex_non-uniform_CA}
A non-uniform cellular automaton \cite{DFP12} is a continuous function~$f:A^{\Z^d}\to A^{\Z^d}$ such that each~$q\in\Z^d$ is determined by~$\Nei(q,r)=\{p\in\Z^d:d(p,q)\leq r\}$ for some~$r\in\N$ (the metric~$d$ is defined by~$d(p,q)=\max_i |p_i-q_i|$ where~$p_i,q_i$ are the coordinates of~$p,q$ respectively). The size of~$\Nei(q,r)$ is constant so~$f$ is narrow.
\end{example}

We come to the second notion of local generation. Let~$\Sigma$ be finite and~$F$ be countable.

\begin{definition}[\ti{The class $\nar$}]
A compact set~$X\subseteq\Sigma^F$ belongs to~$\nar$ if it is a countable union of images of narrow functions, i.e.~$X=\bigcup_{n\in\N} X_n$ where~$X_n=\im{f_n}$ and~$f_n:A_n^\N\to X$ is~$r_n$-narrow.
\end{definition}
Note that~$\N$ can be replaced by any countable set. We will mostly consider the case when~$F=\Z^d$ and~$X$ is a~$\Z^d$-subshift, but the definition makes sense more generally.

Finally, this notion of local generation is indeed a relaxation of the previous one.
\begin{proposition}\label{prop_inclusion}
One has~$\gen\subseteq \nar$.
\end{proposition}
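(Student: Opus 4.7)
The plan is to apply the canonical form of Proposition \ref{prop_normal_form_bis}: any $X \in \gen$ arises as a factor $X = f(K \times A^E)$ for a countable subshift $K$, a finite alphabet $A$, a countable $\Z^d$-set $E$, and a factor map $f$. Since $E$ is countable, a fixed bijection $E \leftrightarrow \N$ identifies $A^E$ with $A^\N$, so any narrow function out of $A^E$ qualifies for the definition of $\nar$.

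I would then enumerate $K = \{k_n : n \in \N\}$ (with repetitions if $K$ is finite) and, for each $n$, define
\begin{equation*}
\psi_n : A^E \to X, \qquad \psi_n(a) = f(k_n, a).
\end{equation*}
Since $f$ is surjective and every $k \in K$ equals some $k_n$, the images $\im{\psi_n}$ cover $X$, giving $X = \bigcup_{n \in \N} \im{\psi_n}$. All that remains is to check each $\psi_n$ is narrow; the definition of $\nar$ allows the bounds $r_n$ to depend on $n$, so I only need narrowness of each $\psi_n$ individually.

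For this, I would reuse the compactness/continuity argument already exploited in the proof of Proposition \ref{prop_normal_form_bis}: there is a finite set $R \subseteq E$ such that $f(k, a)_0$ is determined by $k$ together with $a|_R$. Shift-commutation of $f$ then propagates this to every coordinate $p \in \Z^d$: the value $f(k, a)_p$ is determined by $p \cdot k$ and by $a|_{p \cdot R}$. Freezing $k = k_n$ eliminates all dependence on the first argument, so $\psi_n(a)_p$ is determined by $a|_{p \cdot R}$, a set of size $|R|$ independent of $p$. Hence $\psi_n$ is $|R|$-narrow, and $X \in \nar$.

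The argument is essentially direct; the only point that needs a little care is the uniformity of the window size $|R|$ over the output position $p$, which is guaranteed by the commutation of $f$ with the $\Z^d$-action. Nothing else is really an obstacle.
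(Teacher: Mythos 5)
Your proof is correct and follows the same route as the paper: apply Proposition \ref{prop_normal_form_bis}, freeze the countable coordinate $k$ to obtain countably many maps $a\mapsto f(k,a)$, and observe that each is narrow. The paper simply compresses your compactness-plus-shift-commutation step into the earlier remark that every factor map is narrow.
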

\begin{proof}
Let~$X\in\gen$, let~$f:K\times A^E\to X$ be a factor map given by Proposition \ref{prop_normal_form_bis}. For each~$k\in K$, let~$f_k:A^E\to X$ send~$a$ to~$f(k,a)$. As~$f$ is narrow, each~$f_k$ is narrow so~$X=\bigcup_{k\in K}\im{f_k}$ belongs to~$\nar$, as~$K$ is countable.
\end{proof}

The following notion will be useful to analyze narrow functions.
\begin{proposition}[Input window]
Let~$f:A^E\to B^F$ be continuous. For each~$q\in F$, there is a minimal subset of~$E$ that determines~$q$. It is denoted by~$\W_f(q)$ and is called the \textbf{input window} of~$q$.
\end{proposition}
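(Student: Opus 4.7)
The plan is to establish existence of a minimum by combining two ingredients: a finiteness fact that comes from compactness/continuity, and a closure-under-intersection lemma for the family of determining sets.

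First, I would show that some finite $R_0\subseteq E$ determines $q$. For each $b\in B$, the set $U_b=\{x\in A^E : f(x)_q=b\}$ is clopen in $A^E$ because $f$ is continuous and $\{y\in B^F : y_q=b\}$ is clopen. The family $(U_b)_{b\in B}$ is a finite clopen partition of $A^E$, so each $U_b$ is a finite union of basic cylinders of the form $[w]$ with $w\in A^{R_{b,i}}$ and $R_{b,i}\subseteq E$ finite. Taking $R_0$ to be the (finite) union of all these $R_{b,i}$, the value $f(x)_q$ depends only on $x\res{R_0}$.

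The key step is the following lemma: if $R_1$ and $R_2$ both determine $q$, then $R_1\cap R_2$ determines $q$. Given $x,y\in A^E$ agreeing on $R_1\cap R_2$, define an interpolant $z\in A^E$ by $z_e=x_e$ for $e\in R_1$ and $z_e=y_e$ for $e\notin R_1$; on $R_1\cap R_2$ both prescriptions agree because $x$ and $y$ do. Then $z$ agrees with $x$ on $R_1$, so $f(z)_q=f(x)_q$, and $z$ agrees with $y$ on $R_2$ (on $R_1\cap R_2$ by construction, on $R_2\setminus R_1$ by definition), so $f(z)_q=f(y)_q$. Hence $f(x)_q=f(y)_q$.

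Finally, let $\mathcal{R}$ be the family of all subsets of $E$ that determine $q$, and define $\W_f(q)=\bigcap_{R\in\mathcal{R}}R$. Since $\W_f(q)\subseteq R_0$ it is finite. For each $e\in R_0\setminus\W_f(q)$ there exists $R_e\in\mathcal{R}$ with $e\notin R_e$, and then $\W_f(q)=R_0\cap\bigcap_{e\in R_0\setminus\W_f(q)}R_e$ is a finite intersection of determining sets, hence determining by iterating the pairwise lemma. By construction $\W_f(q)$ is contained in every determining set, so it is the minimum.

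The main obstacle is the intersection lemma; the rest is routine. The interpolation trick above is essentially the only content, and it relies crucially on having an alphabet-valued function of cell values, so that cell contents from different configurations can be freely glued.
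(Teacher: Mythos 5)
Your proof is correct; the only difference from the paper's is in how the gluing argument is packaged. The paper takes $x,y$ agreeing on the full intersection $\W_f(q)$, fixes one finite determining set $R$ obtained from continuity, and walks from $x$ to $y$ by changing one coordinate of $R\setminus\W_f(q)$ at a time, each step being harmless because the modified coordinate avoids some determining set $R_i$. You instead isolate a cleaner key lemma --- determining sets are closed under pairwise, hence finite, intersection --- proved by a single two-block interpolant $z$, and then reduce to a finite intersection by writing $\W_f(q)=R_0\cap\bigcap_{e\in R_0\setminus\W_f(q)}R_e$ (an identity you correctly justify: any element of the right-hand side lying outside $\W_f(q)$ would have to avoid its own witness $R_e$). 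Both arguments exploit exactly the same freedom to mix coordinates of different configurations in a product space; your version is more modular and records the reusable fact that the determining sets form a filter base, while the paper's coordinate-by-coordinate chain gets to the conclusion slightly more directly without naming an intermediate lemma. Your compactness argument for the existence of a finite determining set (finite clopen partition into finitely many cylinders) is also a correct expansion of what the paper summarizes as ``as $f$ is continuous, there is a finite set $R$ that determines $q$.''
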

\begin{proof}
Let~$\W_f(q)$ be the intersection of all the sets that determine~$q$. We show that~$\W_f(q)$ determines~$q$. Let~$x,y\in A^E$ coincide on~$\W_f(q)$. As~$f$ is continuous, there is a finite set~$R$ that determines~$q$, so~$\W_f(q)\subseteq R$. Let~$R\setminus \W_f(q)=\{r_1,\ldots,r_n\}$. For each~$i\leq n$, there exists~$R_i$ that determines~$q$, and such that~$r_i\notin R_i$. We inductively define~$x_0,x_1,\ldots,x_n$ as follows. Let~$x_0=x$ and for~$i<n$, let~$x_{i+1}$ coincide with~$x_i$ everywhere except possibly at~$r_i$, where it takes the same value as~$y$. As~$x_{i+1}$ and~$x_i$ coincide on~$R_i$ which determines~$q$,~$f(x_{i+1})$ and~$f(x_i)$ coincide at~$q$. As~$x_n$ coincides with~$y$ on~$R$ which determines~$q$,~$f(x_n)$ and~$f(y)$ coincide at~$q$. As a result,~$f(x)$ and~$f(y)$ coincide at~$q$, so~$\W_f(q)$ indeed determines~$q$.
\end{proof}
One should think of each output position~$q\in F$ as observing the input of~$f$ through its input window~$\W_f(q)$, and then choosing its content according to what it sees. A function~$f$ is~$r$-narrow if~$|\W_f(q)|\leq r$ for all~$q$.
%sss
\subsubsection{Relevance of the definition} We now discuss why a compact set in~$\nar$ always admits a generation procedure which is in a sense local.

Let~$X\in \nar$, i.e.~$X=\bigcup_{n\in\N} X_n$ where~$X_n$ is the image of some~$r_n$-narrow function~$f_n:A_n^\N\to \Sigma^{\Z^d}$. The procedure producing a configuration~$x\in X$ runs as follows:
\begin{itemize}
\item Choose a number~$n\in\N$ and an arbitrary sequence of symbols~$a\in A^\N_n$,
\item For each cell~$p\in\Z^d$, read the values of~$a$ in at most~$r_n$ positions, determine the value of $f_n(a)$ at position~$p$, and fill~$p$ with that value.
\end{itemize}
Therefore, each cell has its own procedure, which takes a bounded amount of input information and outputs its content, and all the cells run their procedures in parallel with no explicit communication. Some bounded amount of communication is implicitly made possible as the cells share limited pieces of input information, via the number~$n$ and the overlaps between the windows of the cells.

Again we do not address the computability or complexity of computing the finite input window of each position and applying the rule that determines the output, and our results only focus on the ``combinatorial'' complexity of subshifts. In practice, the class~$\nar$ will mainly be used to prove that particular subshifts do not belong to that class, which a fortiori prevents the existence of efficiently computable windows and rules. Conversely, for every particular subshift~$X\in\nar$ considered in this article, the windows and rules will be explicit and efficiently computable.

%SSSSSS
\section{Symbolic dynamics}\label{sec_dynamics}

We recall the classical vocabulary associated to subshifts.

%sss
\subsection{Definitions}
Let~$d\geq 1$ and~$\Sigma$ a finite alphabet.

We endow~$\Z^d$ with the metric~$d(p,q)=\max_i|p_i-q_i|$, where~$p=(p_1,\ldots,p_d)$ and~$q=(q_1,\ldots,q_d)$. For~$F\subseteq\Z^d$,~$\diam(F)=\max_{p,q\in F}d(p,q)$, which is~$\infty$ if~$F$ is infinite. For~$F,G\subseteq\Z^d$,~$d(F,G)=\min_{p\in F,q\in G}d(p,q)$. We will often consider the \textbf{cubes}~$S_n=[0,n-1]^d$ and~$Q_n=[-n,n]^d$, for~$n\in\N$.

Let~$F\subseteq\Z^d$ be any set. An~$F$\textbf{-pattern} is an element~$\pi\in \Sigma^F$. A \textbf{pattern} is an~$F$-pattern for some~$F$, which is called the \textbf{domain} of~$\pi$ and is denoted by~$\dom(\pi)$. A pattern~$\pi'$ \textbf{extends} a pattern~$\pi$ if~$\dom(\pi)\subseteq\dom(\pi')$ and~$\pi'(p)=\pi(p)$ for all~$p\in\dom(\pi)$. A \textbf{finite pattern} is an~$F$-pattern for some finite~$F\subseteq\Z^d$. If~$\pi,\pi'$ are patterns with disjoint domains~$F,F'$ respectively, then~$\pi\cup\pi'$ is the pattern with domain~$F\cup F'$ extending~$\pi$ and~$\pi'$. A \textbf{configuration} is an element~$x\in\Sigma^{\Z^d}$.

If~$\pi$ is a pattern, then~$[\pi]$ is the set of configurations extending~$\pi$. The set~$\Sigma^{\Z^d}$ is endowed with the topology generated by the sets~$[\pi]$ where~$\pi$ is a finite pattern. As already mentioned, the space~$\Sigma^{\Z^d}$ is endowed with the \textbf{shift action}, which is the continuous~$\Z^d$-action~$p\cdot x=y$ where~$y(q)=x(p+q)$. We will often write~$\sigma^p(x):=p\cdot x$. The shift also acts on patterns: if~$\pi$ is an~$F$-pattern, then~$\sigma^p(\pi)$ is the~$F'$-pattern~$\pi'$ where~$F'=F-p$ and~$\pi'(q)=\pi(p+q)$ for~$q\in F'$. If~$p\in\Z^d$ and~$x$ is a configuration, then we say that a pattern~$\pi$ \textbf{appears at position~$p$ in~$x$} if~$\sigma^p(x)$ extends~$\pi$.

A \textbf{subshift} is a closed subset~$X$ of~$\Sigma^{\Z^d}$ which is \textbf{shift-invariant}, i.e.~satisfies~$\sigma^p(X)=X$ for all~$p\in\Z^d$. We fix a subshift~$X\subseteq \Sigma^{\Z^d}$. All the subsequent notions are relative to~$X$, but we do not mention~$X$ which will always be clear from the context. A \textbf{valid configuration} is an element~$x\in X$. A \textbf{valid pattern} is a pattern appearing in some valid configuration, at any position or equivalently at the origin. Two disjoint regions~$F,G\subseteq\Z^d$ are \textbf{independent} if for every valid~$F$-pattern~$\pi$ and every valid~$G$-pattern~$\pi'$,~$\pi\cup \pi'$ is valid.

A \textbf{subshift of finite type (SFT)} is the subshift induced by a finite set~$P$ of finite patterns, called forbidden patterns, and defined as the set of configurations in which no~$\pi\in P$ appears. Let~$C$ be a finite set of colors. A \textbf{Wang tile} over~$C$ associates to each edge of the unit square a color in~$C$. A \textbf{Wang tileset} over~$C$ is a set~$T$ of Wang tiles over~$C$. It induces a~$\Z^2$-subshift of finite type~$X_T\subseteq T^{\Z^2}$ which is the set of configurations in which neighbor cells have the same color on their common edge.

%sss
\subsection{Dynamical properties}

We recall classical dynamical properties of subshifts, that can be found in \cite{Furstenberg67} or \cite{Vries13}.
\begin{definition}
A subshift~$X\subseteq\Sigma^{\Z^d}$ is \textbf{transitive} if for every pair of valid finite patterns~$\pi,\pi'$, there exists~$p\in\Z^d$ such that~$\pi\cup \sigma^p(\pi')$ is a valid pattern; in other words,~$\pi$ and~$\pi'$ appear in a common configuration. 

A subshift~$X\subseteq\Sigma^{\Z^d}$ is \textbf{strongly irreducible} if there exists~$n\in\N$ such that all regions~$F,G\subseteq \Z^d$ satisfying~$d(F,G)\geq n$ are independent.

A subshift~$X\subseteq\Sigma^{\Z^d}$ is \textbf{mixing} if for every~$m\in\N$ there exists~$n\in\N$ such that all regions~$F,G\subseteq \Z^d$ satisfying~$\diam(F)\leq m,\diam(G)\leq m$ and~$d(F,G)\geq n$ are independent.

A subshift~$X\subseteq\Sigma^{\Z^d}$ is \textbf{weakly mixing} if~$X\times X$ is transitive.
\end{definition}

Transitive systems are called ergodic in \cite{Furstenberg67} and are also referred as irreducible in the literature. In the definitions of transitivity, weak mixing and mixing, one can equivalently replace finite regions by cubes~$Q_m$. However, the version of strong irreducibility restricted to cubes is strictly weaker in general and is called block-gluing. One has the following chain of implications:
\begin{center}
strongly irreducible $\implies$ mixing $\implies$ weakly mixing $\implies$ transitive.
\end{center}
Strong irreducibility is a simple necessary condition for a subshift to be a factor of a fullshift.

The notion of weak mixing will be important in this article. Its meaning might not be clear at first, but the characterization given condition (3) in the next proposition is much more intuitive, easier to work with and is more apparently a relaxation of the mixing property. A part of this result was proved by Furstenberg \cite{Furstenberg67} in the context of continuous~$\Z$-actions.
\begin{proposition}[\ti{Characterizations of weak mixing}]
For a subshift~$X\subseteq \Sigma^{\Z^d}$, the following conditions are equivalent:
\begin{enumerate}
\item $X$ is weakly mixing,
\item For each~$k\geq 1$,~$X^k$ is transitive,
\item For every~$n\in\N$, there exists~$p\in\Z^d$ such that~$Q_n$ and~$p+Q_n$ are independent,
\item For every~$n\in\N$ and every valid~$Q_n$-pattern~$\pi$, there exists~$p\in\Z^d$ such that for every~$Q_n$-pattern~$\pi'$, $\pi\cup\sigma^p(\pi')$ is valid.%there exists~$x\in X$ such that~$\pi$ and~$\pi'$ appear in~$x$ at positions~$0$ and~$p$ respectively.
\end{enumerate}
\end{proposition}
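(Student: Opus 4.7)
The plan is to prove the equivalences via the six implications $(2)\Rightarrow(1)$, $(1)\Rightarrow(2)$, $(2)\Rightarrow(3)$, $(3)\Rightarrow(2)$, $(3)\Rightarrow(4)$ and $(4)\Rightarrow(3)$.

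Four of these are essentially routine. The implication $(2)\Rightarrow(1)$ is the case $k=2$. The implication $(3)\Rightarrow(4)$ is immediate since the $p$ provided by $(3)$ works for every $\pi$. For $(3)\Rightarrow(2)$, given valid finite patterns of $X^k$ I place each coordinate inside $Q_n$, extend it to a valid $Q_n$-pattern, and use independence from $(3)$ coordinate-wise to conclude transitivity. For $(2)\Rightarrow(3)$, I enumerate the valid $Q_n$-patterns $\pi_1,\ldots,\pi_N$ and consider in $X^{N^2}$ the two tuples $\rho = (\pi_i)_{(i,j)}$ and $\rho' = (\pi_j)_{(i,j)}$ indexed by pairs; transitivity of $X^{N^2}$ yields a common $p$ with $\pi_i \cup \sigma^p(\pi_j)$ valid for every pair, which is exactly independence of $Q_n$ and $-p + Q_n$.

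The implication $(1)\Rightarrow(2)$ is Furstenberg's theorem, which I would invoke as a classical result from topological dynamics: transitivity of $X^2$ implies transitivity of every finite power $X^k$, by an induction on $k$ whose key ingredient is that the return-time sets $N(U,V) = \{p : \sigma^p U \cap V \neq \emptyset\}$ have non-empty finite intersections in a weakly mixing system. This is the deep step, since $(1)$ is inherently about two factors while $(2)$, $(3)$ and $(4)$ concern arbitrarily many patterns at once.

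The remaining implication $(4)\Rightarrow(3)$ is the main combinatorial obstacle: $(4)$ gives a translation $p$ depending on $\pi$ whereas $(3)$ demands a $p$ uniform in $\pi$ and $\pi'$. My plan is to force uniformity by encoding every valid $Q_n$-pattern inside a single large valid pattern before applying $(4)$ one last time. Iterating $(4)$, I start from $\pi_1$, extend by $\pi_2$ to get a valid pattern on $Q_n \cup (Q_n - q_1)$, treat it as the new input pattern at an appropriate scale and extend it by $\pi_3$, and so on. After $N-1$ steps I have a valid $Q_m$-pattern $\Pi$ in which each $\pi_i$ appears at a distinguished position $a_i$. A final application of $(4)$ to $\Pi$ at scale $m$ gives $p$ such that $\Pi \cup \sigma^p(\pi^*)$ is valid for every valid $Q_m$-pattern $\pi^*$. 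For each pair $(i,j)$ shift-invariance supplies a $\pi^*_{ij}$ with $\pi_j$ at position $a_i$; then in $\Pi \cup \sigma^p(\pi^*_{ij})$ the pattern $\pi_i$ sits at $a_i$ (coming from $\Pi$) while $\pi_j$ sits at $a_i - p$ (coming from $\sigma^p \pi^*_{ij}$), so shifting by $-a_i$ exhibits $\pi_i \cup \sigma^p(\pi_j)$ as valid. Since $p$ is independent of $(i,j)$, this establishes $(3)$ after replacing $p$ by $-p$.
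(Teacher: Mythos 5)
Your proof is correct, but it closes the equivalence by a genuinely different route. The implications $(2)\Rightarrow(3)$ (the $m^2$-fold product trick with the two tuples listing all pairs) and $(3)\Rightarrow(4)$ coincide with the paper's. The two real differences are these. First, for $(1)\Rightarrow(2)$ the paper does not cite Furstenberg but reproves the statement by a direct induction on $k$: transitivity of $X\times X$ is used to build, step by step, positions $p_1,\ldots,p_k$ making the two patterns $\mu_k=\bigcup_{j\le k}\sigma^{p_j}(\pi_j)$ and $\mu'_k=\bigcup_{j\le k}\sigma^{p_j}(\pi'_j)$ simultaneously valid, and one final application of transitivity of $X$ glues $\mu_k$ to $\sigma^p(\mu'_k)$; this is exactly the combinatorial shadow of the return-set argument you sketch. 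Citing the classical theorem is legitimate, but note that the reference is stated for $\Z$-actions (the paper itself remarks that only ``a part'' of the result is in Furstenberg), so you should at least say that the filter-base argument transfers verbatim to $\Z^d$. Second, instead of your direct $(4)\Rightarrow(3)$, the paper proves $(4)\Rightarrow(1)$ with only two applications of $(4)$: apply $(4)$ to $\pi_1$ to glue in $\pi_2$, extend the result to a valid $Q_m$-pattern $\pi$, apply $(4)$ to $\pi$ to obtain $q$, and feed in valid $Q_m$-extensions of $\pi'_1$ and of $\sigma^p(\pi'_2)$; this closes the cycle $1\Rightarrow2\Rightarrow3\Rightarrow4\Rightarrow1$. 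Your $(4)\Rightarrow(3)$ --- concatenating all $N$ valid $Q_n$-patterns into a single valid $Q_m$-pattern $\Pi$ by $N-1$ applications of $(4)$, then applying $(4)$ once more to $\Pi$ and reading off each pair $(\pi_i,\pi_j)$ at the marked position $a_i$ --- is sound: the patterns $\pi^*_{ij}$ exist by shift-invariance, and $a_i+Q_n\subseteq Q_m$ guarantees that $\pi_i$ and $\pi_j$ land at $a_i$ and $a_i-p$ as you claim, so the single $p$ witnesses independence of $Q_n$ and $Q_n-p$. It has the virtue of deducing $(3)$ from $(4)$ without passing through the Furstenberg step, at the price of a longer construction and the bookkeeping of the positions $a_i$. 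One cosmetic point common to both write-ups: in condition $(4)$, $\pi'$ must of course range over \emph{valid} $Q_n$-patterns, as you implicitly assume.
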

The result still holds if one replaces the quantification over~$n$ by a quantification over the finite regions~$F\subseteq\Z^d$ (and the~$Q_n$-patterns by~$F$-patterns). \begin{proof}
$1.\Rightarrow 2.$ Let~$n\in\N$ and let~$\pi_1,\ldots,\pi_k$ and~$\pi'_1,\ldots,\pi'_k$ be valid~$Q_n$-patterns. Our goal is to prove the existence of~$p\in\Z^d$ such that each~$\pi_i\cup\sigma^p(\pi'_i)$ is valid. By induction on~$i\leq k$, we show the existence of~$p_1,\ldots,p_i$ such that~$\mu_i=\bigcup_{j\leq i}\sigma^{p_j}(\pi_j)$ and~$\mu'_j=\bigcup_{j\leq i}\sigma^{p_j}(\pi'_j)$ are both valid. Start with~$p_1=0$ and note that~$\pi_1$ and~$\pi'_1$ are valid by assumption. For~$i<k$, assume that~$p_1,\ldots,p_{i}$ have been defined. As~$X\times X$ is transitive, there exists~$p_{i+1}\in\Z^d$ such that~$\mu_i\cup\sigma^{p_{i+1}}(\pi_{i+1})$ and~$\mu'_i\cup\sigma^{p_{i+1}}(\pi'_{i+1})$ are valid, which proves the induction step. Finally, as~$X$ is transitive, there exists~$p$ such that~$\mu_k\cup \sigma^p(\mu'_k)$ is valid. Observe that this pattern contains each~$\sigma^{p_i}(\pi_i\cup\sigma^p(\pi'_i))$, so each~$p_i\cup\sigma^p(\pi'_i)$ is valid.

$2.\Rightarrow 3.$ Let~$n\in\N$. Let~$\{\pi_1,\ldots,\pi_m\}$ be the set of valid~$Q_n$-patterns and~$k=m^2$. The two following patterns are valid in~$X^k$:
\begin{align*}
\pi&=(\pi_1,\ldots,\pi_1,\pi_2,\ldots,\pi_2,\ldots,\pi_m,\ldots,\pi_m)\\
\pi'&=(\pi_1,\ldots,\pi_m,\pi_1,\ldots,\pi_m,\ldots,\pi_1,\ldots,\pi_m).
\end{align*}
As~$X^k$ is transitive, there exists~$p\in\Z^d$ such that~$\pi\cup\sigma^p(\pi')$ is valid in~$X^k$. It implies that for all~$i,j\leq m$,~$\pi_i\cup\sigma^p(\pi_j)$ is valid in~$X$. As a result,~$Q_n$ and~$p+Q_n$ are independent.

$3.\Rightarrow 4.$ Observe that condition~$3.$ is obtained from condition~$4.$ by exchanging the quantifiers over~$p$ and~$\pi$, so~$3.$ is indeed stronger than~$4.$ Let us prove it more precisely. If~$n\in\N$ and~$\pi$ is a valid~$Q_n$-pattern, then let~$p\in\Z^d$ be given by~$3.$ applied to~$n$. As~$Q_n$ and~$p+Q_n$ are independent, for every~$Q_n$-pattern~$\pi'$,~$\pi\cup\sigma^p(\pi')$ is valid.%If~$F,F'\subseteq\Z^d$ are finite and~$\pi$ is an~$F$-pattern, then let~$p\in\Z^d$ be given by~$2.$ applied to~$F,F'$. As~$F$ and~$p+F'$ are independent, for every~$F'$-pattern~$\pi'$ there exists~$x\in X$ in which~$\pi$ and~$\pi'$ appear at positions~$0$ and~$p$ respectively.

$4.\Rightarrow 1.$ Let~$n\in\N$ and~$\pi_1,\pi'_1,\pi_2,\pi'_2$ be valid~$Q_n$-patterns. Our goal is to find~$q\in\Z^d$ such that both~$\pi_1\cup \sigma^q(\pi'_1)$ and~$\pi_2\cup\sigma^q(\pi'_2)$ are valid patterns. Applying~$4.$ to~$n$ and~$\pi_1$, there exists~$p\in\Z^d$ such that~$\pi_1\cup\sigma^p(\pi_2)$ is valid. % and~$x\in X$ in which~$\pi_1$ and~$\pi_2$ appear at positions~$0$ and~$p$ respectively.
Let~$m\in\N$ be such that~$Q_m$ contains~$Q_n\cup (p+Q_n)$ and~$\pi$ be a valid~$Q_m$-pattern extending~$\pi_1\cup\sigma^p(\pi_2)$. %the~$Q_m$-pattern in~$x$.
Applying again~$4.$ to~$m$ and~$\pi$ gives some~$q\in\Z^d$. Let~$\pi''_1$ and~$\pi''_2$ be valid extensions on~$Q_m$ of~$\pi'_1$ and~$\sigma^p(\pi'_2)$ respectively. By choice of~$q$,~$\pi\cup\sigma^q(\pi''_1)$ and~$\pi\cup\sigma^q(\pi''_2)$ are valid. %there exists~$x_1\in X$ in which~$\pi$ and~$\pi''_1$ appear at positions~$0$ and~$q$ respectively, and there exists~$x_2\in X$ in which~$\pi$ and~$\pi''_2$ appear at positions~$0$ and~$q$ respectively. 
Finally observe that~$\pi\cup\sigma^q(\pi''_1)$ extends~$\pi_1\cup\sigma^q(\pi'_1)$ and~$\pi\cup\sigma^q(\pi''_2)$ extends~$\sigma^p(\pi_2\cup \sigma^q(\pi'_2))$. As a result,~$\pi_1\cup\sigma^q(\pi'_1)$ and~$\pi_2\cup\sigma^q(\pi'_2)$ are valid.
%Observe that~$\pi_1$ and~$\pi'_1$ appear in~$x_1$ at positions~$0$ and~$q$ respectively, and~$\pi_2$ and~$\pi'_2$ appear in~$\Sigma^p(x_2)$ at positions~$0$ and~$q$ respectively.
\end{proof}

In order to prove mixing properties, it is often convenient to consider regions having certain prescribed shapes.
\begin{proposition}
Let~$\F$ be a family of finite subsets of~$\Z^d$ such that every~$Q_m$ is contained, up to translation, in some element of~$\F$.

A~$\Z^d$-subshift is mixing if and only if for every~$F\in\F$ there exists~$n\in\N$ such that for every~$p\in\Z^d$ satisfying~$|p|\geq n$,~$F$ and~$p+F$ are independent.
\end{proposition}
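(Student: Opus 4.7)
The plan is to prove both directions directly from the definitions of mixing and independence. The key auxiliary fact I will use is that any disjoint pair of subsets of an independent pair is itself independent (valid patterns on smaller regions extend to valid patterns on the larger ones by restricting valid configurations), together with the shift-invariance of independence.

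For the forward direction, I fix $F\in\F$ and set $m=\diam(F)$. The mixing property applied at scale $m$ gives some $n_0$, and for any $p\in\Z^d$ with $|p|\geq n_0+m$ one has $d(F,p+F)\geq |p|-\diam(F)\geq n_0$ by the triangle inequality, so $F$ and $p+F$ are independent by mixing. Hence $n:=n_0+m$ witnesses the condition for $F$.

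For the backward direction, I fix $m\in\N$, use the hypothesis on $\F$ to pick $F_m\in\F$ and $s_m\in\Z^d$ with $Q_m+s_m\subseteq F_m$, and let $n_m$ be given by the assumption applied to $F_m$. I claim $n_m$ witnesses mixing at scale $m$. Given disjoint regions $A,B$ of diameter at most $m$ with $d(A,B)\geq n_m$, I choose $a\in A$ and $b\in B$ realizing that minimum distance. Then $A\subseteq a+Q_m\subseteq (a-s_m)+F_m$ and similarly $B\subseteq (b-s_m)+F_m$, and these two translates of $F_m$ differ by the vector $b-a$, whose norm is exactly $d(A,B)\geq n_m$; by shift-invariance and the assumption, $(a-s_m)+F_m$ and $(b-s_m)+F_m$ are independent, so $A$ and $B$ are independent as disjoint subsets. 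The step that requires the most care is this coordinated choice of translations: one must position $A$ and $B$ inside copies of $F_m$ whose mutual offset is precisely $b-a$, so that the hypothesis on $F_m$ applies with the correct shift. The remainder of the argument is bookkeeping.
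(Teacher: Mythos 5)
Your proof is correct and follows essentially the same route as the paper's: the forward direction uses $d(F,p+F)\geq |p|-\diam(F)$ exactly as in the paper, and the backward direction covers the two small regions by suitably translated copies of an element of $\F$ whose offset equals the distance between the chosen base points, then passes to subsets. The only cosmetic difference is that you pick the distance-realizing pair $a,b$ while the paper picks arbitrary points (both work since $d(A,B)$ is a minimum).
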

\begin{proof}
Assume the property, let~$m\in\N$ and let~$H\in\F$ contain~$h+Q_m$ for some~$h\in \Z^d$. Applying the property to~$H$ yields some~$n$. Let~$F,G$ have diameters at most~$m$ and satisfy~$d(F,G)\geq n$. Choose any~$p\in F,q\in G$ and note that~$d(p,q)\geq n$. One has~$F\subseteq p+Q_m\subseteq q-h+H$ and~$G\subseteq q+Q_m\subseteq q-h+H$. As~$|(p-h)-(q-h)|=d(p,q)\geq n$,~$p-h+H$ and~$q-h+H$ are independent, so~$F$ and~$G$ are independent as well.

Conversely, assume that~$X$ is mixing, let~$F\in \F$ and let~$m=\diam(F)$. The mixing property yields some~$n$. Let~$n'=n+m$ and~$|p|\geq n'$. The regions~$F$ and~$p+F$ have diameter~$m$ and~$d(F,p+F)\geq |p|-\diam(F)\geq n$, so~$F$ and~$p+F$ are independent.
\end{proof}

%ssss
\subsection{Weak factors and weak conjugacies}
Usually, two~$\Z^d$-subshifts~$X$ and~$Y$ are considered equivalent when they are \emph{conjugate}, which means that there exists a homeomorphism~$f:X\to Y$ that commutes with the shift actions:~$\sigma^u\circ f=f\circ \sigma^u$ for all~$u\in\Z^d$.

We will consider more flexible notions of factor and conjugacy, which allow one to move the group elements by a homomorphism, and are the natural notions of homomorphisms and isomorphisms in the category of continuous group actions, when the group is not fixed.

The main interest of these notions is that the classes~$\gen$ and~$\nar$ are preserved by taking weak factors, as we will show in the next section.

\begin{definition}\label{def_equivalence}
Let~$X$ be a~$\Z^d$-subshift and~$Y$ a~$\Z^e$-subshift. We say that~$Y$ is a \textbf{weak factor} of~$X$ if there exists a continuous surjective function~$f:X\to Y$ and a homomorphism~$\varphi:\Z^e\to\Z^d$ such that
\begin{equation*}
\sigma^p\circ f=f\circ \sigma^{\varphi(p)}
\end{equation*}
for all~$p\in\Z^e$.

Two subshifts~$X,Y$ are \textbf{weakly conjugate} if moreover~$f$ is a homeomorphism and~$\varphi$ an isomorphism (which is possible only when~$d=e$).
\end{definition}

%\begin{remark}
%One could be tempted to consider an alternative definition of a weak factor, requiring a homomorphism~$\psi:\Z^d\to\Z^e$ satisfying~$\sigma^{\psi(u)}\circ F=F\circ \sigma^{u}$. However, the action on~$Y$ would not be uniquely determined from the action on~$X$ unless~$\psi$ is surjective; and if one assumes that~$\psi$ is surjective then, as~$\Z^e$ is a free group, it has a right-inverse~$\varphi$ which satisfies the definition given above:~$\sigma^u\circ F=\sigma^{\psi\circ \varphi(u)}\circ F=F\circ \sigma^{\varphi(u)}$. Therefore, the corrected version of the alternative definition would be more restrictive.
%\end{remark}

Note that a weak factor (resp.~weak conjugacy) map~$f:X\to Y$ can be seen as a factor (resp.~conjugacy) map~$f':X'\to Y$ where~$X'$ is~$X$ endowed with the~$\Z^e$-action~$p\star x:=\varphi(p)\cdot x$.

The Curtis-Hedlund-Lyndon theorem \cite[Theorem 6.2.9]{LM95}, stating that factor maps are equivalent to sliding block codes, immediately extends to weak factor maps.
\begin{proposition}[\ti{Curtis-Hedlund-Lyndon theorem}]\label{prop_hedlund}
Let~$\varphi:\Z^e\to\Z^d$ be a group homomorphism and~$f:A^{\Z^d}\to B^{\Z^e}$ be a function. The following conditions are equivalent:
\begin{itemize}
\item $f$ is continuous and satisfies~$\sigma^p\circ f=f\circ \sigma^{\varphi(p)}$,
\item There exists~$N\geq 0$ and a function~$\rho:Q_N\to B$, where~$Q_N=[-N,N]^d$, such that~$f(x)_p=\rho(x|_{\varphi(p)+Q_N})$.
\end{itemize}
\end{proposition}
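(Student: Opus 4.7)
The plan is to follow the classical Curtis–Hedlund–Lyndon argument, noting that the only modification needed is to twist translations on the input side by $\varphi$.

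For the easy direction, suppose $f$ is given by a local rule $\rho : A^{Q_N} \to B$ via $f(x)_p = \rho(x|_{\varphi(p)+Q_N})$. Continuity is immediate because each output coordinate depends on only finitely many input coordinates. Equivariance is a direct computation: for $p, q \in \Z^e$, one has $(\sigma^p \circ f)(x)_q = f(x)_{p+q} = \rho(x|_{\varphi(p+q)+Q_N}) = \rho(x|_{\varphi(p)+\varphi(q)+Q_N})$, which equals $\rho((\sigma^{\varphi(p)}x)|_{\varphi(q)+Q_N}) = f(\sigma^{\varphi(p)}x)_q$, using that $\varphi$ is a homomorphism.

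For the nontrivial direction, assume $f$ is continuous and satisfies $\sigma^p \circ f = f \circ \sigma^{\varphi(p)}$. The coordinate evaluation map $x \mapsto f(x)_0$ is a continuous function from the compact space $A^{\Z^d}$ to the discrete finite set $B$, hence it is locally constant. By compactness, a finite number of cylinder neighborhoods cover $A^{\Z^d}$, and we may shrink them to cylinders over a common window $Q_N$ for some $N \geq 0$; thus $f(x)_0$ depends only on $x|_{Q_N}$. This lets us define $\rho : A^{Q_N} \to B$ by $\rho(\pi) = f(x)_0$ for any (equivalently, every) $x$ extending $\pi$.

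It remains to recover $f(x)_p$ for arbitrary $p \in \Z^e$. Using equivariance,
\begin{equation*}
f(x)_p = \sigma^p(f(x))_0 = f(\sigma^{\varphi(p)}(x))_0 = \rho\bigl((\sigma^{\varphi(p)}x)|_{Q_N}\bigr),
\end{equation*}
and since $(\sigma^{\varphi(p)}x)(q) = x(\varphi(p)+q)$, the restriction $(\sigma^{\varphi(p)}x)|_{Q_N}$ is, after the natural identification of $Q_N$ with $\varphi(p)+Q_N$ by translation, exactly $x|_{\varphi(p)+Q_N}$. This yields $f(x)_p = \rho(x|_{\varphi(p)+Q_N})$ as required. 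There is no serious obstacle here; the only point to be careful about is distinguishing translations on the two sides (coordinates in $\Z^e$ are indexed by $p$ while input windows live in $\Z^d$ and are translated by $\varphi(p)$), but this is bookkeeping rather than a substantive difficulty.
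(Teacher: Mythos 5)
Your proof is correct and follows the same route as the paper: the easy direction is the same direct computation with $\varphi$ a homomorphism, and the hard direction uses continuity plus compactness to get a uniform window for $f(x)_0$ and then equivariance to propagate the rule to all coordinates, exactly as in the paper's (more terse) argument. Your extra care in identifying $(\sigma^{\varphi(p)}x)|_{Q_N}$ with $x|_{\varphi(p)+Q_N}$ via translation is a point the paper leaves implicit, but it is the same proof.
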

We will call the function~$\rho$ a \textbf{rule}. When~$d=e$ and~$\varphi$ is the identity, we are back to the notion of sliding block code.
\begin{proof}
The usual proof when~$\varphi$ is the identity extends immediately as follows.

Assume the first conditions. As~$f$ is continuous, there exist~$N$ and~$\rho$ such that~$f(x)_0=\rho(x|_{Q_N})$. One has~$f(x)_p=(\sigma^p\circ f(x))_0=(f(\sigma^{\varphi(p)}(x))_0=\rho(\sigma^{\varphi(p)}(x)|_{Q_N})=\rho(x|_{\varphi(p)+Q_N})$.

Conversely, if~$f$ is given by~$N$ and~$\rho$, then~$f$ is continuous and~$(\sigma^q\circ f(x))_p=f(x)_{p+q}=\rho(x|_{\varphi(p)+\varphi(q)+Q_N})=\rho(\sigma^{\varphi(q)}(x)|_{\varphi(p)+Q_N})=f(\sigma^{\varphi(q)}(x))_p$, so~$\sigma^q\circ f=f\circ \sigma^{\varphi(q)}$.
\end{proof}

Let us illustrate the notions of weak factor and weak conjugacy on several examples.

\begin{example}[\ti{Fullshifts of different dimensions}]
If~$d>e$, then the fullshift~$X_e:=\Sigma^{\Z^e}$ is a weak factor of the fullshift~$X_d:=\Sigma^{\Z^d}$. Let~$\varphi:\Z^e\to \Z^d$ be any injective homomorphism and~$f:X_d\to X_e$ be defined by~$f(x)=x\circ \varphi$. One can check that~$\sigma^p\circ f=f\circ \sigma^{\varphi(p)}$ and~$f$ is surjective because for every~$y\in X_e$, one can build a pre-image~$x\in X_d$ by filling the positions in~$\im{\varphi}$ (without any conflict as~$\varphi$ is injective) and filling the rest arbitrarily.

However,~$X_d$ is not a weak factor of~$X_e$. Indeed, if~$f:X_e\to X_d$ is a weak factor map via a homomorphism~$\varphi:\Z^d\to\Z^e$, then~$\varphi$ is non-injective and if~$p\in\ker\varphi$ then~$f(X_e)$ only contains~$p$-periodic configurations, as~$\sigma^p\circ f=F\circ \sigma^{\varphi(p)}=f$.
\end{example}

\begin{example}[\ti{Geometric transformation}]\label{ex_geom}
Let~$\varphi:\Z^d\to\Z^d$ be an isomorphism. It induces a homeomorphism~$f:\Sigma^{\Z^d}\to\Sigma^{\Z^d}$ defined by~$f(x)=x\circ \varphi$. If~$X\subseteq \Sigma^{\Z^d}$ is a subshift, then~$f(X)$ is also a subshift. It is usually not conjugate to~$X$, but it is weakly conjugate because again,~$\sigma^p\circ f=f\circ \sigma^{\varphi(p)}$ for all~$p\in\Z^d$.

For instance, a~$\Z^2$-subshift can be reflected accross an axis, or rotated by~$90$ degrees, yielding aweakly conjugate subshift.
\end{example}

\begin{example}[\ti{Higher power presentation}]\label{ex_block}
Let~$X\subseteq \Sigma^{\Z^d}$ be a subshift,~$a=(a_1,\ldots,a_d)\in\Z^d$ have positive coordinates and~$R_a=[0,a_1-1]\times \ldots\times[0,a_d-1]$. The~$a$-higher power presentation of~$X$ is the subshift~$[X]_a\subseteq \Gamma^{\Z^d}$, where~$\Gamma=\Sigma^{R_a}$, obtained by dividing~$\Z^d$ into a regular grid of copies of the rectangle~$R_a$ and seeing each pattern in such a rectangle as a symbol in the alphabet~$\Gamma$ (it can be found in \cite[Definition 1.4.4]{LM95} in the one-dimensional case). More formally,~$[X]_a$ is the image of~$X$ by the function~$f:\Sigma^{\Z^d}\to\Gamma^{\Z^d}$ defined by~$f(x)=y$ where~$y(p)$ is the~$R_a$-pattern appearing at position~$ap=(a_1p_1,\ldots,a_dp_d)$ in~$x$.

One has~$\sigma^p\circ f=f\circ \sigma^{ap}$, so~$[X]_a$ is a weak factor of~$X$ via the homomorphism~$\varphi(p)=ap$.% (one can also see~$f$ as a conjugacy from~$X$ endowed with the action~$p\cdot x=\sigma^{ap}(x)$ to the subshift~$[X]_p$).
\end{example}

\begin{example}[\ti{Fullshifts on different alphabets}]
Let~$X=\Sigma^{\Z^d}$ and~$Y=\Gamma^{\Z^d}$ be~$\Z^d$-fullshifts, with~$|\Sigma|<|\Gamma|$. First,~$X$ is a factor of~$Y$: fix some surjective function~$\Gamma\to\Sigma$ and extend it from~$Y$ to~$X$. Conversely,~$Y$ is not a factor of~$X$ because the entropy of~$Y$ is~$h(Y)=\log |\Gamma|$ which is strictly larger than the entropy of~$X$,~$h(X)=\log|\Sigma|$ (see \cite{LM95}). However,~$Y$ is a weak factor of~$X$. Indeed, let~$n\in\N$ be such that~$|\Sigma^{n^d}|\geq |\Gamma|$ and~$a=(n,\ldots,n)\in\Z^d$. The~$a$-higher power presentation~$[X]_a$ is the full-shift on the alphabet~$\Sigma^{n^d}$ so~$Y$ is a factor of~$[X]_a$, which is a weak factor of~$X$ (see Example \ref{ex_block}), so~$Y$ is a weak factor of~$X$.
\end{example}
%
%\begin{example}[\ti{Periodic subshifts}]
%Let~$d<e$,~$X$ be a~$\Z^d$-subshift and~$Y$ a~$\Z^e$-subshift. If~$Y$ is a weak factor of~$X$, then~$Y$ has a period, i.e.~there exists a non-zero vector~$u\in\Z^e$ such that~$\sigma^u(y)=y$ for all~$y\in Y$. Indeed, let~$F:X\to Y$ and~$\varphi:\Z^e\to\Z^d$ witness that~$Y$ is a weak factor of~$X$. As~$d<e$,~$\varphi$ is not injective, and for any~$u\in\ker\varphi$,~$\sigma^u\circ F=F\circ \sigma^{\varphi(u)}=F$.
%\end{example}

Contrary to factors, weak factors do not preserve most of the dynamical properties, except the mixing property in some cases, as stated in the next result.
\begin{proposition}\label{prop_weak_factor_mixing}
Let~$X$ be a mixing subshift. If a subshift~$Y$ is a weak factor of~$X$, then~$Y$ satisfies at least one of the two following properties:
\begin{itemize}
\item $Y$ is periodic in the sense that there exists~$p\neq 0$ such that~$\sigma^p(y)=y$ for all~$y\in Y$,
\item Or~$Y$ is mixing.
\end{itemize}
\end{proposition}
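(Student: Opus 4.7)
The plan is to case-split on whether the homomorphism $\varphi : \Z^e \to \Z^d$ associated with the weak factor map $f : X \to Y$ is injective. If $\varphi$ has a nontrivial kernel, I would pick $p \neq 0$ with $\varphi(p) = 0$; the intertwining relation then gives $\sigma^p \circ f = f \circ \sigma^{\varphi(p)} = f$, and surjectivity of $f$ forces $\sigma^p(y) = y$ for every $y \in Y$, so $Y$ is periodic and the first alternative holds.

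If $\varphi$ is injective, I would show that $Y$ is mixing by lifting valid $Y$-patterns to valid $X$-patterns, combining them using the mixing of $X$, and pushing the result back down through $f$. First, by continuity of $f$ together with compactness of $X$ (the same reasoning used to produce input windows), there is a finite $R \subseteq \Z^d$ and a rule $\rho$ such that $f(x)_p = \rho(x|_{\varphi(p) + R})$. Second, injectivity of $\varphi$ on $\Z^e$ forces injectivity of its $\R$-linear extension $\R^e \to \R^d$, since any nontrivial real kernel would contain a rational, hence integer, vector. Compactness of the $\ell^\infty$ unit sphere of $\R^e$ then supplies constants $c, C > 0$ with $c\|p\|_\infty \leq \|\varphi(p)\|_\infty \leq C\|p\|_\infty$ for every $p \in \Z^e$.

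Given $m \in \N$, every set of the form $\varphi(F) + R$ with $\diam(F) \leq m$ has diameter at most $m' := Cm + \diam(R)$. I would apply the mixing of $X$ to $m'$ to obtain some $n'$, and choose $n := \ceil{(n' + \diam(R))/c}$. For $F, G \subseteq \Z^e$ with $\diam(F), \diam(G) \leq m$ and $d(F, G) \geq n$, and valid $Y$-patterns $\pi, \pi'$ on $F, G$, I would lift them through $f$ to valid $X$-patterns $\tilde\pi, \tilde\pi'$ on $\varphi(F) + R$ and $\varphi(G) + R$ respectively. The triangle inequality gives $d(\varphi(F) + R, \varphi(G) + R) \geq c \cdot d(F, G) - \diam(R) \geq n'$, so the mixing of $X$ yields some $\tilde x \in X$ extending $\tilde\pi \cup \tilde\pi'$. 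Then $f(\tilde x)$ extends $\pi \cup \pi'$, because on $F$ and on $G$ it is determined by applying the rule $\rho$ to $\tilde\pi$ and $\tilde\pi'$ respectively. Hence $F$ and $G$ are independent in $Y$, so $Y$ is mixing.

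The main obstacle is the distance comparison between $\Z^e$ and $\Z^d$, which requires passing through the real linear extension of $\varphi$ to obtain the bi-Lipschitz constants; once these are in place the argument is essentially bookkeeping. The case split is essential, since periodic weak factors arise precisely when $\varphi$ has a nontrivial kernel, and no mixing of $X$ can prevent that degeneracy.
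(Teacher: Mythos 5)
Your proposal is correct and follows essentially the same route as the paper: the same case split on injectivity of $\varphi$, the same periodicity argument from $\ker\varphi$, and in the injective case the same strategy of pushing regions forward through the sliding-block representation $f(x)_p=\rho(x|_{\varphi(p)+R})$ and invoking the mixing of $X$ on the enlarged regions. The only difference is that you make explicit, via the bi-Lipschitz bounds on the real extension of $\varphi$, the step the paper dispatches with ``such an $n$ exists as $\varphi$ is injective.''
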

\begin{proof}
Let~$f:X\to Y$ be a weak factor map via~$\varphi:\Z^e\to\Z^d$. If~$\varphi$ is not injective, then let~$p\in\ker(\varphi)$ be non-zero. One has~$\sigma^p\circ f=f\circ \sigma^{\varphi(p)}=f$, so~$\sigma^p(y)=y$ for all~$y\in Y$.

Now assume that~$\varphi$ is injective. Let~$N\geq 0$ and~$\rho$ be given by Proposition \ref{prop_hedlund}, satisfying~$F(x)_p=\rho(x|_{\varphi(q)+Q_N})$. Let~$m\in\N$, let~$m'=\diam\varphi([0,m]^e)+2N$, let~$n'$ be associated to~$m'$ using the mixing property of~$X$, and~$n$ be such that if~$|p|\geq n$, then~$\varphi(p)\geq n'+2N$. Such an~$n$ exists as~$\varphi$ is injective. Let~$F,G\subseteq\Z^e$ be two regions of diameters at most~$m$, and at distance at least~$n$ from each other. One has~$\W_f(F)\subseteq \varphi(F)+Q_N$ and~$\W_f(G)\subseteq \varphi(G)+Q_N$, so~$\W_f(F)$ and~$\W_f(G)$ have diameters at most~$m'$. Moreover,~$d(F,G)\geq n$ implies that~$d(\varphi(F),\varphi(G))\geq n'+2N$, so~$d(\W_f(F),\W_f(G))\geq n'$. By the mixing property of~$X$,~$\W_f(F)$ and~$\W_f(G)$ are independent, so~$F$ and~$G$ are independent as well.
\end{proof}

%SSSSSSSSS
\section{Properties of the classes \texorpdfstring{$\gen$ and $\nar$}{L0,L2}}\label{sec_properties}

We investigate general properties of the classes.

First, belonging to~$\gen$ implies the existence of strongly periodic configurations. A configuration~$x\in\Sigma^{\Z^d}$ is \textbf{strongly periodic} if its orbit~$\{\sigma^p(x):p\in\Z^d\}$ is finite, equivalently if there exists~$k\in\N\setminus \{0\}$ such that~$\sigma^{kp}(x)=x$ for all~$p\in\Z^d$. It is known that every countable subshift contains a strongly periodic configuration \cite{Dolbilin95,BallierDJ08} (indeed, any configuration of maximal Cantor-Bendixson rank is strongly periodic).
\begin{proposition}
Every subshift in~$\gen$ has strongly periodic configurations.
\end{proposition}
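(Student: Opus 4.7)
The plan is to use the canonical form of Proposition \ref{prop_normal_form}, which writes any $X\in\gen$ as a factor of a product $P=K\times A^{\Z^d}\times A^{\Z^d/H_1}\times\ldots\times A^{\Z^d/H_k}$ of basic subshifts, and to exhibit a strongly periodic element of $P$ whose image under the factor map will then give a strongly periodic element of $X$.

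First I would observe that strong periodicity is preserved under equivariant continuous maps: if $f:X\to Y$ commutes with the shift and $x$ has finite orbit, then $f(\text{orbit}(x))=\text{orbit}(f(x))$ is finite, so $f(x)$ is strongly periodic. Moreover, strong periodicity is preserved by finite products under the diagonal action, because the stabilizer of a tuple is the intersection of the stabilizers of its components, and a finite intersection of finite-index subgroups of $\Z^d$ has finite index.

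Next I would build a strongly periodic element in each of the basic factors. The countable factor $K$ admits a strongly periodic configuration by the cited result \cite{Dolbilin95,BallierDJ08}. In $A^{\Z^d}$ and in each $A^{\Z^d/H_i}$, any constant configuration has a singleton orbit, hence is strongly periodic. Taking the tuple of these configurations yields a strongly periodic element of $P$.

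Finally, applying the factor map $f:P\to X$ produces a strongly periodic element of $X$. The only step that requires an external input is the existence of a strongly periodic configuration in an arbitrary countable subshift; everything else is a straightforward combination of the canonical form with the observation that constant and strongly periodic configurations survive products and equivariant continuous maps. No serious obstacle is expected.
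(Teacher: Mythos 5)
Your proof is correct and follows essentially the same route as the paper: take a strongly periodic point in the countable factor $K$ (via the cited result), constant configurations in the fullshift and periodic-shift factors, note the resulting tuple has finite orbit, and push it forward through the factor map. The paper merely phrases this with the condensed canonical form $K\times A^E$ of Proposition \ref{prop_normal_form_bis} instead of the expanded product, which changes nothing of substance.
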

\begin{proof}
Let~$f:K\times A^E\to X$ be a factor map. As~$K$ is countable, it contains a strongly periodic configuration~$x$. %there exists a minimal countable ordinal~$\alpha$ such that the iterated Cantor-Bendixson derivative~$K^{(\alpha+1)}$ is empty, hence~$K^{(\alpha)}$ is finite, because it is compact and only contains isolated points \cite[Section 6.C]{Kechris95}. Let~$(e_i)_{i<d}$ be the canonical basis of~$\Z^d$. Each~$e_i$ acts as a homeomorphism on~$K$ so it sends~$K^{(\alpha)}$ to itself. As that set is finite, each one of its elements is periodic under the action of~$e_i$. Choose some~$x\in K^{(\alpha)}$ and let~$k$ be a common period of~$x$ under the actions of the vectors~$e_i$. One has~$\sigma^{kp}(x)=x$ for all~$p\in\Z^d$.
Let~$a$ be a constant sequence in~$A^e$. The orbit of~$(x,a)$ is finite, so the orbit of~$y=f(x,a)$ is finite as well.% As~$\sigma^{kp}(a)=a$ and~$f$ commutes with the shift action, one has~$\sigma^{kp}(y)=y$ for all~$p\in\Z^d$.
\end{proof}

The next result investigates simple closure properties of narrow functions. It will only be used in Section \ref{sec_rotation}, and can be skipped at first reading.
\begin{proposition}\label{prop_narrow}
Let~$\Sigma$ be a finite alphabet,~$d\geq 1$ and~$X,Y\subseteq\Sigma^{\Z^d}$ be compact.
\begin{itemize}
\item If~$X,Y$ are images of narrow functions, then so is~$X\cup Y$,
\item If~$X$ is the image of a narrow function and~$\pi$ is a finite pattern, then~$X\cap [\pi]$ is the image of a narrow function,
\item If~$X\in\nar$, then there exists a finite pattern~$\pi$ such that~$X\cap [\pi]$ is the image of a narrow function.
\end{itemize}
\end{proposition}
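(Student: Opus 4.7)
The plan is to prove the three items in turn, with the second being the main technical step on which the third rests.

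For item 1, given narrow functions $f : A^\N \to \Sigma^{\Z^d}$ with image $X$ and $g : B^\N \to \Sigma^{\Z^d}$ with image $Y$, I would build $h : (A \times B \times \{0,1\})^\N \to \Sigma^{\Z^d}$ that reads the switch bit at input position $0$ to decide whether to output $f$ applied to the $A$-projection or $g$ applied to the $B$-projection. Every output position $p$ of $h$ then depends on input positions contained in $\{0\} \cup \W_f(p) \cup \W_g(p)$, a uniformly bounded set, and the image is exactly $X \cup Y$.

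For item 2, let $f : A^\N \to \Sigma^{\Z^d}$ be $r$-narrow with image $X$ and let $\pi$ be a finite pattern; I may assume $X \cap [\pi] \neq \emptyset$, the empty case being trivial. Set $R = \bigcup_{q \in \dom(\pi)} \W_f(q)$, a finite subset of $\N$. By narrowness, $f(a)|_{\dom(\pi)}$ depends only on $a|_R$, so $f^{-1}([\pi]) = \{a \in A^\N : a|_R \in P\}$ for some $P \subseteq A^R$. Fixing any $w_0 \in P$, I would define $g : A^\N \to \Sigma^{\Z^d}$ by $g(a) = f(a')$, where $a'$ coincides with $a$ except that $a'|_R$ is forcibly reset to $w_0$ whenever $a|_R \notin P$. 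Each output position $q$ of $g$ then depends on inputs in $R \cup \W_f(q)$, of size at most $|R| + r$, and the image of $g$ is $X \cap [\pi]$.

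For item 3, since $X$ is compact and hence Baire, writing $X = \bigcup_n X_n$ with each $X_n$ compact (as the continuous image of a compact space) yields, by the Baire category theorem, some $X_n$ with non-empty interior in $X$. The cylinder sets $X \cap [\pi]$ for finite patterns $\pi$ form a basis of the topology of $X$, so there exists a finite pattern $\pi$ with $\emptyset \neq X \cap [\pi] \subseteq X_n$; then $X \cap [\pi] = X_n \cap [\pi]$ is the image of a narrow function by item 2.

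The main obstacle is item 2: one must restrict the input of $f$ to the clopen set $f^{-1}([\pi])$ without breaking narrowness, i.e. while keeping the window at each output position uniformly bounded. The key trick is to overwrite the whole finite test region $R$ with a fixed accepted word $w_0 \in P$ rather than modifying individual coordinates based on their own values; this way the window grows by only the constant finite set $R$, independent of the output position.
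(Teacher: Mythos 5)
Your proposal is correct, and items 1 and 3 match the paper's argument (same switch-symbol construction for unions, same Baire category step reducing to item 2). The only real divergence is in item 2: the paper fixes a single input pattern $\xi$ on the window $R=\W_f(\dom(\pi))$ that is mapped to $\pi$, hard-codes it unconditionally to get a narrow function $f_\xi$, and then writes $X\cap[\pi]$ as the \emph{finite union} over all such $\xi$ of the images $\mathrm{im}(f_\xi)$, finally invoking item 1 to merge them into one narrow function. You instead build a single function directly, by resetting $a|_R$ to a fixed accepted word $w_0$ only when $a|_R\notin P$; this makes each output window grow by at most $|R|$ just as in the paper, but yields $X\cap[\pi]$ as the image of one function without passing through item 1. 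Both are valid; yours is marginally more economical, while the paper's version decouples the two constructions (each $f_\xi$ is the simpler ``constant on $R$'' modification). The degenerate case $X\cap[\pi]=\emptyset$, which you flag and the paper glosses over, is harmless in either treatment.
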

\begin{proof}
Let~$f:A^\N\to X$ and~$g:A^\N\to Y$ be surjective narrow functions with~$|A|\geq 2$ (we can indeed assume that they start from the same alphabet). Let~$a_0\in A$ and note that~$A^\N\cong A\times A^\N$. Define~$h:A\times A^\N\to X\cup Y$ sending~$(a,s)$ to~$f(s)$ if~$a=a_0$, to~$g(s)$ otherwise. For any~$p\in\Z^d$,~$|\W_f(p)|=1+|\W_{f_X}(p)|+|\W_{f_Y}(p)|$ which is bounded.

Assume that~$X$ is the image of a narrow function~$f:A^\N\to X$ and let~$\pi$ be a finite pattern with domain~$F$. Let~$\xi$ be a pattern on the input window~$\W_f(F)$ which is sent to~$\pi$ by~$f$. Let~$f_\xi:A^\N\to X$ be defined as follows: given~$a$, let~$a'$ be obtained by replacing the content of~$\W_f(F)$ by~$\xi$, and let~$f_\xi(a)=f(a')$. $f_\xi$ is a narrow function. One has~$X=\im{f}=\bigcup_\xi\im{f_\xi}$. As there are finitely many possible patterns~$\xi$,~$X$ is a finite union of images of narrow functions, so it is itself the image of a narrow function by the first assertion.

By the Baire category theorem, if~$X=\bigcup_{i\in\N}X_i$ where each~$X_i$ is an image of a narrow function, then there exists a finite pattern~$\pi$ and~$i$ such that~$X\cap [\pi]\subseteq X_i$. As~$X_i$ is the image of a narrow function, so is~$X_i\cap [\pi]$ by the second assertion.
\end{proof}
%sss
\subsection{Closure by weak factors}
We now show that the classes~$\gen$ and~$\nar$ are closed under weak factors.

It is easier to prove this result by first showing that one can relax the definition of~$\gen$ by replacing countable~$\Z^d$-subshifts by any~$\Z^d$-space which is countable, compact and zero-dimensional, giving an equivalent definition. %It will be often convenient to show that a subshift belongs to~$\gen$ by using such countable~$\Z^d$-spaces, without having to convert them into subshifts.

\begin{proposition}\label{prop_countable_Zd_space}
Let~$K$ be a countable compact zero-dimensional~$\Z^d$-space and~$E$ a countable~$\Z^d$-set. If a~$\Z^d$-subshift~$X$ is a factor of~$K\times A^E$, then~$X\in\gen$.
\end{proposition}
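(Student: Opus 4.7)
The plan is to reduce the countable compact zero-dimensional $\Z^d$-space $K$ to a genuine countable $\Z^d$-subshift $K'$, in such a way that the factor map $f : K \times A^E \to X$ factors through $K' \times A^E$. Then Proposition \ref{prop_normal_form_bis} applies directly.

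First, I would exploit continuity and compactness exactly as in the proof of Proposition \ref{prop_normal_form_bis} to extract a finite input window on the $A^E$ side: there exists a finite set $R \subseteq E$ and a continuous map $\rho : K \times A^R \to \Sigma$ such that $f(k,a)_0 = \rho(k, a|_R)$, and hence, by equivariance, $f(k,a)_p = \rho(p\cdot k, \sigma^p(a)|_R)$ for every $p \in \Z^d$. For each $\beta \in A^R$ and each $s \in \Sigma$, the preimage $U_{\beta,s} = \{k \in K : \rho(k,\beta) = s\}$ is clopen in $K$ because $\rho(\cdot, \beta)$ is continuous and $K$ is zero-dimensional (here continuity on the discrete fiber $\{\beta\} \times A^R$ suffices). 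As $\beta$ ranges over the finite set $A^R$, we obtain finitely many clopen partitions of $K$; let $\mathcal{V} = \{V_1,\dots,V_n\}$ be a common refinement, which is still a finite clopen partition.

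Next I would code $K$ by $\mathcal{V}$. Define $g : K \to \{1,\dots,n\}^{\Z^d}$ by $g(k)_p = i$ iff $p\cdot k \in V_i$. Continuity follows from the $V_i$ being clopen, equivariance from $g(k)_{p+q} = i$ iff $(p+q)\cdot k \in V_i$ iff $g(q \cdot k)_p = i$, and therefore $K' := g(K) \subseteq \{1,\dots,n\}^{\Z^d}$ is a closed shift-invariant subset, i.e.\ a subshift. It is countable as a continuous image of the countable space $K$.

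It remains to show that $f$ factors through $g \times \id_{A^E}$. By construction $\mathcal{V}$ refines each partition $\{U_{\beta,s}\}_s$, so knowing which $V_i$ contains a point $k'' \in K$ determines $\rho(k'', \beta)$ for every $\beta \in A^R$; write this value as $\tilde\rho(i, \beta)$. Then $f(k,a)_p = \tilde\rho(g(k)_p, \sigma^p(a)|_R)$, which depends only on $g(k)$ and $a$. This gives a well-defined continuous equivariant surjection $h : K' \times A^E \to X$ with $h(g(k), a) = f(k,a)$, so $X$ is a factor of $K' \times A^E$, and Proposition \ref{prop_normal_form_bis} yields $X \in \gen$.

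The only point requiring care is the existence of the finite clopen partition $\mathcal{V}$: this rests on having only \emph{finitely} many relevant parameters $\beta \in A^R$, which is why one must first extract the finite input window $R$ from the continuity of $f$ before invoking zero-dimensionality of $K$. The rest of the argument is a routine equivariant bookkeeping.
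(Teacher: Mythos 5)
Your proof is correct and follows essentially the same route as the paper's: both code $K$ symbolically through a finite clopen partition of $K$ determined by the local rule, take the image of that coding as a countable compact $\Z^d$-subshift $K'$, and check that the factor map descends to $K'\times A^E$, after which Proposition \ref{prop_normal_form_bis} applies. (A minor remark: the sets $U_{\beta,s}$ are clopen simply because $\rho(\cdot,\beta)$ is continuous with values in the finite discrete set $\Sigma$, so zero-dimensionality is not what is doing the work at that step.)
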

\begin{proof}
We build an intermediate countable~$\Z^d$-subshift~$K'$ which is a factor of~$K$, and such that~$X$ is a factor of~$K'\times A^E$. The idea is to choose a finite clopen partition~$\Gamma$ of~$K$ and to build the corresponding symbolic dynamics: to each~$k\in K$ is associated the sequence of cells in~$\Gamma$ to which the iterates of~$k$ belong; the set of such sequences is a subshift.

Let~$\Sigma$ be the alphabet of~$X$. By compactness of~$K$ and~$A^E$, and zero-dimensionality of~$K$, there exists a finite set~$\Gamma$ and continuous maps~$g:K\to\Gamma$ and~$f:\Gamma\times A^E\to\Sigma$ such that~$F(k,a)_0=f(g(k),a)$ (the preimages of elements of~$\Gamma$ by~$g$ form a clopen partition of~$K$). Let~$G:K\to\Gamma^{\Z^d}$ be defined by~$G(k)_p=g(p\cdot k)$ and let~$K'=G(K)$. As~$K$ is countable, so is~$K'$. As~$K$ is compact and~$G$ is continuous,~$K'$ is compact. One easily checks that~$\sigma^p(G(k))=G(p\cdot k)$, so~$G$ is a factor map and~$K'$ is shift-invariant. Therefore,~$K'$ is a countable compact~$\Z^d$-subshift.

One has~$F(k,a)_p=f(G(k)_p,p\cdot a)$. Indeed,
\begin{align*}
F(k,a)_p&=(p\cdot F(k,a))_0\\
&=(F(p\cdot k,p\cdot a)_0\\
&=f(g(p\cdot k),p\cdot a)\\
&=f(G(k)_p,p\cdot a).
\end{align*}
Let~$H:K'\times A^E\to X$ be defined by~$H(k',a)_p=f(k'_p,p\cdot a)$. The previous equality shows that for any~$k\in K$ and~$a\in A^E$, one has~$F(k,a)=H(G(k),a)$. That~$H$ is a factor map easily follows from the fact that~$F$ and~$G$ are factor maps. Therefore,~$X$ is a factor of~$K'\times A^E$, so~$X\in\gen$.
\end{proof}

We then derive the announced result.
\begin{proposition}[\ti{$\gen$ is closed under weak factors}]
If~$Y$ is a weak factor of~$X$ and~$X\in \gen$, then~$Y\in \gen$.
\end{proposition}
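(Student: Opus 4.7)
The plan is to reduce the weak-factor situation to an ordinary factor situation by changing the acting group on the presentation of~$X$, and then apply Proposition \ref{prop_countable_Zd_space}. Concretely, let $f:X\to Y$ be a weak factor map via the homomorphism $\varphi:\Z^e\to\Z^d$, and let $g:K\times A^E\to X$ be a factor map of $\Z^d$-spaces given by Proposition \ref{prop_normal_form_bis}, with $K$ a countable $\Z^d$-subshift, $A$ a finite alphabet, and $E$ a countable $\Z^d$-set.

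The composition $h=f\circ g: K\times A^E\to Y$ is continuous and surjective. A direct computation gives, for every $p\in\Z^e$,
\begin{equation*}
\sigma^p\circ h=\sigma^p\circ f\circ g = f\circ \sigma^{\varphi(p)}\circ g = f\circ g\circ \sigma^{\varphi(p)} = h\circ \sigma^{\varphi(p)},
\end{equation*}
where $\sigma^{\varphi(p)}$ on $K\times A^E$ denotes the diagonal $\Z^d$-action. Now I would pull the $\Z^d$-action back along $\varphi$: define a $\Z^e$-action on $K$ by $p\star k=\varphi(p)\cdot k$, and view $E$ as a $\Z^e$-set $E'$ via $p\star e=\varphi(p)\cdot e$. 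With these actions the space $K\times A^E$ equipped with the diagonal $\Z^e$-action is exactly $K'\times A^{E'}$, where $K'$ denotes $K$ with its new $\Z^e$-action. Under these new actions $h$ is a genuine factor map of $\Z^e$-spaces from $K'\times A^{E'}$ onto the $\Z^e$-subshift $Y$.

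It remains to check the hypotheses of Proposition \ref{prop_countable_Zd_space} with $d$ replaced by $e$. The set $E'$ is still countable, and it carries a $\Z^e$-action, so $A^{E'}$ is the required product. As for $K'$, the underlying topological space has not changed, so it is still countable, compact, and zero-dimensional; only its action has been replaced by $p\mapsto\sigma^{\varphi(p)}$, which is a continuous $\Z^e$-action. Proposition \ref{prop_countable_Zd_space} therefore applies and yields $Y\in\gen_e\subseteq \gen$.

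There is no real obstacle here beyond the bookkeeping of the two group actions: the whole point is that Proposition \ref{prop_countable_Zd_space} was stated precisely so that the ``countable factor'' $K$ need not itself be a subshift of the acting group, which is exactly what breaks after pulling back along a non-identity homomorphism $\varphi$. Once that proposition is available, the closure under weak factors is essentially formal.
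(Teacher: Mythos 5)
Your proposal is correct and follows essentially the same route as the paper: compose the weak factor map with the canonical factor map $K\times A^E\to X$, pull the actions back along $\varphi$ to turn $K$ and $E$ into a $\Z^e$-space and $\Z^e$-set, and invoke Proposition \ref{prop_countable_Zd_space}, whose relaxed hypothesis on $K$ is precisely what makes this work. Nothing is missing.
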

\begin{proof}
Let~$K$ be a countable~$\Z^d$-subshift and~$E$ a countable~$\Z^d$-set. A weak factor of a factor of~$K\times A^E$ is a weak factor of~$K\times A^E$, so it is sufficient to show that any weak factor of~$K\times A^E$ belongs to~$\gen$.

Let then~$X\subseteq\Sigma^{\Z^e}$ a~$\Z^e$-subshift,~$\varphi:\Z^e\to\Z^d$ a group homomorphism and~$F:K\times A^E\to X$ a continuous surjective map such that~$\sigma^u\circ F(k,a)=F\circ \sigma^{\varphi(u)}(k,a)$.

We build a countable compact~$\Z^e$-space~$K'$ and a countable~$\Z^e$-set~$E'$ such that~$X$ is a factor of~$K'\times A^{E'}$. Let~$K'=K$ with the continuous~$\Z^e$-action~$p\star k=\varphi(p)\cdot k$ and~$E'=E$ with the~$\Z^e$-action~$p\star e=\varphi(p)\star e$. One has~$\sigma^u\circ F(k,a)=F(u\star (k,a))$ so~$F$ is indeed a factor map for the~$\star$ action. Note that~$K'$ is not necessarily a~$\Z^d$-subshift, but it is a countable compact zero-dimensional~$\Z^d$-space, so~$X\in\gen$ by Proposition \ref{prop_countable_Zd_space}.
\end{proof}

A similar result holds for~$\nar$.
\begin{proposition}[\ti{$\nar$ is closed under weak factors}]
Let~$Y$ is a weak factor of~$X$. If~$X$ is the image of a narrow function, then so is~$Y$. If~$X\in\nar$, then~$Y\in\nar$.
\end{proposition}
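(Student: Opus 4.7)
The plan is to prove both statements by a single composition argument: given a weak factor map $g:X\to Y$ (via some homomorphism $\varphi:\Z^e\to \Z^d$) and a narrow surjection $f:A^{\N}\to X$, show that $g\circ f:A^{\N}\to Y$ is itself narrow and surjective. The second assertion then follows immediately by applying this to each member of a countable cover $X=\bigcup_n X_n$ of $X$ by images of narrow functions, which gives the cover $Y=\bigcup_n g(X_n)$ by images of narrow functions.

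The key step is a ``narrowness bound'' on the weak factor map $g$ itself. I would establish the analogue of Proposition \ref{prop_hedlund} for weak factor maps between subshifts (rather than fullshifts): there exist $N\geq 0$ and a rule $\rho$ such that for every $x\in X$ and every $q\in \Z^e$, one has $g(x)_q=\rho(x|_{\varphi(q)+Q_N})$. The argument is the standard one: by continuity of $g$ and compactness of $X$, the value $g(x)_0$ depends only on $x|_{Q_N}$ for some uniform $N$, giving a rule $\rho$ on valid $Q_N$-patterns (extended arbitrarily elsewhere); the shift-intertwining $\sigma^q\circ g=g\circ\sigma^{\varphi(q)}$ then yields the claimed formula. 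In particular, each output position $q$ of $g$ has an input window of size at most $(2N+1)^d$, uniformly in $q$.

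Now the composition is handled by the fact that narrowness is closed under composition. For any $q\in\Z^e$, the value $(g\circ f)(a)_q=\rho(f(a)|_{\varphi(q)+Q_N})$ is determined by the at most $(2N+1)^d$ values of $f(a)$ on $\varphi(q)+Q_N$, and each such value $f(a)_p$ is in turn determined by $a$ on the window $\W_f(p)$ of size at most $r$. Thus $(g\circ f)(a)_q$ is determined by $a$ on a set of cardinality at most $r(2N+1)^d$, a bound independent of $q$, so $g\circ f$ is narrow. Surjectivity is automatic from surjectivity of $f$ and $g$. For the second statement, writing $X=\bigcup_n \im{f_n}$ with each $f_n:A_n^{\N}\to X$ narrow, the same argument shows each $g\circ f_n:A_n^{\N}\to Y$ is narrow with image $g(\im{f_n})$, and their images cover $g(X)=Y$, so $Y\in\nar$.

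The only non-routine step is the extension of Curtis--Hedlund--Lyndon to weak factor maps whose domain is a subshift rather than a fullshift, but this amounts to a direct compactness argument identical to the proof already given for fullshifts, so I do not expect any serious obstacle.
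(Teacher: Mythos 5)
Your proposal is correct and is essentially the paper's argument, just spelled out in full: the paper's proof is the one-line observation that every weak factor map is narrow (via the Curtis--Hedlund--Lyndon extension you describe) and that compositions of narrow functions are narrow. The quantitative bound $r(2N+1)^d$ and the treatment of the countable union are exactly the details the paper leaves implicit.
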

\begin{proof}
Every weak factor map is narrow, and a composition of narrow functions is narrow.
\end{proof}

\begin{example}[\ti{Building a~$\Z^2$-subshift from a~$\Z$-subshift}]
To a~$\Z$-subshift~$X\subseteq \Sigma^\Z$ we can associate a~$\Z^2$-subshift~$Y$, obtained by taking any configuration~$x\in X$ and copying it on each row. More precisely, let~$F:\Sigma^{\Z}\to\Sigma^{\Z^2}$ be defined by~$F(x)=y$ where~$y_{(i,j)}=x_i$. The function~$F:X\to Y$ is a weak factor map via the homomorphism~$\varphi:\Z^2\to\Z$ sending~$(i,j)$ to~$i$. Therefore, if~$X\in\mathscr{L}^i$ then~$Y\in\mathscr{L}^i$ as well.
\end{example}

%ssssssss
\subsection{Weak mixing and countable unions}
Most of the subshifts that we consider in this article are weakly mixing. It turns out that this dynamical property makes it significantly simpler to prove that a subshift does not belong to~$\gen$ or~$\nar$.

%sss
\subsubsection{Compact classes of functions}
We first state general results that we will apply to local functions (to be defined) and narrow functions.

If~$A$ is a finite alphabet and~$E$ is countably infinite, then~$A^E$ can be identified with~$A^\N$, endowed with the metric~$d(x,y)=2^{-n}$ where~$n$ is minimal such that~$x(n)\neq y(n)$.

Let~$A,B$ be finite alphabets and~$Y\subseteq A^\N$ be a compact set. The space of continuous functions~$f:Y\to B^\N$ is endowed with the \textbf{compact-open} topology, generated by the sets~$\{f:f[u]\subseteq [v]\}$ as a subbasis, where~$u\in A^*$ and~$v\in B^*$ are finite sequences and~$[u],[v]$ are the corresponding cylinders. For a function~$h:\N\to\N$, the class
\begin{equation*}
\K_h:=\{f:d(x,y)\leq 2^{-h(n)}\implies d(f(x),f(y))\leq 2^{-n}\}
\end{equation*}
is compact in this topology. The function~$h$ is called a modulus of uniform continuity for the functions in~$\K_h$, and expresses that the first~$n$ symbols of~$f(x)$ depend on the first~$h(n)$ symbols of~$x$.

Conversely, for every compact class~$\K$, there exists~$h:\N\to\N$ such that~$\K\subseteq \K_h$. All in all, a class~$\K$ is compact if and only if there exists a closed class~$\C$ and a function~$h$ such that~$\K=\K_h\cap\C$.

\begin{proposition}\label{prop_compact_class}
Let~$Y$ be compact,~$B$ be a finite alphabet and~$\K$ be a compact class of continuous functions from~$Y$ to~$B^\N$.

If a compact set~$X\subseteq B^\N$ is not the image of any~$f\in\K$, then there exists~$N$ such that~$X$ differs from the image of every~$f\in\K$ on~$[0,N]$.
\end{proposition}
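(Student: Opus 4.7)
The plan is to prove the contrapositive. Suppose that for every $N\in\N$ there exists $f_N\in\K$ such that the image of $f_N$ agrees with $X$ on $[0,N]$, meaning $\{z|_{[0,N]}:z\in\im{f_N}\}=\{x|_{[0,N]}:x\in X\}$. I want to extract from $(f_N)_N$ a limit $f\in\K$ whose image equals $X$, contradicting the hypothesis. Since $\K$ is compact in the compact-open topology, which on the space of continuous functions from a compact metric space to a compact metric space is metrizable, I can pass to a subsequence $f_{N_k}$ converging to some $f\in\K$. Moreover, since $\K\subseteq \K_h$ for some modulus $h$, the family $\K$ is uniformly equicontinuous, so by Arzelà--Ascoli the convergence $f_{N_k}\to f$ is actually uniform on $Y$.

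Next I would show $\im{f}\subseteq X$. Take any $z=f(y)$ with $y\in Y$. For every $n$, uniform convergence yields $k$ such that $f_{N_k}(y)|_{[0,n]}=z|_{[0,n]}$ and $N_k\geq n$. By the standing assumption the prefix $f_{N_k}(y)|_{[0,n]}$ is of the form $x|_{[0,n]}$ for some $x\in X$, so every prefix of $z$ is realised by some element of $X$. As $X$ is closed, it follows that $z\in X$.

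Now I would show the reverse inclusion $X\subseteq \im{f}$. Fix $x\in X$. By the standing assumption, for each $k$ there exists $y_k\in Y$ with $f_{N_k}(y_k)|_{[0,N_k]}=x|_{[0,N_k]}$. Since $Y$ is compact, we may pass to a subsequence (reusing notation) so that $y_k\to y\in Y$. The key step is to verify $f_{N_k}(y_k)\to f(y)$, which is the joint convergence issue mentioned above. For any target precision $2^{-n}$, the modulus $h$ gives some $m=h(n)$ such that $d(y_k,y)\leq 2^{-m}$ forces $d(g(y_k),g(y))\leq 2^{-n}$ for all $g\in\K$, in particular for $g=f_{N_k}$; combining this with the uniform bound $d(f_{N_k}(y),f(y))\leq 2^{-n}$ available for large $k$ yields $d(f_{N_k}(y_k),f(y))\to 0$. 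On the other hand $f_{N_k}(y_k)\to x$ by construction, so $f(y)=x$.

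The only delicate point is this joint-convergence step; everything else is bookkeeping. It is handled cleanly by the modulus $h$, which is exactly the information the paper extracts from compactness of $\K$ at the beginning of the subsection, so the proposition follows.
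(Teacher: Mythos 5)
Your proof is correct, but it runs in the opposite direction from the paper's. The paper argues directly: for each $f\in\K$ there is some $n_f$ on which $X$ and $\im{f}$ already differ, this is an open condition on $f$ (because $f\mapsto\im{f}$ is continuous and "differing on $[0,n]$" is detected by finitely many cylinders), and compactness of $\K$ yields a finite subcover whose maximal $n_f$ serves as $N$. You instead prove the contrapositive by sequential compactness: extract a limit $f$ of the counterexample sequence $(f_{N_k})$ and show $\im{f}=X$. The two arguments are dual uses of compactness, and each has to pay for the same analytic fact in a different place: the paper's one-line appeal to continuity of the image operator is, when unwound, essentially your joint-convergence step $f_{N_k}(y_k)\to f(y)$, which you verify explicitly via the uniform modulus $h$ and a triangle inequality. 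Your version is longer but more self-contained (the continuity of $f\mapsto\im{f}$ is never proved in the paper); the paper's is shorter and avoids metrizability/sequential-compactness considerations, though these are harmless here since the compact-open topology on $C(Y,B^\N)$ with $Y$ compact metric is the metrizable topology of uniform convergence. Both inclusions $\im{f}\subseteq X$ and $X\subseteq\im{f}$ are handled correctly in your write-up, so the argument stands.
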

\begin{proof}
For each~$f\in\K$, there exists~$n$ such that~$X$ differs from~$\im{f}$ on~$[0,n]$. The operator~$f\mapsto \im{f}$ is continuous, so for all~$g\in\K$ in a neighborhood of~$f$,~$X$ differs from~$\im{g}$ on~$[0,n]$. As~$\K$ is compact,~$\K$ is covered by finitely many such neighborhoods. Let~$N$ be the maximal~$n$ associated with these neighborhoods. For every~$f\in\K$,~$X$ differs from~$\im{f}$ on~$[0,N]$.
\end{proof}
When applying this result to~$B^{\Z^d}$, we can replace~$[0,N]$ with~$Q_N=[-N,N]^d\subseteq \Z^d$.

The next result shows that under certain conditions, allowing countable unions does not help when expressing a weakly mixing subshift.

\begin{proposition}[\ti{Weak mixing and countable unions}]\label{prop_sigma_compact}
Let~$\C=\bigcup_{i\in\N}\K_i$ where each~$\K_i$ is a compact class of functions from a compact space~$Y_i$ to~$B^{\Z^d}$ such that for all~$i\in\N$, all~$f\in\K_i$ and all~$p\in\Z^d$, there exists~$g\in\K_i$ such that~$\im{g}=\im{\sigma^p\circ f}$.

If~$X$ is a weakly mixing subshift, then~$X$ is a countable union of images of functions in~$\C$ if and only if~$X$ is the image of a single function in~$\C$.
\end{proposition}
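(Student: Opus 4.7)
The easy direction is trivial: one function is already a countable union of one function. For the converse, my plan is to combine Baire category, the shift-closure assumption, weak mixing via characterization~(4), and the compactness of each $\K_i$ to extract a single function in $\C$ whose image is exactly $X$.

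First I would apply the Baire category theorem to the countable closed cover $X=\bigcup_n X_n$, where $X_n=\im{f_n}$ is closed since each $Y_{i_n}$ is compact and $f_n$ is continuous. This yields an index $n^*$ and a valid finite pattern $\pi$ such that $X\cap[\pi]\subseteq X_{n^*}$. I then fix $\K=\K_{i^*}$ with $i^*=i_{n^*}$; the goal becomes to produce a single $g^*\in\K$ with $\im{g^*}=X$. Using the shift-closure hypothesis, for every $p\in\Z^d$ there is $g_p\in\K$ with $\im{g_p}=\sigma^p(X_{n^*})$, and each such image is contained in $X$ and contains $\sigma^p(X\cap[\pi])=X\cap\sigma^p([\pi])$.

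The key step is to show that for every $N\in\N$ there exists $p_N\in\Z^d$ such that $\im{g_{p_N}}$ already realizes every valid $Q_N$-pattern of $X$, i.e.\ $\im{g_{p_N}}|_{Q_N}=X|_{Q_N}$. To obtain this I would extend $\pi$ to a valid $Q_M$-pattern $\hat\pi$ with $M$ larger than both $N$ and $\diam(\dom(\pi))$, and apply characterization~(4) of weak mixing (in the strengthening to arbitrary finite regions mentioned right after the statement) to $\hat\pi$: this produces a single shift $q$ such that for every valid $Q_M$-pattern $\hat\pi'$ the combined pattern $\hat\pi\cup\sigma^q(\hat\pi')$ is valid in $X$. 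Restricting to $Q_N$ and picking a witness configuration $y$, I get $y\in X\cap[\pi]\subseteq X_{n^*}$ with $\sigma^{-q}(y)|_{Q_N}=\pi'$, so every valid $Q_N$-pattern lies in $\im{g_{-q}}|_{Q_N}$. Setting $p_N=-q$ finishes this step. The delicate point, and the step I expect to be the main obstacle, is that a single $q$ must work uniformly for all valid $\pi'$ on $Q_N$; this is exactly what characterization~(4) provides once $\pi$ has been padded up to a common large region $Q_M$.

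Finally I would extract, using compactness of $\K$, a subsequence $g_{p_{N_k}}$ converging to some $g^*\in\K$, and use continuity of $f\mapsto\im{f}$ in Hausdorff distance (as noted in the proof of Proposition~\ref{prop_compact_class}) to pass to the limit: for fixed $M$, once $N_k\geq M$ one has $\im{g_{p_{N_k}}}|_{Q_M}=X|_{Q_M}$, and since $Q_M$ admits only finitely many patterns this equality is preserved by the Hausdorff limit, giving $\im{g^*}|_{Q_M}=X|_{Q_M}$ for every $M$. Since $X$ and $\im{g^*}$ are closed subsets of $B^{\Z^d}$ with the same language, they coincide, so $X=\im{g^*}$ with $g^*\in\K\subseteq\C$.
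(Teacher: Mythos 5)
Your proof is correct and rests on the same three ingredients as the paper's --- Baire category to localize to one compact class $\K_{i^*}$, weak mixing together with the shift-closure hypothesis to show that shifted copies of $f_{n^*}$ realize all valid patterns of $X$, and compactness of $\K_{i^*}$ --- but you assemble them differently. The paper argues by contraposition: assuming $X$ is the image of no function in $\C$, it invokes Proposition \ref{prop_compact_class} to obtain a single uniform window $Q_n$ on which \emph{every} $g\in\K_{i^*}$ differs from $X$, and then one application of weak mixing (in the form of condition (3), independence of $Q_k$ and $p+Q_n$) yields a $g$ agreeing with $X$ on $Q_n$, a contradiction. You instead run the argument directly: for each $N$ you produce $g_{p_N}\in\K_{i^*}$ with $\im{g_{p_N}}|_{Q_N}=X|_{Q_N}$ (using condition (4) applied to a padding of $\pi$, plus the containment $\im{g_{p_N}}=\sigma^{p_N}(X_{n^*})\subseteq X$ for the reverse inclusion), and then extract a limit $g^*$ by sequential compactness, concluding via continuity of $f\mapsto\im{f}$ and the fact that two compact subsets of $B^{\Z^d}$ with the same restrictions to every $Q_M$ coincide. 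The two arguments are equivalent in substance --- Proposition \ref{prop_compact_class} is precisely the compactness step you redo by hand --- but yours is constructive in spirit and avoids the uniform-$N$ lemma, while the paper's is shorter because the compactness is packaged once and for all. One cosmetic point: choose $M$ so that $Q_M\supseteq\dom(\pi)\cup Q_N$ rather than merely $M\geq\diam(\dom(\pi))$, since a priori $\dom(\pi)$ need not contain the origin (here it is harmless, as the Baire step can be arranged to give $\pi$ on some cube $Q_k$).
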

\begin{proof}
Assume~$X$ is not the image of any function in~$\C$. Assume for a contradiction that~$X=\bigcup_{j\in\N} X_j$ where each~$X_j$ is the image of a function~$f_j\in\C$. By the Baire category theorem (Theorem \ref{thm_baire}), there exist~$j,k\in\N$ and a finite valid pattern~$\pi$ on~$Q_k$ such that~$X_j$ contains all the valid extensions of~$\pi$. Let~$i$ be such that~$f_j\in\K_i$. By Proposition \ref{prop_compact_class}, there exists~$n$ such that for every~$g\in\K_i$,~$X$ differs from~$\im{g}$ on~$Q_n$. As~$X$ is weakly mixing, there exists~$p\in\Z^2$ such that~$Q_k$ and~$p+Q_n$ are independent. In particular for every valid~$Q_n$-pattern~$\pi'$,~$\pi\cup\sigma^p(\pi')$ is valid. Any valid configuration extending~$\pi\cup\sigma^p(\pi')$ extends~$\pi$ so belongs to~$X_j$. By assumption about~$\C$, there exists~$g\in\K_i$ such that~$\im{g}=\im{\sigma^{-p}\circ f_j}$. It implies that~$X$ coincides with~$\im{g}$ on~$Q_n$, which contradicts the choice of~$n$.
\end{proof}

%\begin{remark}
%Observe that a topologically transitive/irreducible subshift is not a union of subshifts properly contained in it. So if it does not belong to a class of subshifts, then it is not a union of subshifts of that class. In particular, if it is not a factor of some~$A^E$, then it is not a union of factors of~$A^E$'s.
%\end{remark}
%
%sss
\subsubsection{Applications to local and narrow functions}
We recall that a subshift~$X$ belongs to~$\gen$ if there is a surjective factor map~$f:K\times A^E\to X$, where~$K$ is a countable~$\Z^d$-subshift and~$E$ a countable~$\Z^d$-set. We show that if~$X$ is weakly mixing, then one can get rid of~$K$ at the price of relaxing the property of~$f$ being a factor map. Let first define the relevant relaxation.

%The definition of~$\gen$ involves factor maps, which are cellular automata with possibly different input and output alphabets. We need to consider the following version of non-uniform cellular automata.
\begin{definition}[\ti{Local function}]
A function~$f:A^{\Z^d}\to B^{\Z^d}$ is a \textbf{local function of radius~$r$} if each~$p\in\Z^d$ is determined by~$\Nei(p,r)=\{q\in\Z^d:d(p,q)\leq r\}$.
\end{definition}
When~$A=B$, a local function is a non-uniform cellular automaton (Example \ref{ex_non-uniform_CA}).

We then state the announced result.

\begin{proposition}[\ti{Weak mixing vs countable unions}]\label{prop_wmix_local}
If a weakly mixing subshift belongs to~$\gen$, then it is the image of~$A^{\Z^d}\times \prod_{i=1}^kA^{\Z^d/H_i}$ under a local function, where~$A$ is a finite alphabet and each~$H_i$ is a non-trivial subgroup of rank at most~$d-1$.
\end{proposition}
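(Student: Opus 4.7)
The plan is to reduce to the canonical form from Proposition~\ref{prop_normal_form} and then eliminate the countable factor $K$ by invoking Proposition~\ref{prop_sigma_compact}. Concretely, by Proposition~\ref{prop_normal_form} there is a factor map $F:K\times Y\to X$, where $K$ is a countable $\Z^d$-subshift and $Y=A^{\Z^d}\times\prod_{i=1}^k A^{\Z^d/H_i}$ with the $H_i$ non-trivial of rank at most $d-1$. For each $k\in K$, define the slice $f_k:Y\to X$ by $f_k(y)=F(k,y)$. Writing $X=\bigcup_{k\in K}\im{f_k}$ already gives a countable union; the task is to show all $f_k$ lie in a single compact class of narrow (in fact local) functions so that weak mixing can collapse the union.

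Next I would establish a uniform radius. Using continuity of $F$, zero-dimensionality of $K$, and compactness, pick a finite clopen partition $\Gamma$ of $K$ and $r\in\N$ such that $F(k,y)_0$ is determined by $\Gamma(k)$ together with $y|_{Q_r}$. Shift-equivariance of $F$ then gives $F(k,y)_p=F(p\cdot k,\sigma^p y)_0$, so it depends on $\Gamma(p\cdot k)$ and $y|_{p+Q_r}$. Fixing $k$, this exhibits $f_k$ as a local function of radius $r$ (the rule at $p$ is $\rho_{p,k}(y|_{p+Q_r})$, where $\rho_{p,k}$ depends only on $\Gamma(p\cdot k)$, but may vary with $p$). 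Crucially, $r$ does not depend on $k$.

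Now let $\K_r$ denote the class of local functions of radius $r$ from $Y$ to $\Sigma^{\Z^d}$ (with $\Sigma$ the alphabet of $X$). Each element of $\K_r$ is specified by a choice, for each $p\in\Z^d$, of one of finitely many rules $Y_{|p+Q_r}\to\Sigma$, so $\K_r$ embeds as a closed subset of a product of finite sets and is compact in the compact-open topology. Setting $\C=\bigsqcup_{r\in\N}\K_r$, I would verify the hypothesis of Proposition~\ref{prop_sigma_compact}: for $f\in\K_r$ and $p\in\Z^d$, the function $g(y):=\sigma^p f(\sigma^{-p}y)$ satisfies $g(y)_q=f(\sigma^{-p}y)_{q+p}$, which depends on $y|_{q+Q_r}$, so $g\in\K_r$; and $\im{g}=\sigma^p\im{f}=\im{\sigma^p\circ f}$.

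Since $X$ is weakly mixing and $X=\bigcup_{k\in K}\im{f_k}$ is a countable union of images of functions in $\C$, Proposition~\ref{prop_sigma_compact} yields a single $f\in\C$ with $\im{f}=X$, which is the desired local function from $Y$ to $\Sigma^{\Z^d}$. The main obstacle is step two, extracting a uniform radius $r$ from a continuous shift-equivariant map whose domain contains the non-symbolic factor $K$; this is precisely where one exploits that $K$ is compact and zero-dimensional to replace continuous dependence on $k$ by dependence on the cell $\Gamma(k)$ of a finite clopen partition.
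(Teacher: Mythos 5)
Your proposal is correct and follows essentially the same route as the paper: slice the factor map $F:K\times Y\to X$ into local functions $f_k$ of a uniform radius $r$, observe that radius-$r$ local functions form a compact class stable (up to image) under post-composition with shifts, and apply Proposition~\ref{prop_sigma_compact} using weak mixing. Your step extracting the uniform radius via compactness (and the clopen partition of $K$) just makes explicit a detail the paper states without elaboration.
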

\begin{proof}
If~$X\in\gen$, then~$X$ is the image of~$K\times Y$ with~$Y=A^{\Z^d}\times \prod_{i=1}^kA^{\Z^d/H_i}$ under a factor map~$f:K\times Y\to X$. For each~$k\in K$, the function~$f_k:Y\to X$ sending~$y$ to~$f(k,y)$ is a local function of a certain radius~$r$. The set~$\K$ of local functions~$g:Y\to \Sigma^{\Z^d}$ of radius~$r$ is a compact class of functions. Moreover, if~$g\in \K$ and~$p\in\Z^d$, then~$\sigma^p\circ g\circ\sigma^{-p}\in\K$ and its image is~$\im{\sigma^p\circ g}$. As~$X$ is weakly mixing and~$X=\bigcup_{k\in K}\im{f_k}$, we can apply Proposition \ref{prop_sigma_compact}, implying that~$X$ is the image of a single~$g\in\K$.
\end{proof}

An analogous result holds for the class~$\nar$: as far as weakly mixing subshifts are concerned, one can get rid of countable unions in the definition of~$\nar$.
\begin{proposition}[\ti{Weak mixing vs countable unions}]\label{prop_wmix_narrow}
A weakly mixing subshift belongs to~$\nar$ if and only if it is the image of a narrow function.
\end{proposition}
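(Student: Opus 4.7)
The plan is to mirror the proof of Proposition \ref{prop_wmix_local}, replacing local functions by narrow functions throughout and invoking Proposition \ref{prop_sigma_compact}. One direction is immediate: any image of a single narrow function is already a countable union of images of narrow functions.

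For the nontrivial direction, given $X$ weakly mixing with $X=\bigcup_n\im{f_n}$ and $f_n:\{0,1\}^\N\to X$ being $r_n$-narrow, I would form, for each pair $(n,p)\in\N\times\Z^d$, the class $\K_{n,p}$ of $r_n$-narrow functions $h:\{0,1\}^\N\to\Sigma^{\Z^d}$ with $\W_h(q)\subseteq\W_{f_n}(p+q)$ for every $q$. Each such $\K_{n,p}$ is compact, since any $h$ in it is determined by the family of rules $\rho_q:\{0,1\}^{\W_{f_n}(p+q)}\to\Sigma$, which ranges over a product of finite sets. The countable union $\C=\bigcup_{n,p}\K_{n,p}$ exhibits $X$ as a countable union of images in $\C$, because each $f_n$ belongs to $\K_{n,0}$.

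To apply Proposition \ref{prop_sigma_compact}, I would verify its shift-invariance condition. For $h\in\K_{n,p}$ and $p'\in\Z^d$, the function $\sigma^{p'}\circ h$ is again $r_n$-narrow, with $\W_{\sigma^{p'}\circ h}(q)=\W_h(p'+q)\subseteq\W_{f_n}(p+p'+q)$, so it lands in $\K_{n,p+p'}$, with image $\im{\sigma^{p'}\circ h}$.

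The hardest step is matching the exact hypothesis of Proposition \ref{prop_sigma_compact}, which requests $g$ in the \emph{same} class as $h$ rather than merely in the enlarged family $\C$. To reconcile this, I would exploit the bijection $h\mapsto\sigma^{p'}\circ h$ between $\K_{n,p}$ and $\K_{n,p+p'}$, together with the fact that, $X$ being a subshift, its set of valid $Q_N$-patterns is position-invariant. This lets the compactness bound supplied by Proposition \ref{prop_compact_class} on one class be transported through the bijection onto its shifts, and the contradiction that drives the proof of Proposition \ref{prop_sigma_compact} goes through, delivering a single narrow function $g$ with $\im{g}=X$.
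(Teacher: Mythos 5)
Your setup is fine up to the step you yourself flag as the hardest one, and that is exactly where the argument breaks. The hypothesis of Proposition \ref{prop_sigma_compact} is used in its proof at one precise place: after weak mixing produces a shift $p$, one concludes that $X$ coincides with $\im{\sigma^{-p}\circ f_j}$ on $Q_N$, and the contradiction comes from the fact that $\sigma^{-p}\circ f_j$ has the same image as some $g$ in the \emph{same} compact class, for which the bound $N$ of Proposition \ref{prop_compact_class} was computed. With your classes, $\sigma^{-p}\circ f_n$ lands in $\K_{n,-p}$, while your bound is for $\K_{n,0}$. The proposed transport does not go through: for $h=\sigma^{p'}\circ g$ with $g\in\K_{n,0}$ one has $h(x)_q=g(x)_{p'+q}$, so the set of $Q_N$-restrictions of $\im{h}$ is the set of $(p'+Q_N)$-restrictions of $\im{g}$ (translated), and ``$X$ differs from $\im{h}$ on $Q_N$'' is equivalent to ``$X$ differs from $\im{g}$ on $p'+Q_N$''. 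The position-invariance of the language of $X$ is of no help here, because $\im{g}$ is \emph{not} shift-invariant: knowing that $X$ and $\im{g}$ have different $Q_N$-languages at the origin says nothing about the window $p'+Q_N$. Nor can you apply Proposition \ref{prop_compact_class} to each $\K_{n,p}$ separately, since the resulting bounds $N_{n,p}$ may be unbounded in $p$, whereas $N$ must be fixed \emph{before} weak mixing hands you $p$; and $\bigcup_{p}\K_{n,p}$ is not itself compact (there is no common modulus of uniform continuity, as the windows $\W_{f_n}(p+q)$ wander off as $p$ varies).

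The paper closes this gap with Lemma \ref{lem_narrow_compact}: by re-indexing the input coordinates so that the window of the $n$-th output position is forced into $[0,r(n+1))$, one obtains, for each alphabet $A$ and radius $r$, a \emph{single} compact class $\K_{A,r}$ whose images are exactly the images of \emph{all} $r$-narrow functions $A^\N\to\Sigma^{\Z^d}$. Since $\sigma^{p}\circ f$ is again $r$-narrow whenever $f$ is, every shift of a member of $\K_{A,r}$ has the same image as another member of $\K_{A,r}$, so the hypothesis of Proposition \ref{prop_sigma_compact} holds verbatim. Some such normalization of the input windows is the missing ingredient in your argument; without it the family $\{\K_{n,p}\}$ is not stable, class by class, under the shift on images.
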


We need the following result.
\begin{lemma}\label{lem_narrow_compact}
Let~$A,B,d$ be fixed. For each~$r$, there exists a compact class~$\K$ of functions whose images are exactly the images of~$r$-narrow functions~$f:A^\N\to B^{\Z^d}$.
\end{lemma}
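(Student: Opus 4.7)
The plan is to take $\K$ to be the class of $r$-narrow functions~$g:A^\N\to B^{\Z^d}$ whose input windows are constrained to lie in bounded intervals depending only on the output position. Fix an enumeration~$(p_i)_{i\in\N}$ of~$\Z^d$ and let~$\K$ consist of all~$r$-narrow~$g$ such that~$\W_g(p_i)\subseteq[0,(i+1)r)$ for every~$i$. Compactness of~$\K$ is immediate: each~$g\in\K$ is specified by the data~$(\W_i,\rho_i)_{i\in\N}$ with~$\W_i\subseteq[0,(i+1)r)$ of cardinality at most~$r$ and~$\rho_i:A^{\W_i}\to B$, and there are only finitely many choices for each index~$i$, so~$\K$ embeds as a closed subset of a countable product of finite sets. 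Equivalently,~$\K$ is closed in the compact-open topology and admits the uniform modulus of continuity~$n\mapsto nr$, so one can also invoke Arzel\`a--Ascoli.

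Every~$g\in\K$ is~$r$-narrow by construction, so the inclusion of~$\{\im{g}:g\in\K\}$ into the set of images of~$r$-narrow functions is trivial. The heart of the argument is the reverse inclusion: given any~$r$-narrow~$f:A^\N\to B^{\Z^d}$, I want to produce some~$g\in\K$ with~$\im{g}=\im{f}$.

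To this end, set~$W_i:=\W_f(p_i)$ and let~$\rho_i$ be the rule of~$f$ at~$p_i$. Build an injection~$\iota:\bigcup_{i\in\N}W_i\to\N$ by induction on~$i$: at step~$i$, the elements of~$\bigcup_{j<i}W_j$ have already been mapped into~$[0,ir)$, so the at most~$r$ elements of~$W_i$ not yet assigned can be sent to distinct unused slots in~$[ir,(i+1)r)$. Then define~$g:A^\N\to B^{\Z^d}$ by~$g(a)_{p_i}=\rho_i(\tilde a|_{W_i})$, where~$\tilde a_w:=a_{\iota(w)}$ for~$w\in\bigcup_i W_i$ (and arbitrary elsewhere). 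Then~$\W_g(p_i)\subseteq\iota(W_i)\subseteq[0,(i+1)r)$, so~$g\in\K$.

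The verification~$\im{g}=\im{f}$ is then short: given~$a$ with~$f(a)=b$, set~$c_{\iota(w)}:=a_w$ (arbitrary elsewhere) to obtain~$g(c)=b$; conversely, given~$c$ with~$g(c)=b$, set~$a_w:=c_{\iota(w)}$ to obtain~$f(a)=b$. The main subtlety---and the reason a naive approach of giving each output position a private copy of its window fails---is that the image of a narrow function depends not only on the individual rules but also on the overlap pattern of the windows, i.e.~on which coordinates of the input are shared between~$W_i$ and~$W_j$. Applying a single global injection~$\iota$ preserves exactly these coincidences and distinctnesses between window coordinates, which is precisely what is needed for~$\im{g}$ and~$\im{f}$ to coincide.
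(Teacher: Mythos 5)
Your proposal is correct and follows essentially the same route as the paper's proof: the same class $\K$ (windows of size at most $r$ squeezed into $[0,(i+1)r)$ along an enumeration of $\Z^d$), the same inductively built global injection relocating the union of the windows, and the same pull-back argument showing the images coincide. The only cosmetic difference is that you spell out the compactness of $\K$ (which the paper merely asserts); note that the parametrization by the data $(\W_i,\rho_i)_{i\in\N}$ is a continuous surjection from a compact product rather than an embedding, but that suffices for compactness.
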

\begin{proof}
Let~$(p_n)_{n\in\N}$ be a one-to-one enumeration of~$\Z^d$. Let~$\K$ be the set of functions~$f:A^\N\to B^{\Z^d}$ such that for each~$i$,~$p_n$ is determined by a region of size~$r$ contained in~$[0,r(n+1))$. This class is compact and each function in~$\K$ is~$r$-narrow. Moreover, every~$r$-narrow function~$f$ has the same image as some~$g\in\K$, obtained as follows. The idea is that we tighten the input positions as close to~$0$ as possible. We first let~$W=\bigcup_{p\in\Z^d}\W_f(p)$ and define an injective function~$h:W\to\N$. We start defining an injective function~$h:\W_f(p_0)\to [0,r)$, which is possible as~$|\W_f(p_0)|\leq r$. Inductively assume that~$h:\W_f(p_0)\cup\ldots\cup \W_f(p_n)\to [0,r(n+1))$ has been defined. We extend~$h$ by sending~$\W_f(p_{n+1})\setminus (\W_f(p_0)\cup\ldots\cup \W_f(p_n))$ to~$[r(n+1),r(n+2))$ in an injective way, which is possible as~$|\W_f(p_{n+1})|\leq r$.

Given~$x\in A^\N$, let~$y\in A^\N$ be defined by~$y(k)=x(h(k))$ and let~$g(x)=f(y)$. Note that~$g(x)(p_n)$ is determined by the values of~$y$ on~$\W_f(p)$, i.e.~by the values of~$x$ on~$h(\W_f(p_n))$ which has size at most~$r$ and is contained in~$[0,r(n+1))$, so~$g$ belongs to~$\K$. One has~$\im{g}\subseteq \im{f}$ because each~$g(x)$ is~$f(y)$ for some~$y$. Conversely, one has~$\im{f}\subseteq\im{g}$ because for each~$y$, define~$x(k)=y(h^{-1}(k))$ if~$k\in\im{h}$ and let~$x(k)$ be an arbitrary symbol otherwise: one has~$g(x)=f(y)$.
\end{proof}

\begin{proof}[Proof of Proposition \ref{prop_wmix_narrow}]
By Lemma \ref{lem_narrow_compact}, the narrow functions can be arranged as a countable union of compact classes~$\bigcup_{A,r}\K_{A,r}$ satisfying the assumptions of Proposition \ref{prop_sigma_compact}.
\end{proof}

%sssssssss
\subsection{Separating the classes}
We show that~$\nar$ is strictly larger than~$\gen$ (in the class of subshifts).
\begin{proposition}\label{prop_separation}
There exists a subshift in~$\nar\setminus \gen$.
\end{proposition}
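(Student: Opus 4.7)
The plan is to exhibit a specific subshift $X$ and verify both $X\in\nar$ and $X\notin\gen$. For the negative half I will invoke the obstruction proved at the start of Section~\ref{sec_properties}: every subshift in $\gen$ contains a strongly periodic configuration. Consequently it suffices to choose $X$ to be aperiodic (no periodic point whatsoever). Natural candidates are the classical uncountable minimal aperiodic subshifts, for example the orbit closure of a Sturmian sequence of irrational slope $\alpha$, a Toeplitz subshift built from a $p$-adic odometer coding, or the orbit closure of a fixed point of a primitive aperiodic substitution such as Fibonacci or Thue--Morse. For any such $X$ the periodic-shift and countable-subshift clauses of the canonical form of $\gen$ from Proposition~\ref{prop_normal_form} are both ruled out, so $X\notin\gen$.

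For the positive half $X\in\nar$ I will write $X=\bigcup_{n\in\N}\im{f_n}$ where each $f_n:A_n^\N\to X$ is $r_n$-narrow. The key flexibility provided by the definition of $\nar$ is that both the alphabet $A_n$ and the window bound $r_n$ are allowed to depend on $n$. The strategy is to stratify $X$ using a hierarchical or number-theoretic parameter dictated by the structure of the chosen subshift. For a Sturmian subshift of slope $\alpha$, for instance, I would encode a phase $\beta$ via an Ostrowski-type expansion in the continued-fraction convergents $q_k$ of $\alpha$; inside the $n$-th stratum, the symbol at any position $p$ with $|p|\le q_n$ depends on only $O(n)$ Ostrowski digits of $\beta$, which yields a narrow function with window bound $r_n=O(n)$ on that stratum, and the positions outside $[-q_n,q_n]$ are folded into deeper strata. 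For a substitutive example, the analogous role is played by the Bratteli--Vershik address in the substitution hierarchy. Taking the countable union over $n$ then covers $X$.

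The main obstacle is this positive half: constructing an explicit narrow encoding of an aperiodic subshift is delicate, because the naive positional (binary) encoding of the phase parameter makes every output symbol depend on every input bit, which fails narrowness. One must therefore tailor the encoding to the specific combinatorics of $X$. The countable union in the definition of $\nar$ is indispensable here: by Proposition~\ref{prop_wmix_narrow} it is redundant for weakly mixing subshifts, but all of the candidate aperiodic examples above have discrete spectrum and are therefore not weakly mixing, which is exactly what leaves room for them to separate $\nar$ from $\gen$.
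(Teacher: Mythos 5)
There is a genuine gap, and it is in the half you yourself identify as the main obstacle. Your negative half is fine in isolation: a subshift with no strongly periodic configuration cannot lie in $\gen$, by the proposition at the start of Section~\ref{sec_properties}. But the positive half is not established for any of your candidates, and your flagship example is actually refuted by this very paper: Section~\ref{sec_rotation} proves that the Sturmian/rotation subshift $X_\alpha$ does \emph{not} belong to $\nar$ (and the Fibonacci substitution subshift is Sturmian of golden-ratio slope, so it is ruled out too). The intuition stated there is exactly the one your Ostrowski scheme runs into: every cell needs to know the phase $\beta$, which is an infinite amount of shared information. Your sketch tries to evade this by saying that in the $n$-th stratum only positions $|p|\le q_n$ are handled and the rest are ``folded into deeper strata,'' but that is incompatible with the definition of $\nar$: each $f_n$ must output an \emph{entire} configuration, with \emph{every} output position $p\in\Z$ determined by at most $r_n$ input cells, and $X$ must be the union of the full images $\im{f_n}$. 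There is no mechanism for a single configuration to be assembled from several strata. So no example is actually shown to lie in $\nar\setminus\gen$, and it is unclear whether any aperiodic subshift belongs to $\nar$ at all.

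The paper takes a different and, in hindsight, necessary route: it builds a tailor-made one-dimensional subshift, the orbit closure of the image of the $1$-narrow map $x_0x_1x_2\ldots\mapsto \ldots x_2x_2x_2x_1x_1x_0x_1x_1x_2x_2x_2\ldots$ (each bit repeated with growing multiplicity). Membership in $\nar$ is then immediate ($X$ is a countable union of images of $1$-narrow and $0$-narrow functions), and all the work goes into $X\notin\gen$, via a Baire category argument combined with a gluing argument across the finite window of a local function. Note that this example \emph{does} contain strongly periodic configurations (the all-$0$ and all-$1$ configurations are in $\im{f}$), so the aperiodicity obstruction you propose could not have been used for it; conversely, any example to which your obstruction applies would require a genuinely new argument for membership in $\nar$, which is precisely what is missing from your proposal.
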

\begin{proof}

Let~$f:\{0,1\}^\N\to\{0,1\}^\Z$ send~$x=x_0x_1x_2\ldots$ to~$y=\ldots x_{2}^3x_{1}^2x_0^1x_1^2x_2^3\ldots$, where~$x_0$ is placed at position~$0$. Let~$X\subseteq \{0,1\}^\Z$ be the smallest subshift containing~$\im{f}$.

We first show that~$X\in\nar$. The function~$f$ is~$1$-narrow as each output position is determined by $1$ input position. $X$ is the union of the images of~$\sigma^k(\im{f})$ for~$k\in\Z$, and of the limit sequences~$\ldots000111\ldots$ and~$\ldots111000\ldots$. Therefore,~$X$ is a countable union of singletons and images of~$1$-narrow functions. Each singleton is the image of a~$0$-narrow function, so~$X\in\nar$.

Assume for a contradiction that~$X$ belongs to~$\gen$, i.e.~$X=\bigcup_{i\in\N}X_i$ where~$X_i$ is the image of a fullshift under a local function. Let~$\pi$ be the pattern~$010$ on~$[-1,1]$ and note that~$[\pi]\cap X=[\pi]\cap \im{f}$. By the Baire category theorem, there exist~$i\in\N$ and a valid finite pattern~$\pi'$ extending~$\pi$ such that~$[\pi']\cap X\subseteq X_i$. Let~$g:A^\Z\to X_i$ be a surjective local function, let~$W=[-k,k]$ its window. Extend~$\pi'$ to a valid pattern whose domain is~$[1-\frac{n(n+1)}{2},\frac{n(n+1)}{2}-1]$ for some~$n\geq 2k+2$. Let~$x=\ldots 000\pi'000\ldots$ and~$y=\ldots 111\pi'111\ldots$. One has~$x,y\in [\pi']\cap X\subseteq \im{g}$, let~$x_0,y_0\in A^\Z$ be pre-images by~$g$ of~$x,y$ respectively. Let~$z_0\in A^\Z$ coincide with~$x_0$ on~$(-\infty,\frac{n(n+1)}{2}+k]$ and with~$y_0$ on~$[\frac{(n+1)(n+2)}{2}-k,+\infty)$, which is possible as~$(\frac{(n+1)(n+2)}{2}-k)-(\frac{n(n+1)}{2}+k)=n+1-2k\geq 1$. Let~$z=g(z_0)\in [\pi']\cap X=[\pi']\cap \im{f}$. One has~$z(-\frac{(n+1)(n+2)}{2})=0$ and~$z(\frac{(n+1)(n+2)}{2})=1$, which contradicts the fact that~$z\in\im{f}$. We have reached a contradiction, so~$X\notin\gen$.
\end{proof}

%SSSSSS
\section{Ramified subshifts}\label{sec_obstructions}
This section contains one of the main results of the article, which is a technique that can be used to show that a two-dimensional subshift does not belong to~$\gen$. The idea is to identify an invariant satisfied by finite products of basic subshifts and preserved under factor maps.

%ssss
\subsection{Definition of ramifications}
We first introduce a definition which expresses the possibility of building a valid configuration from another one by inserting a pattern at a position and allowing modifications in a small neighborhood of the pattern.

If~$F\subseteq\Z^2$ and~$r\in\N$, then let~$\Nei(F,r)=\{p:\exists q\in F,d(p,q)\leq r\}$ be the~$r$-neighborhood of~$F$.
\begin{definition}[\ti{Graft}]
Let~$X$ be a~$\Z^2$-subshift,~$F\subseteq\Z^2$,~$\pi$ an~$F$-pattern,~$x\in X$ and~$r\in\N$. We say that~$\pi$ can be \textbf{$r$-grafted into~$x$ at position~$p\in\Z^2$} if there exists~$y\in X$ that coincides with~$x$ outside~$\Nei(p+F,r)$ and such that~$\pi$ appears at position~$p$ in~$y$, i.e.~$y(p+q)=\pi(q)$ for~$q\in F$.
\end{definition}
Observe that~$X$ is strongly irreducible if and only if there exists~$r$ such that for every finite region~$F$, every valid~$F$-pattern can be~$r$-grafted into every valid configuration (at any position, or equivalently at the origin).

The following condition about a configuration of a~$\Z^2$-subshift~$X$ roughly expresses that it contains patterns appearing at regularly located positions, which cannot be moved at certain other positions.

%\begin{itemize}
%\item For each~$r$ there exists an infinite set~$V_r\subseteq\Z^2$ of pairwise independent vectors and for each~$v\in V_r$ there exists an~$(r,v)$-obstruction in~$X$.
%%\item For each~$r$ there exists an infinite set~$V_r\subseteq\Z^2$ of pairwise independent vectors and for each~$v\in V_r$ there exists a valid configuration~$y$, a finite region~$F\subseteq\Z^2$ and a vector~$u$ independent of~$v$ such that for every~$\mu>0$ and every~$\lambda\in\Z$, the~$F$-pattern at position~$\lambda v+\mu u$ in~$y$ cannot be~$r$-grafted at position~$\lambda v$ in~$y$.%re is no valid configuration~$z$ that coincides with~$x$ outside~$N(F_\lambda,r)$ and with~$\Sigma^{nu}(y)$ on~$F_\lambda$.
%\end{itemize}

\begin{definition}[\ti{Ramification}]
Let~$X$ be a~$\Z^2$-subshift. A \textbf{ramification} is a configuration~$x\in X$ together with~$r\in\N,u,v\in\Z^2$ and~$F\subseteq\Z^2$ such that for all~$\lambda,\mu\in\Z$ with~$\mu>0$, the~$F$-pattern appearing at position~$\mu u+\lambda v$ in~$x$ cannot be~$r$-grafted into~$x$ at position~$\lambda v$. We say that~$x$ is an~$(r,v)$\textbf{-ramification}.
\end{definition}

Say that two vectors~$v_1,v_2\in\Z^2$ are \textbf{independent} if~$\lambda_ 1v_1+\lambda_2v_2=0$ implies~$\lambda_1=\lambda_2=0$, for~$\lambda_1,\lambda _2\in\Z$.
\begin{definition}[\ti{Ramified subshift}]
A~$\Z^2$-subshift~$X$ is \textbf{ramified} if for every~$r\in\N$ there exist infinitely many pairwise independent vectors~$v$ such that~$X$ admits a ramification of radius~$r$ and support~$v$.
\end{definition}
If~$r<r'$, then an~$(r',v)$-ramification is also an~$(r,v)$-ramification so, in order to show that~$X$ is ramified, it is sufficient to build a sequence~$(v_r)_{r\in\N}$ of pairwise independent vectors such that~$X$ admits an~$(r,v_r)$-ramification for each~$r$.

\begin{remark}[\ti{Subshift of a ramified subshift}]\label{rmk_obs_sub}
Let~$Y$ be a ramified subshift. If the ramifications of~$Y$ can all be taken in a subshift~$X\subsetneq Y$, then~$X$ is ramified as well. Indeed, if~$x\in X$ and a pattern cannot be~$r$-grafted into~$x$ w.r.t.~$Y$, then \emph{a fortiori} it cannot be~$r$-grafted into~$x$ w.r.t.~$X$.

For instance, if~$T$ is a Wang tileset and~$X_T$ has ramifications that can all be built using a proper subset of tiles~$R\subsetneq T$, then for each tileset~$S$ such that~$R\subseteq S\subseteq T$,~$X_S$ is ramified as well.
\end{remark}
%The~$F$-pattern in~$y$ at position~$\lambda v+nu$ cannot be~$r$-grafted in~$y$ at position~$\lambda v$.

Observe that if a subshift is ramified, then it is not strongly irreducible. Actually, showing the lack of strong irreducibility will always be the first step when building ramifications.

%ssssss
\subsection{Examples}
We illustrate the notion of ramification on a simple example.
%sss
\subsubsection{The triangles}\label{sec_triangles_obstructed}
Consider the 2-dimensional SFT~$X_\Delta\subseteq\{\whitetile,\blacktile\}^{\Z^2}$ induced by the following forbidden pattern:
\begin{center}
\includegraphics{Tiles/tiles-4}
\end{center}

A typical configuration is made of black triangles (as in Figure \ref{fig_triangles_not_SI}) distributed on a white background.% shown in Figure \ref{fig_triangles}.

%\begin{figure}[!ht]
%\centering
%\includegraphics{Tiles/tiles-3}
%\caption{The forbidden pattern}\label{fig_triangles}
%\end{figure}
We are going to show that~$X_\Delta$ is ramified. The first step is to prove that it is not strongly irreducible. 
\begin{proposition}
$X_\Delta$ is not strongly irreducible.
\end{proposition}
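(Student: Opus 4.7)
The plan is to exhibit, for each $n\in\N$, two regions $F,G\subseteq\Z^2$ with $d(F,G)\geq n$ together with valid patterns $\pi$ on $F$ and $\pi'$ on $G$ whose union $\pi\cup\pi'$ is not valid in $X_\Delta$. The intuition is that a long horizontal black segment forces the formation of a large triangular black region above it, so a white tile placed near the apex of that forced triangle cannot coexist with the segment.

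Concretely, for each $N\geq 1$, I would take $F=[0,2N]\times\{0\}$ and let $\pi$ put the black tile on every cell of $F$. I would then take $G=\{(N,N)\}$ and let $\pi'$ put the white tile on this single cell. The validity of $\pi$ is witnessed by the configuration $x$ that places the black tile on every $(i,k)$ with $0\leq k\leq N$ and $k\leq i\leq 2N-k$, and the white tile everywhere else: this is a black isoceles triangle with base $F$ and apex $(N,N)$, and one checks by direct inspection that the forbidden pattern of Section \ref{sec_triangles_obstructed} does not appear anywhere in $x$. The validity of $\pi'$ is immediate since the all-white configuration belongs to $X_\Delta$.

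To show $\pi\cup\pi'$ is not valid, I would prove by induction on $k=0,1,\ldots,N$ that in every valid configuration extending $\pi$, each cell of $[k,2N-k]\times\{k\}$ carries the black tile. The base case is $\pi$ itself. For the inductive step, the forbidden pattern of $X_\Delta$ is used in the following form: whenever three horizontally adjacent black tiles appear at positions $(i-1,k),(i,k),(i+1,k)$, the tile at $(i,k+1)$ must also be black, for otherwise the forbidden configuration would occur locally. Applying this at each interior position of row $k$ yields the inductive step. Reaching $k=N$ forces $(N,N)$ to be black, contradicting $\pi'$. Since $d(F,G)=N$ can be made arbitrarily large, no uniform distance $n$ renders all sufficiently separated regions independent, so $X_\Delta$ is not strongly irreducible.

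The main obstacle is identifying precisely what the forbidden pattern of the tileset enforces as a local propagation rule. Once one verifies that it realises the ``three-in-a-row forces a black cell above'' behaviour (or a symmetric variant depending on the orientation of the figure), the argument reduces to the straightforward induction above. If the exact propagation turns out to be geometrically different, one adjusts the orientation and shape of the forced triangle accordingly, but the overall structure of the proof -- a long black row incompatible with a single white cell at bounded distance -- remains the same.
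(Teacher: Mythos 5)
Your proof follows essentially the same route as the paper's: exhibit, for each $r$, a long all-black segment and a single white cell at distance $r$ that are correlated because the segment forces a black triangle whose tip reaches that cell, the forcing being established by a row-by-row (or column-by-column) induction using the forbidden pattern as a propagation rule. The one caveat is that your guessed rule (``three horizontally adjacent black tiles force the cell above the middle one'') corresponds to a forbidden pattern of diameter $2$, whereas the actual forbidden pattern of $X_\Delta$ has diameter $1$ and the paper's forced triangle grows rightward from a \emph{vertical} black column with its tip at the origin; as you anticipated, this only changes the orientation and shape of the segment, the forced triangle, and the validity witness, not the structure of the argument.
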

\begin{proof}
For each~$r\geq 1$, the cell~$(0,0)$ and the column~$\{(-r,k):0\leq k\leq r\}$ are not independent and their distance is~$r$. Indeed, coloring the column in black forces the whole triangle~$T_r=\{(x,y):-r\leq x\leq 0,0\leq y\leq r-x\}$, and in particular the cell~$(0,0)$, to be black as well (see Figure \ref{fig_triangles_not_SI}).

\begin{figure}[!ht]
\centering
\includegraphics{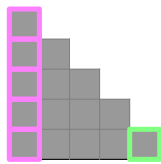}
\caption[Triangles are not strongly irreducible]{The triangle~$T_4$, illustrating that~$X$ is not strongly irreducible: the left column and the right cell are not independent.\qedhere}\label{fig_triangles_not_SI}
\end{figure}
\end{proof}

The black triangles can then be used as building blocks to define ramifications.
\begin{proposition}\label{prop_triangles_obst}
$X_\Delta$ is ramified.
\end{proposition}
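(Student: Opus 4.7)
As noted after the definition of ramified subshift, it suffices to build a sequence $(v_r)_{r \geq 1}$ of pairwise independent vectors such that $X_\Delta$ admits an $(r, v_r)$-ramification for each $r$. A convenient choice is $v_r = (r, 1)$, which are pairwise independent.

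The key tool is the propagation property underlying the preceding proposition: placing a column of $r+1$ consecutive black cells forces an entire trapezoidal region $T_r$ of blacks. For each $r$, I would fix $N \gg r$ and build a configuration $x_r \in X_\Delta$ which is $v_r$-periodic, containing translates of a large black triangle $T_N$ positioned at points $\mu u_r + \lambda v_r$ with $\mu \geq 1$ and $\lambda \in \Z$ (for some direction $u_r$ chosen so that the triangles do not overlap), while leaving a white ``corridor'' along the line $\{\lambda v_r : \lambda \in \Z\}$. Let $F_r$ be a small region located at the ``top'' of these triangles, so that the black pattern appearing on $F_r$ forces, by the propagation property, an entire column of $N$ blacks descending from $F_r$.

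For $\mu > 0$, the $F_r$-pattern at $\mu u_r + \lambda v_r$ in $x_r$ is precisely this forcing black pattern. To graft it at $\lambda v_r$ would produce a configuration $y \in X_\Delta$ exhibiting the same forcing pattern at $\lambda v_r$, hence a column of $N$ consecutive blacks descending from $\lambda v_r$ in $y$. Since $N > r$, this forced column extends outside $\Nei(\lambda v_r + F_r, r)$, where $y$ must agree with $x_r$---but $x_r$ is white there, in the corridor. The resulting contradiction rules out the graft and establishes the ramification.

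The main obstacle is the explicit geometric construction of $x_r$: the periodic lattice of triangles aligned with $v_r = (r, 1)$ must coexist with the white corridor without creating any forbidden pattern at their common boundary, and the triangle size $N$ and inter-triangle spacing must be calibrated against $r$. This is an elementary check exploiting the local nature of the forbidden pattern and the freedom to make $N$ and the inter-triangle spacing arbitrarily large; it is conceivable that the triangles have to be positioned slightly differently for each~$r$ (to account for the changing slope of~$v_r$), but the argument above goes through uniformly once the corridor and the column-forcing~$F_r$ are set up.
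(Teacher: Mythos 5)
Your overall strategy---build a $v_r$-periodic configuration out of black triangles and use the propagation property to show that a graft would force a conflict outside the modifiable neighbourhood---is the right one, but the central mechanism as you state it runs the propagation backwards, and this is a genuine gap. In $X_\Delta$ the forcing \emph{contracts}: a black column of height $k$ forces a region that shrinks column by column down to a single tip at distance about $k$ from it, and never anything larger than itself. Consequently a ``small region'' $F_r$ carrying a black pattern does not force a column of $N\gg r$ blacks descending from it; it forces at most a region of diameter comparable to that of $F_r$. With your setup the graft is therefore \emph{not} obstructed: whatever the small black pattern forces fits inside $\Nei(\lambda v_r+F_r,r)$, one fills that region in and leaves everything else white, obtaining a valid configuration that agrees with $x_r$ outside the neighbourhood. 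To obstruct a graft in your orientation (black pattern into the white corridor), $F_r$ would have to be a black column of height at least $r+2$, and the conflict would be the single forced tip at distance $r+1$---not a column of length $N$. A second problem is geometric: triangles $T_N$ with $N\gg r$ placed at every $\mu u_r+\lambda v_r$ cannot avoid overlapping, since consecutive positions in the $\lambda$-direction are only $|v_r|=r$ apart and the choice of $u_r$ has no influence on that spacing; the ``elementary check'' you defer is precisely where the construction breaks.

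The paper's proof avoids all of this by exchanging the roles of black and white. It places a black triangle $T_r$ at each position $\lambda v$ (with $v=(n,1)$, $n\geq r+2$, so that the triangles are pairwise separated by white cells), takes $F=\{(0,0)\}$ and $u=(0,-1)$, and observes that the cells $\lambda v+\mu u$ below the triangles are white while the cell $\lambda v$ is black and is \emph{forced} to be black by the part of the triangle that the graft is not allowed to modify. Grafting the single white cell into that tip is then immediately impossible; no large parameter $N$ has to be calibrated, and the validity of the periodic configuration reduces to the triangles being far enough apart that no forbidden pattern straddles two of them. If you wish to keep your orientation, you must at least replace the ``small'' $F_r$ by a column of height $r+2$, identify the conflict as the single forced tip cell, and shrink the triangles to size comparable to $r$ so that they fit between consecutive lattice points.
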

\begin{proof}
We build ramifications illustrated in Figure \ref{fig_triangles_obs}. Let~$r\in\N$ and~$V_r=\{(n,1):n\geq r+2\}$, which is an infinite set of pairwise independent vectors. Let~$F=\{(0,0)\}$ and~$u=(0,-1)$. For each~$r$ and each~$v\in V_r$, we build an~$(r,v)$-ramification~$x$ by putting a black triangle~$T_r$ at each multiple of~$v$, everything else being white. Precisely, for each~$\lambda\in\Z$, the cells in~$\lambda v+T_r$ are black, the other cells are white.

\begin{figure}[!ht]
\centering
\includegraphics{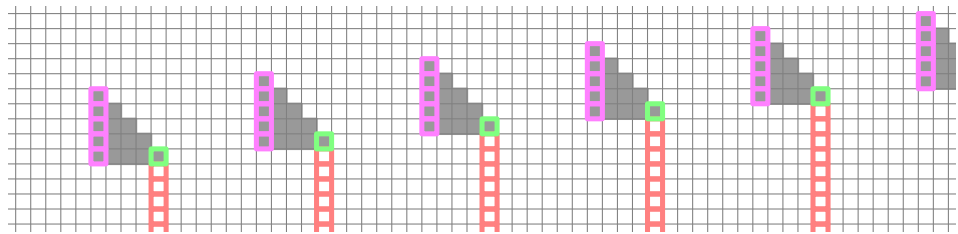}
\caption[Triangles are ramified]{A ramification: the cells~$\lambda v$ are in \green, the cells~$\lambda v+\mu u$,~$\mu>0$, are in \red. In a black triangle, the \purple\ and \green\ regions are correlated, and the content of a \red\ cell cannot be grafted in a \green\ cell without changing the \purple\ region.}\label{fig_triangles_obs}
\end{figure}

First,~$x$ is a valid configuration because the copies of~$T_r$ are separated by white cells, so they do not create forbidden patterns: if~$\lambda\neq \lambda'$ and~$s,t\in T_r$, then~$d(\lambda v+s,\lambda' v+t)=|(\lambda-\lambda')v+(s-t)|\geq |v|-\diam(T_r)\geq (r+2)-r=2$, so two cells from different triangles cannot belong to the forbidden pattern, which has diameter~$1$.

For~$\mu>0$,~$\mu u=(0,-\mu)\notin T_r$ so the cells~$\lambda v+\mu u$ are white. The white cell, seen as an~$F$-pattern, appears at position~$\lambda v+\mu u$ but cannot be~$r$-grafted in~$x$ at any position~$\lambda v$. We have shown that~$x$ is an~$(r,v)$-ramification, and that~$X$ is ramified.
\end{proof}

The argument to prove that a subshift~$X$ is ramified will always follow the same structure as in this example:
\begin{itemize}
\item First show that~$X$ is not strongly irreducible, by identifying for each~$r\in\N$ two regions~$F$ and~$G$ at distance larger than~$r$ from each other that are not independent. We do so by finding valid~$F$-patterns~$\pi_F,\pi'_F$ and a valid~$G$-pattern~$\pi_G$ such that~$\pi_F\cup\pi_G$ is valid but~$\pi'_F\cup\pi_G$ is not valid (in the pictures, $G$ will be colored in \purple\ and~$F$ will be colored in \green),
\item For a suitable choice of~$u$ and~$v$, build an~$(r,v)$-ramification of period~$v$ obtained by placing the pattern~$\pi_F\cup \pi_G$ at each position~$\lambda v$, the pattern~$\pi'_F$ at each position~$\lambda v+\mu u$ with~$\mu>0$ and filling all the other cells to obtain a valid configuration (the translated version of~$F$ at positions~$\lambda v+\mu u$, $\mu>0$, will be colored in \red).
\end{itemize}
%\begin{itemize}
%\item First show that~$X$ is not strongly irreducible, by identifying for each~$r\in\N$ a valid pattern~$\pi_r$ in which two subregions~$F$ and~$G$ at distance~$\geq r$ are not independent, and the~$G$-pattern forbids some valid~$F$-patterns (in the pictures, $G$ will be colored in \purple\ and~$F$ will be colored in \green),
%\item For a suitable choice of~$u$ and~$v$, build a~$(r,v)$-ramification of period~$v$ obtained by repeating the pattern~$\pi_r$ at each multiple of~$v$, filling all the other cells and making sure that the patterns appearing in the translates of~$F$ at positions~$\lambda v+\mu u$, with~$\mu>0$, are all incompatible with the~$G$-pattern appearing in~$\pi$ (the translated of~$F$ at positions~$\lambda v+\mu u$, $\mu>0$, will be colored in \red).
%\end{itemize}
In the pictures, a \red\ region cannot be grafted in a \green\ region without changing the \purple\ region.% In other words, the \purple\ and \green\ region are tied together, the \purple\ pattern forbids the \red\ pattern in the \green\ region.

%sss
\subsubsection{A non-example: the checkerboard}
In order to better understand the notion of ramification and ramified subshift, it is useful to examine a simple subshift which is \emph{not} ramified.

Consider the checkerboard subshift~$X\subseteq\{0,1\}^{\Z^2}$ containing only 2 configurations~$x,y$ defined by~$x_{(i,j)}=i+j\mod 2$ and~$y_{(i,j)}=i+j+1\mod 2$.

First,~$X$ is not strongly irreducible because for each~$r$, the cells~$(0,0)$ and~$(2r+1,0)$ are correlated: each cell can be filled with a~$0$, but not in a common configuration.

However,~$X$ is not ramified. Assume otherwise and let~$x$ be an~$(r,v)$-ramification for some~$r,v$, witnessed by~$F,u$. Choose some arbitrary~$\lambda$ and~$\mu>0$, where~$\mu$ is \emph{even} (for instance,~$\lambda=0,\mu=2$). The~$F$-pattern appearing in~$x$ at position~$\lambda v+\mu u$ coincides with the~$F$-pattern appearing in~$x$ at position~$\lambda v$, so it can be~$r$-grafted (and even~$0$-grafted) at that position, contradicting the definition of a ramification.

%ssssss
\subsection{An obstruction to being in \texorpdfstring{$\gen$}{L0}}
We state the main result of this section, which gives a way of showing that a subshift is not in~$\gen$, when that subshift is weakly mixing.
\begin{theorem}[\ti{An obstruction to being in $\gen$}]\label{thm_obs_C0}
Let~$X$ be a~$\Z^2$-subshift. If~$X$ is ramified and weakly mixing, then~$X\notin \gen$.
\end{theorem}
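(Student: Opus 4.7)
The plan is to prove the contrapositive: assume $X\in\gen$ and derive a contradiction by constructing a graft that the ramification forbids. Since $X$ is weakly mixing, Proposition \ref{prop_wmix_local} gives a surjective local function $g\colon Y\to X$ of some radius $r'$, where $Y=A^{\Z^2}\times\prod_{i=1}^k A^{\Z^2/H_i}$ and each $H_i=\Z w_i$ is a rank-$1$ subgroup. It is convenient to start instead from the factor-map formulation of Proposition \ref{prop_normal_form}, so that a single translation-invariant rule is available; a Baire category argument together with the fact that every countable subshift contains a strongly periodic configuration allows one to reduce to a single $k^*\in K$ having finite period lattice $\Lambda=n\Z^2$, and to a cylinder $[\pi^*]\subseteq f(\{k^*\}\times Y)$.

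The crucial structural observation is that any $r$-graft realized via $f(k^*,\cdot)$ must keep all coset components $y_i$ fixed: modifying any $y_i$ at even one coset would alter the output along an entire infinite line $c+H_i+\Nei(0,r')$, which cannot fit inside a finite window $\Nei(p+F,r)$. So realizable grafts are produced by modifying only $y_0$ in a bounded region.

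Choose $r\geq 2r'$ much larger than the $|w_i|$. Since $X$ is ramified, pick a pairwise independent vector $v$ (from the infinitely many available) that is transverse to every $w_i$, and use weak mixing to bring a ramification configuration $x$ with support $v$ into the Baire cylinder $[\pi^*]$, obtaining $(x,u,F)$ and $y\in Y$ with $f(k^*,y)=x$. Select $\mu>0$ such that $\mu u$ lies simultaneously in $\Lambda$ (so the rule is invariant under translation by $\mu u$) and in $\bigcap_{i=1}^{k} H_i$ (so the coset data at $\lambda v+F$ and $\mu u+\lambda v+F$ agree). Define $y'_0(\lambda v+p):=y_0(\mu u+\lambda v+p)$ for $p\in\Nei(F,r')$, leaving $y'_0=y_0$ elsewhere and $y'_i=y_i$. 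Then $f(k^*,y')$ coincides with $x$ outside $\Nei(\lambda v+F,r)$ and, for each $q\in F$,
\begin{equation*}
f(k^*,y')(\lambda v+q)=f(k^*,y)(\mu u+\lambda v+q)=x(\mu u+\lambda v+q),
\end{equation*}
using the two arithmetic conditions on $\mu u$. This is exactly the $r$-graft forbidden by the ramification, a contradiction.

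The main obstacle is guaranteeing the existence of $\mu$ satisfying both arithmetic conditions. Inclusion in $\Lambda$ is automatic once $\mu$ is a multiple of $n$, but requiring $\mu u\in\bigcap_i H_i$ imposes that $u$ be rationally parallel to the generator of $\bigcap_i H_i$, which need not hold for the initially chosen ramification. To cope with this I would exploit the fact that we have infinitely many pairwise independent ramification vectors $v$, each with its own $u_v$: a pigeonhole argument on how the finitely many subgroups $H_i$ interact with the directions of the $u_v$'s, combined with weak mixing to shift and combine patterns, should let us replace $v$ by another pairwise independent ramification vector whose $u_v$ is compatible with $\bigcap_i H_i$.
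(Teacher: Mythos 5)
Your overall strategy is the right one, and you have correctly located the crux: making the periodic components cooperate with the graft. But the fix you sketch for that crux does not work, and this is a genuine gap rather than a technicality. You require $\mu u\in\bigcap_{i}H_i$ so that the coset data at $\lambda v+F$ and at $\mu u+\lambda v+F$ literally coincide. When two of the periods $w_i$ are non-parallel, $\bigcap_i H_i=\{0\}$, so no positive multiple of any non-zero $u$ lies in that intersection; no pigeonhole over the (infinitely many) ramification directions $v$ and their associated $u_v$ can repair this, because the obstruction has nothing to do with which ramification you pick. The paper's proof circumvents exactly this point with a different idea: instead of forcing the periodic components to agree between $\lambda v$ and $\lambda v+\mu u$, it only asks (Lemma \ref{lem_line2}) for integers $\lambda_i$ and $\mu>0$ with $\mu u-\lambda_i v\in H_i$, i.e.\ $\mu u\equiv\lambda_i v$ modulo $H_i$ with a possibly different $\lambda_i$ for each $i$ --- a condition that is always satisfiable in $\Z^2$ when $v$ is independent of each period individually. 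It then applies Van der Waerden's theorem to the colouring $n\mapsto(F\text{-pattern at }nv)$ to find an arithmetic progression $(a+\lambda b)v$, $\lambda\in[L^-,L^+]$, along which all these $F$-patterns coincide; the pattern at $p=av+b\mu u$ can then be $0$-grafted at $p_0=av$ because for each $i$ the periodic component at $p$ agrees with the one at $p_i=(a+\lambda_i b)v$, which in turn agrees with the one at $p_0$ by monochromaticity. This Ramsey-theoretic detour is the missing ingredient.

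Two further steps of your reduction are also unjustified. First, the Baire category argument produces some $k\in K$ whose fibre covers a relative cylinder, but nothing forces that particular $k$ to be strongly periodic, so you cannot assume a finite period lattice $\Lambda$ for the rule; the paper instead keeps the rule non-uniform and uses Gallai--Witt's theorem (via the auxiliary notion of being \emph{finitely} ramified) to find arbitrarily large homothetic grids on which the rule is constant. Second, ``using weak mixing to bring a ramification configuration into the cylinder $[\pi^*]$'' would require modifying the ramification while preserving the grafting obstruction at \emph{all} positions $\lambda v+\mu u$, which weak mixing does not give you; the paper avoids this by first upgrading (Proposition \ref{prop_wmix_local}) to a single surjective local function defined on all of $A^{\Z^2}\times\prod_i A^{\Z^2/H_i}$, so that every ramification automatically has a preimage and no cylinder restriction is needed.
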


The rest of this section is devoted to the proof of this result.
%\begin{remark}\label{rmk_wmix}
%The weak mixing assumption cannot be dropped from the statement.
%
%When a subshift~$X$ is obstructed, the ramifications that are built to prove this fact induce a subshift~$Y\subseteq X$, defined as the smallest subshift containing these ramifications. $Y$ is obstructed as well by Remark \ref{rmk_obs_sub}. In many cases,~$Y$ is countable because the countable set of ramifications and their shifted versions only generate a countable set of limit configurations (it is the case in the proof of Proposition \ref{prop_triangles_obst}). In such a case,~$Y\in\gen$, which is possible because~$Y$ is not weakly mixing.
%\end{remark}

%ssssss
\subsubsection{Uniform case}
We first show that a ramified subshift cannot be a factor of a product of fullshifts and periodic shifts. To do so, we prove that finite products of fullshifts and periodic shifts are not ramified, and that factor maps send non-ramified subshifts to non-ramified subshifts.

The ``uniformity'' in the title of this section refers to the fact that factor maps are uniform local functions, with the same rule in each cell. In the next section, we will consider the case of arbitrary, non-uniform local functions. Strictly speaking, the results from this section are implied by the results in the next section, but we prefer to present both because the first ones are easier to understand.

\begin{theorem}\label{thm_obstr_uniform}
Let~$X$ be a~$\Z^2$-subshift. If~$X$ is ramified, then~$X$ not a factor of~$A^{\Z^2}\times \prod_{i=1}^kA^{\Z^2/u_i\Z}$, with each~$u_i\neq 0$. 
\end{theorem}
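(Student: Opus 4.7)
The plan is to prove the contrapositive: assume $X$ is a factor of $Y:=A^{\Z^2}\times\prod_{i=1}^k A^{\Z^2/u_i\Z}$ via a factor map $f:Y\to X$, and show $X$ is not ramified. By the Curtis--Hedlund--Lyndon theorem (Proposition~\ref{prop_hedlund}), $f$ is determined by a finite window $Q_N$ and local rule $\rho$. Fix $r\geq 2N$. I will show that any $(r,v)$-ramification of $X$ must have $v$ parallel to some $u_i$. Since the $u_i$'s determine at most $k$ one-dimensional sublattices of $\Z^2$, this bounds the number of pairwise independent $v$'s admitting an $(r,v)$-ramification by $k$, contradicting that $X$ is ramified.

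The core argument is a pigeonhole on preimages in $Y$. Given an $(r,v)$-ramification $(x,u,F)$ of $X$ with $v$ not parallel to any $u_i$, lift $x$ to a preimage $y=(y_0,y_1,\dots,y_k)\in Y$. Since each $y_i$ takes values in the finite alphabet $A$, the tuple $(y_i|_{\lambda v+F+Q_N})_{i=1}^{k}$ takes values in the finite set $\prod_i A^{F+Q_N}$ as $\lambda$ ranges over $\Z$; iterated pigeonhole yields an infinite set $S\subseteq\Z$ on which this tuple is constant, say equal to $(w_1,\dots,w_k)$. For any $\lambda_1,\lambda_2\in S$, modify $y_0$ on $\lambda_1v+F+Q_N$ to match $y_0|_{\lambda_2v+F+Q_N}$, leaving the other data of $y$ unchanged; since $r\geq 2N$ the resulting $y'\in Y$ satisfies $f(y')|_{\lambda_1v+F}=x|_{\lambda_2v+F}$ and $f(y')=x$ outside $\Nei(\lambda_1v+F,r)$. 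Hence the $F$-pattern at $\lambda_2v$ in $x$ is $r$-graftable at $\lambda_1v$, which means the achievable set of patterns at $\lambda_1 v$ contains $\{F\text{-pattern at }\lambda v:\lambda\in S\}$.

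To complete the contradiction I need to produce, for some $\lambda_1\in S$ and some $\mu>0$, a $\lambda_2\in S$ whose $F$-pattern in $x$ equals the $F$-pattern at $\mu u+\lambda_1v$: the ramification hypothesis at $(\lambda_1,\mu)$ is then violated. I would obtain this by a second pigeonhole on the finite set $A^F$, applied to the two-parameter family of $F$-patterns at $\mu u+\lambda v$ for $(\mu,\lambda)\in \N_{>0}\times S$. By shift-invariance of $X$ and the local form of $f$, translating the argument along $v$ by elements of $S$ generates many realizations of $x$ whose $y_i$-fibers coincide but whose fullshift coordinate $y_0$ varies freely; this variability must, by finite-alphabet pigeonhole, produce the required coincidence.

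The main obstacle I anticipate is precisely this second step. The naive $1$-dimensional pigeonhole along $u$ (matching the $F$-pattern at $\mu_2u+\lambda_1v$ to that at $\mu_1u+\lambda_1v$ for some $\mu_1<\mu_2$) does not in general yield a target of the form $\lambda_2v$ unless $u\parallel v$, because translating by $\mu_1u$ moves the target off the line $v\Z$. Overcoming this requires using $v\not\parallel u_i$ for all $i$ more essentially: one must show that the $u$-orbits of patterns in $x$ \emph{cannot permanently avoid} the finite set of $v$-aligned patterns $\{F\text{-pattern at }\lambda v:\lambda\in S\}$. I expect this to follow from a genuinely $2$-dimensional combinatorial pigeonhole argument — e.g.\ finding a monochromatic rectangle in the coloring $(\mu,\lambda)\mapsto F\text{-pattern at }\mu u+\lambda v$ and exploiting shift-invariance along $v\Z$ — and formalizing this combinatorial core is the technical heart of the proof.
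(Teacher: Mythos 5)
Your overall architecture is sound and close in spirit to the paper's: lift $x$ to a preimage $y=(y_0,\dots,y_k)$, arrange for the periodic coordinates to agree on translates of the window along the line $v\Z$, and then graft by modifying only the fullshift coordinate $y_0$. Your first pigeonhole and the resulting claim that for $\lambda_1,\lambda_2\in S$ the $F$-pattern at $\lambda_2 v$ is $r$-graftable at $\lambda_1 v$ are correct. But the step you defer as the ``technical heart'' is exactly where the content of the theorem lies, and the route you sketch for it will not work. You propose to find $\mu>0$ and $\lambda_1,\lambda_2\in S$ such that the $F$-pattern of $x$ at $\mu u+\lambda_1 v$ \emph{equals} the $F$-pattern at $\lambda_2 v$. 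No pigeonhole on the finite set $\Sigma^F$ can force this: the two families $\{F\text{-pattern at }\lambda v:\lambda\in S\}$ and $\{F\text{-pattern at }\mu u+\lambda v:\mu>0\}$ can be disjoint subsets of $\Sigma^F$ --- indeed in every ramification constructed in the paper they are (e.g.\ black tips versus white cells in the triangles). A monochromatic rectangle in the coloring $(\mu,\lambda)\mapsto F\text{-pattern at }\mu u+\lambda v$ only produces coincidences among positions with $\mu>0$ and never pins one of them to the line $\mu=0$, so it cannot close the argument either.

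The missing ingredient is arithmetic, not Ramsey-theoretic coincidence of output patterns. One should not try to match $F$-patterns of $x$; one should match the \emph{periodic coordinates of the lift} at a position of the form $av+b\mu u$ with those at $av$. The paper does this via a lemma asserting that, since $v$ is independent of each $u_i$, there exist $\mu>0$ and integers $\lambda_1,\dots,\lambda_k$ with $\mu u-\lambda_i v\in u_i\Z$ for every $i$ simultaneously. Then each $y_i$ takes the same value at $p=av+b\mu u$ as at $p_i=(a+\lambda_i b)v$, because $p-p_i=b(\mu u-\lambda_i v)\in u_i\Z$. It remains to make the lift's patterns at the finitely many positions $(a+\lambda_i b)v$ coincide with the one at $av$, and here your plain pigeonhole giving an arbitrary infinite set $S$ is also not in the right form: you need the matching positions to lie in an \emph{arithmetic progression} of common difference $b$, since the same dilation factor $b$ must multiply both the $u$-displacement and the $v$-displacements to preserve divisibility by $u_i$. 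This is why the paper invokes Van der Waerden's theorem rather than pigeonhole. With these two ingredients the graft at $av$ of the pattern at $av+b\mu u$ goes through by changing only $y_0$, contradicting the ramification at $(\lambda,\mu')=(a,b\mu)$. Without them, your proof has a genuine gap at its central step.
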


We are going to need the following lemma. In the plane, given lines~$L_0,\ldots,L_k$ going through the origin and a direction~$v$ which is not colinear to any~$L_i$, it is obvious that there exists a line supported by~$v$ intersecting each~$L_i$, which does not contain the origin. The next lemma states that it also holds when all the coefficients are integers.

\begin{lemma}\label{lem_line2}
Let~$u_0,u_1,\ldots,u_k\in\Z^2$, with~$k\geq 1$. Let~$v\in\Z^2$ be independent of each~$u_i$ individually. There exist~$\lambda_1, \lambda_2,\ldots,\lambda_k$ and~$\mu>0$ such that each~$\mu u_0-\lambda_iv$ is a multiple of~$u_i$.
%Moreover, given a number~$d>0$, we can make sure that~$|p-\lambda_iv|\geq d$ for all~$i$.
\end{lemma}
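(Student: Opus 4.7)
The plan is to reduce the problem to linear algebra over $\Q$ and then clear denominators. For each $i \in \{1,\ldots,k\}$, the hypothesis that $v$ is independent of $u_i$ in $\Z^2$ means that $\lambda v + \lambda' u_i = 0$ with $\lambda,\lambda'\in\Z$ forces $\lambda=\lambda'=0$; this immediately upgrades (by clearing denominators) to $\Q$-linear independence, so that $\{v, u_i\}$ is a $\Q$-basis of $\Q^2$. In particular $v \neq 0$ and each $u_i \neq 0$.

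Using each of these bases, I would write $u_0 \in \Z^2 \subseteq \Q^2$ uniquely as
\begin{equation*}
u_0 = a_i v + b_i u_i, \qquad a_i, b_i \in \Q.
\end{equation*}
Multiplying through by a positive integer $\mu$ gives $\mu u_0 = (\mu a_i) v + (\mu b_i) u_i$, so setting $\lambda_i := \mu a_i$ yields $\mu u_0 - \lambda_i v = (\mu b_i)\, u_i$. The task then reduces to choosing a single $\mu > 0$ for which $\mu a_i \in \Z$ and $\mu b_i \in \Z$ simultaneously for every $i = 1, \ldots, k$.

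Such a $\mu$ exists: take any positive common multiple of the (finitely many) denominators of the rationals $a_1,b_1,\ldots,a_k,b_k$, for instance their product. Then $\lambda_i = \mu a_i$ is an integer and $\mu u_0 - \lambda_i v = (\mu b_i)\, u_i$ is an integer multiple of $u_i$, which is exactly the required conclusion. I do not anticipate any substantive obstacle: the argument is a routine "change of basis, then clear denominators" manipulation, and the degenerate case $u_0 = 0$ is subsumed by the general proof (one gets $a_i = b_i = 0$ and any $\mu$ works).
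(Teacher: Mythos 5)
Your proof is correct. It differs from the paper's argument mainly in bookkeeping: the paper stays entirely inside $\Z$, first handling the case $k=1$ by noting that $u_0,u_1,v$ must satisfy a non-trivial integer relation $n_0u_0-n_1u_1+nv=0$ (since $\Z^2$ has rank $2$) and using the independence hypotheses to force $n_0,n_1\neq 0$, then combining the pairwise relations for general $k$ by taking $n_0=\prod_j n_0^j$ and rescaling each $n_i$ by $\prod_{j\neq i}n_0^j$. You instead pass to $\mathbb{Q}^2$, expand $u_0$ in each basis $\{v,u_i\}$, and clear denominators with a single common multiple. The two routes are the same idea (a pairwise rational dependence, unified by a common multiple), but yours avoids the $k=1$ versus $k\geq 2$ case split and the sign adjustment, and it makes visible that the independence of $v$ from $u_0$ itself is never used --- only its independence from $u_1,\ldots,u_k$ matters. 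One cosmetic caution if this were to be inserted into the paper: the macro \verb|\Q| is defined there as $\mathcal{Q}$, so the field of rationals should be written as $\mathbb{Q}$ explicitly.
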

\begin{proof}
We show that there exist non-zero integers~$n_0,\ldots,n_k$ such that the pairwise differences of~$n_iu_i$ are multiples of~$v$. If~$n_0<0$ then we replace each~$n_i$ by~$-n_i$. We then take~$\mu=n_0$ and~$\lambda_i$ such that~$n_0u_0-n_iu_i=\lambda_i v$.

First consider the case when~$k=1$. As~$\Z^2$ has rank~$2$ as a~$\Z$-module, there is a non-trivial combination~$n_0u_0-n_1u_1+nv=0$. As~$v$ is independent of each~$u_i$ individually, one has~$n_0\neq 0$ and~$n_1\neq 0$, and the difference of~$n_0u_0$ and~$n_1u_1$ is a multiple of~$v$.

Let us now consider the general case~$k\geq 2$. For each~$i$ from~$1$ to~$k$, the previous argument applied to~$u_0$ and~$u_i$ provides non-zero coefficients~$n^i_0$ and~$n^i_1$ such that~$n^i_0u_0-n^i_1u_i$ is a multiple of~$v$. Let~$n_0=\prod_j n^j_0$ and~$n_i=n^i_1\prod_{j\neq i}n^j_0$. All these coefficients are non-zero, and~$n_0u_0-n_iu_i=(n^i_0u_0-n^i_1u_i)\prod_{j\neq i}n^j_0$ is a multiple of~$v$.
%Finally, if~$d>0$ is given, then multiply~$p$ and each~$\lambda_i$ by~$d$, so that~$|dp-d\lambda_iv|=d[p-\lambda_iv|\geq d$.
\end{proof}

\begin{proposition}\label{prop_product_obstructed}
Let~$X=A^{\Z^2}\times \prod_{i=1}^kA^{\Z^d/u_i\Z}$ be a finite product of fullshifts and periodic shifts. $X$ is not ramified.
\end{proposition}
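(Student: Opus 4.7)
My plan is to show that every $(r,v)$-ramification of $X$ has its support vector $v$ rationally parallel to one of the $u_i$'s; this forces the set of pairwise independent support vectors admitting a ramification to have size at most $k$, regardless of $r$, which contradicts the ``infinitely many pairwise independent'' clause in the definition of ramified. I would fix $v$ independent of each $u_i$ individually and assume for contradiction that some $(x,u,F)$ realizes an $(r,v)$-ramification, with $x=(x_0,x_1,\ldots,x_k)\in A^{\Z^2}\times\prod_i A^{\Z^2/u_i\Z}$.

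The first step is to reduce the graft condition to the periodic components: any admissible modification $y=(y_0,\ldots,y_k)$ of $x$ supported in $\Nei(\lambda v+F,r)$ must satisfy $y_i=x_i$ identically on each periodic component, because each $u_i\Z$-orbit is infinite and therefore meets the complement of the bounded neighborhood, so combining the $u_i$-periodicities of $x_i$ and $y_i$ pins $y_i$ to $x_i$ along every orbit. Grafting is unconstrained in $x_0$, so the graft from $\lambda v+\mu u$ to $\lambda v$ succeeds if and only if $x_i(\lambda v+\mu u+q)=x_i(\lambda v+q)$ for every $q\in F$ and every $i\geq 1$. Applying Lemma \ref{lem_line2} with $u_0=u$ furnishes $\mu_0>0$ and $\lambda_1^0,\ldots,\lambda_k^0\in\Z$ with $\mu_0 u-\lambda_i^0 v\in u_i\Z$. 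Setting $\mu=m\mu_0$ and using $u_i$-periodicity of $x_i$, the graft condition rephrases as
\[
p_i(\lambda+m\lambda_i^0)=p_i(\lambda)\qquad\text{for every }i=1,\ldots,k,
\]
where $p_i(\lambda):=(x_i(\lambda v+q))_{q\in F}\in A^F$.

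It now suffices to produce $\lambda\in\Z$ and $m>0$ realizing this simultaneous equality, and this is where I expect the bulk of the work to lie. The tuple $\Phi(\lambda):=(p_i(\lambda))_i$ takes values in the finite set $\prod_i A^F$, so pigeonhole yields a value $c=(c_i)_i$ whose level set $E:=\Phi^{-1}(c)$ has positive upper density in $\Z$; since $E\subseteq p_i^{-1}(c_i)$ for every $i$, I only need $\lambda\in E$ and $m>0$ with $\lambda+m\lambda_i^0\in E$ for every $i$. This is precisely a polynomial Szemer\'edi configuration for the linear polynomials $m\mapsto m\lambda_i^0$, all vanishing at $0$, inside the positive-density set $E$; its existence is granted by the Bergelson--Leibman theorem. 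A more self-contained substitute is topological multiple recurrence on the compact $\Z^2$-orbit closure of $(p_1,\ldots,p_k)$ generated by the diagonal shift together with the twisted shift $(q_1,\ldots,q_k)\mapsto(\sigma^{\lambda_1^0}q_1,\ldots,\sigma^{\lambda_k^0}q_k)$, extracting a simultaneous return time with $m>0$ via Birkhoff recurrence in a minimal subsystem. The difficulty is genuinely multi-dimensional: componentwise pigeonhole easily provides periods of each $p_i$ individually, but coordinating a common step $m$ and a common base point $\lambda$ across all $k$ components at once is what forces the stronger recurrence input.
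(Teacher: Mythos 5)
Your argument is correct, and its skeleton coincides with the paper's: restrict to $v$ independent of each $u_i$, observe that the fullshift component is free while each periodic component of any admissible graft is pinned to $x_i$ (your opening reduction, which the paper performs when verifying $x'\in X$), invoke Lemma~\ref{lem_line2} to convert the displacement $\mu u$ into a displacement $m\lambda_i^0 v$ modulo $u_i$, and finally find a simultaneous coincidence $p_i(\lambda+m\lambda_i^0)=p_i(\lambda)$ for all $i$ with a common $\lambda$ and a common $m>0$. Where you diverge is the combinatorial engine for that last step. You pass to a positive-upper-density level set of $\Phi=(p_i)_i$ by pigeonhole and then invoke the Bergelson--Leibman theorem (or, equivalently here, the density statement for homothetic copies of $\{0,\lambda_1^0,\ldots,\lambda_k^0\}$, i.e.\ Szemer\'edi). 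The paper instead treats $\lambda\mapsto\Phi(\lambda)$ directly as a \emph{finite coloring} and applies Van der Waerden (Theorem~\ref{thm_van_der_waerden}): a monochromatic progression $\{a+\lambda b:\lambda\in[L^-,L^+]\}$ covering $0$ and all the $\lambda_i^0$ yields $\lambda=a$, $m=b$ at once. Since what you need is only a monochromatic homothetic copy for a finite coloring, the density detour is unnecessary and the coloring version suffices; your proof is correct but rests on a much heavier black box. One caution about your ``self-contained'' dynamical alternative: single-map Birkhoff recurrence for the twisted shift on the full $\Z^2$-orbit closure does not suffice, because the recurrent point may only be approximable by points of the form $S^aT^bP$, which shifts the base point by $b\lambda_i^0$ \emph{differently in each component}; what is actually needed is the Furstenberg--Weiss multiple Birkhoff recurrence theorem for the commuting maps $\sigma^{\lambda_i^0}$, which is precisely the topological-dynamics proof of Van der Waerden --- so that route collapses back onto the paper's. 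Finally, note that your diagnosis (``coordinating a common step and base point across components is the crux'') is exactly right, and that the degenerate case $k=0$ (a bare fullshift, strongly irreducible hence not ramified) should be disposed of separately since Lemma~\ref{lem_line2} requires $k\geq 1$.
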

\begin{proof}
Let~$X=\prod_{i=0}^kX_i$ where~$X_0$ is a fullshift and for~$i\geq 1$,~$X_i$ is a periodic shift of period~$u_i\neq 0$. A configuration~$x\in X$ is~$(x_0,\ldots,x_k)$ with each~$x_i\in X_i$. Let~$A$ be the finite alphabet such that~$X\subseteq A^{\Z^2}$.

Let~$v$ be independent of each~$u_i$ and~$x$ be a~$(0,v)$-ramification, with~$u,F$. We apply Lemma \ref{lem_line2}, giving~$\lambda_1,\ldots,\lambda_k$ and~$\mu>0$ such that~$\mu u-\lambda_iv$ is a multiple of~$u_i$. Let~$\lambda_0=0$,~$L^+=\max_{0\leq i\leq k}\lambda_i$, $L^-=\min_{0\leq i\leq k}\lambda_i$ and~$L=L^+-L^-+1$.

We define a coloring of~$[0,N]$ where~$N=N(A^F,L)$ is given by Van der Waerden's theorem (Theorem \ref{thm_van_der_waerden}). Assign to~$n$ the~$F$-pattern~$\pi_n\in A^F$ appearing at position~$nv$ in~$x$. By Van der Waerden's theorem, this coloring admits a monochromatic arithmetic progression of length~$L$, i.e.~there exist~$a$ and~$b>0$ such that all~$(a+\lambda b)$ have the same color,~$\lambda \in [L^-,L^+]$. Let~$\pi$ be the common~$F$-pattern appearing at these positions~$(a+\lambda b)v$. In particular, if~$p_i=(a+\lambda_i b)v$ and~$q\in F$ then
\begin{equation}\label{eq_same_pattern}
x(p_i+q)=\pi(q)
\end{equation}
does not depend on~$i$.

Let~$p=av+b\mu u$ and~$\xi$ be the~$F$-pattern appearing at position~$p$ in~$x$. We claim that~$\xi$ can be~$0$-grafted into~$x$ at position~$p_0$. In other words, if we define~$x'\in A^E$ by
\begin{equation*}
x'(p_0+q)=\begin{cases}
\xi(q)=x(p+q)&\text{if }q\in F,\\
x(p_0+q)&\text{otherwise,}
\end{cases}
\end{equation*}
then we show that~$x'$ belongs to~$X$. Note that~$x'_0\in X_0$ because~$X_0$ is the fullshift. We claim that for~$i\geq 1$, one has~$x'_i=x_i\in X_i$, from which it will follow that~$x'\in X$.

We use the fact that~$p-p_i=b(\mu u-\lambda _iv)$ which is a multiple of~$u_i$ and that~$x_i\in X_i$ is~$u_i$-periodic, so~$x_i(p+q)=x_i(p_i+q)$ for all~$q$.

If~$q\notin F$, then~$x'(p_0+q)=x(p_0+q)$ by definition of~$x'$. If~$q\in F$ and~$i\geq 1$, then
\begin{align*}
x'_i(p_0+q)&=x_i(p+q)&\text{by definition of $x'$},\\
&=x_i(p_i+q)&\text{as $p-p_i$ is a multiple of~$u_i$},\\
&=x_i(p_0+q)&\text{as $x(p_i+q)=\pi(q)=x(p_0+q)$ by \eqref{eq_same_pattern}.}
\end{align*}
As a result,~$x'_i=x_i\in X_i$ for~$i\geq 1$, so~$x\in X$.

We have shown that the~$F$-pattern appearing at position~$p=av+b\mu u$ in~$x$ can be~$0$-grafted into~$x$ at position~$p_0=av$, contradicting the assumption that~$x$ is a~$(0,v)$-ramification.
\end{proof}

Being ramified is indeed a conjugacy invariant, as shown by the next result.
\begin{proposition}\label{prop_factor_obstructed}
If~$Y$ is a factor of~$X$ and~$Y$ is ramified, then~$X$ is ramified as well.
\end{proposition}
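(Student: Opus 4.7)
The plan is to lift ramifications from $Y$ to $X$ along the factor map $f\colon X\to Y$, with a controlled inflation of both the radius and the witness region to absorb the locality radius of $f$.

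By the Curtis-Hedlund-Lyndon theorem (Proposition~\ref{prop_hedlund} with $\varphi=\id$), there exist $N\in\N$ and a rule $\rho$ such that $f(x)_p=\rho(x|_{p+Q_N})$ for every $p\in\Z^2$. Fix $r\in\N$; the goal is to exhibit infinitely many pairwise independent vectors $v$ for which $X$ admits an $(r,v)$-ramification. Apply the assumption that $Y$ is ramified with the enlarged radius $r+2N$: this yields infinitely many pairwise independent $v$'s for which $Y$ admits an $(r+2N,v)$-ramification $y$, witnessed by some $u\in\Z^2$ and $F\subseteq\Z^2$. Using surjectivity of $f$, lift $y$ to some $x\in X$ with $f(x)=y$, and set $F':=F+Q_N$. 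I claim that $x$ is an $(r,v)$-ramification of $X$ witnessed by $u$ and $F'$, which suffices since the supply and pairwise independence of the $v$'s transfer verbatim.

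To prove the claim, assume for contradiction that for some $\mu>0$ and $\lambda\in\Z$ the $F'$-pattern of $x$ at position $\mu u+\lambda v$ admits an $r$-graft at position $\lambda v$, realized by $x'\in X$, and set $y':=f(x')$. For $q\in F$, the window $\lambda v+q+Q_N$ is contained in $\lambda v+F'$, where $x'$ is by construction the translated copy of $x$; applying $\rho$ gives $y'(\lambda v+q)=y(\mu u+\lambda v+q)$, so the correct $F$-pattern of $y$ is transported to position $\lambda v$. Conversely, a direct triangle-inequality check shows that whenever $d(p,\lambda v+F)>r+2N$, every window $p+Q_N$ misses $\Nei(\lambda v+F',r)$, hence $x'$ and $x$ agree there and $y'(p)=y(p)$. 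Together these facts show that $y'$ realizes an $(r+2N)$-graft of the $F$-pattern of $y$ at $\mu u+\lambda v$ into $y$ at $\lambda v$, contradicting the ramification property of $y$.

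The main obstacle is only bookkeeping: one must choose $F'$ large enough that $x'|_{\lambda v+F'}$ determines $y'|_{\lambda v+F}$, and simultaneously inflate the radius by $2N$ so that perturbations confined to $\Nei(\lambda v+F',r)$ can only affect $y'$ inside $\Nei(\lambda v+F,r+2N)$. The symmetric choices $F'=F+Q_N$ and $r\mapsto r+2N$ reconcile both constraints, and the rest is immediate from locality of $f$.
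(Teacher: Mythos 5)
Your proof is correct and follows essentially the same route as the paper's: inflate the radius to $r+2N$, lift the ramification $y$ to a preimage $x$, enlarge the witness region to $F'=F+Q_N$ (the paper's $\Nei(F,k)$), and push a hypothetical $r$-graft of $x$ forward through $f$ to obtain a forbidden $(r+2N)$-graft of $y$. The only cosmetic difference is that you invoke Curtis--Hedlund--Lyndon explicitly to name the radius $N$.
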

\begin{proof}
Let~$f:X\to Y$ be a factor map and~$k\in\N$ be such that each~$p\in\Z^2$ is determined by its neighborhood~$\Nei(p,k)$. For~$r\in\N$, let~$v$ be such that~$Y$ admits an~$(r+2k,v)$-ramification~$y\in Y$, witnessed by~$u$ and~$F$. Let~$x\in X$ be such that~$f(x)=y$. We claim that~$x$ is an~$(r,v)$-ramification, witnessed by~$u$ and~$W:=\Nei(F,k)$.

Let~$\lambda$ and~$\mu>0$ be integers. Assume for a contradiction that the~$W$-pattern appearing at position~$\lambda v+\mu u$ in~$x$ can be grafted into~$x$ at position~$\lambda v$, let~$x'\in X$ be the result of this graft and~$y'=f(x')$. As~$x'$ coincides with~$x$ outside~$\Nei(\lambda v+W,r)=\Nei(\lambda v+F,r+k)$,~$y'$ coincides with~$y$ outside~$\Nei(\lambda v+F,r+2k)$. The~$F$-pattern appearing at position~$\lambda v$ in~$y'$ is the~$F$-pattern appearing at position~$\lambda v+\mu u$ in~$y$, so~$y'$ witnesses that this pattern can be~$(r+2k)$-grafted into~$y$, contradicting the assumption.

Given~$r\in\N$, there exist infinitely many vectors~$v$ such that~$Y$ admits an~$(r+2k,v)$-ramification, and~$X$ admits an~$(r,v)$-ramification for each such~$v$.
\end{proof}

These results imply Theorem \ref{thm_obstr_uniform}.

%ssss
\subsubsection{Non-uniform case}
Our next goal is to extend Theorem \ref{thm_obstr_uniform} to show that a ramified subshift cannot be the image of a product of fullshifts and periodic shifts by a local function. Local functions do not necessarily play well with ramifications, so we need to refine the invariant property of being ramified. The idea is that local functions are actually uniform on arbitrarily large regular grids, by Gallai-Witt's theorem (Theorem \ref{thm_gallai_witt}). Using this observation, we show that they preserve a finite version of ramifications. We start defining this finite version, which is unfortunately rather technical and non-intuitive, but makes the argument go through.

\begin{definition}
Let~$r\in\N$ and~$v\in\Z^2$. A subshift~$X$ is \textbf{finitely~$(r,v)$-ramified} if there exist~$u\in\Z^2$ and~$F\subseteq\Z^2$ such that for all~$K\in\N$, there exist~$x\in X$ and~$\beta>0$ such that for all~$\lambda\in[-K,K]$ and~$\mu\in [1,K]$, the~$F$-pattern appearing at position~$\lambda \beta v+\mu \beta u$ in~$x$ cannot be~$r$-grafted into~$x$ at position~$\lambda \beta v$.

A subshift~$X$ is \textbf{finitely ramified} if for all~$r\in\N$ and for infinitely many pairwise independent vectors~$v$,~$X$ is finitely~$(r,v)$-ramified.
\end{definition}

\begin{remark}[\ti{Ramified implies finitely ramified}]
If~$X$ admits an~$(r,v)$-ramification~$(x_0,u,F)$, then~$X$ is finitely~$(r,v)$-ramified. Indeed, given~$K\in\N$, take~$x=x_0$ and~$\beta=1$. Therefore, if~$X$ is ramified, then~$X$ is finitely ramified.
\end{remark}

\begin{theorem}\label{thm_obstr_non_uniform}
Let~$X$ be a~$\Z^2$-subshift. If~$X$ is finitely ramified, then~$X$ is not the image of any~$A^{\Z^2}\times \prod_{i=1}^k\Z^2/u_i\Z$ by a local function. 
\end{theorem}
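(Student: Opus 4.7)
I would extend Theorem \ref{thm_obstr_uniform} to accommodate a local function $f$ of radius $r_0$ by invoking Gallai-Witt's theorem (Theorem \ref{thm_gallai_witt}) to locate positions where the rules of $f$ behave uniformly, thereby reducing to the uniform case. Assume for contradiction that $X$ is finitely ramified and $X = f(Y)$ with $Y = A^{\Z^2} \times \prod_{i=1}^k A^{\Z^2/u_i \Z}$. Set $r = 2 r_0$, and using the hypothesis pick $v \in \Z^2$ independent of each $u_i$ such that $X$ is finitely $(r, v)$-ramified, with witnesses $u$ and $F$. Apply Lemma \ref{lem_line2} to produce $\lambda_1, \ldots, \lambda_k \in \Z$ and $\mu > 0$ such that $\mu u - \lambda_i v$ is a multiple of $u_i$ for each $i \geq 1$, and set $\lambda_0 = 0$. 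Since $f$ is local of radius $r_0$, its rules $\rho_p$ take values in a finite set, so the assignment $c_f(p) := (\rho_{p+q})_{q \in F}$ is a finite coloring of $\Z^2$.

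The plan is to apply Gallai-Witt in its finitary form to the finite set $T = \{\lambda_i v : 0 \leq i \leq k\} \cup \{\mu u\} \subseteq \Z^2$ combined with the $F$-pattern coloring of $x$, in order to extract aligned positions suitable for the uniform-case argument. Concretely, I would pick $K$ very large, obtain $x \in X$ and $\beta > 0$ from the finite ramification, and pick $z \in Y$ with $f(z) = x$. Then, combining 1D Van der Waerden along the target line $\beta v \Z$ (which matches $F$-patterns and rule patterns at the intermediaries) with Gallai-Witt for the off-line point, I would obtain $A \in [-K, K]$ and $b > 0$ with $b\mu \in [1, K]$ such that the positions $p_i := (A + b\lambda_i)\beta v$ for $i = 0, \ldots, k$ and $p := A\beta v + b\mu\beta u$ all share the same rule pattern, while the $F$-patterns of $x$ at $p_0, \ldots, p_k$ all equal a common $\pi \in A^F$.

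Given these positions, I would transpose the construction of Proposition \ref{prop_product_obstructed} to the $Y$-level: set $z'_i = z_i$ for $i \geq 1$ (preserving $u_i$-periodicity) and $z'_0(q) = z_0(q + p - p_0)$ for $q \in \Nei(p_0 + F, r_0)$, leaving $z'_0 = z_0$ elsewhere. The verification that $f(z')_{p_0 + q} = x_{p + q}$ for each $q \in F$ combines (a) the rule matching $\rho_{p_0 + q} = \rho_{p_i + q} = \rho_{p + q}$ from Gallai-Witt, (b) the $u_i$-periodicity of $z_i$ together with $p - p_i \in u_i\Z$ from Lemma \ref{lem_line2}, and (c) the coincidence of the $F$-patterns at $p_0, \ldots, p_k$, mirroring the computation in the uniform case. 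Since $z'$ differs from $z$ only inside $\Nei(p_0 + F, r_0)$ and $f$ has radius $r_0$, the image $f(z')$ agrees with $x$ outside $\Nei(p_0 + F, 2 r_0) = \Nei(p_0 + F, r)$, so $f(z')$ witnesses an $r$-graft of the source $F$-pattern at $p$ into $x$ at $p_0$, contradicting the finite $(r, v)$-ramification of $x$ since $A \in [-K, K]$ and $b\mu \in [1, K]$.

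I expect the main obstacle to be the combinatorial step of ensuring that the Gallai-Witt base lies on the target line $\beta v \Z$: a naive 2D application of Gallai-Witt to $T$ yields a monochromatic homothetic copy whose base need not lie on this line, yet the finite ramification only controls grafting when the target is of the form $\lambda \beta v$ with $\lambda \in [-K, K]$. Overcoming this seems to require a careful interleaving of 1D Van der Waerden on the target line (providing matching $F$- and rule-patterns at the intermediaries) with a density or pigeonhole argument for the matching rule pattern at the off-line position $p$, exploiting the fact that $K$ can be chosen much larger than the combinatorial bounds involved.
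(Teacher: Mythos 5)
Your proposal assembles the right ingredients (Gallai--Witt for rule uniformity, Van der Waerden for pattern matching, Lemma \ref{lem_line2}, and a graft lifted to the preimage level), and you have correctly located the critical difficulty: a monochromatic homothetic copy produced by Gallai--Witt has a basepoint and a dilation factor that have nothing to do with the line $\beta v\Z$ on which the finite ramification lives. But the ``careful interleaving of 1D Van der Waerden with a density or pigeonhole argument'' that you invoke to resolve this is a placeholder, not an argument: the off-line position $p$ is dictated by the pair $(A,b)$ that Van der Waerden outputs, nothing forces the rule of $f$ at $p$ to agree with the rules at the $p_i$, and pigeonholing over many progressions re-entangles the two Ramsey applications whose quantitative bounds you would then have to control simultaneously. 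This is a genuine gap, not a technicality. There is also a secondary flaw in your verification: for $i\geq 1$ you keep $z'_i=z_i$ and need $z_i(p_0+q+s)=z_i(p_i+q+s)$ for $|s|\leq r_0$, but matching the $F$-patterns of $x=f(z)$ at $p_0,\dots,p_k$ says nothing about the periodic components of the \emph{preimage} $z$ there; you would have to color by the $\Nei(F,r_0)$-patterns of $z$ instead.

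The paper avoids the alignment problem entirely by splitting the theorem into two propositions, so that the two Ramsey arguments never have to cooperate. First, finite ramification is pulled back through a local function $f$ of radius $k$: only the rules are colored, Gallai--Witt is applied to the cube $Q_N$ with $N=K\beta(|u|+|v|)$, and the resulting copy $a+\alpha Q_N$ is normalized by the two moves missing from your proposal --- the translation $a$ is killed by replacing $f$ with the conjugate $\sigma^a\circ f\circ\sigma^{-a}$ (still a radius-$k$ local function sending $X$ onto $Y$), and the dilation $\alpha$ is absorbed by taking the new scaling to be $\beta'=\alpha\beta$, which is exactly what the free parameter $\beta$ in the definition of finite ramification is designed for. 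The line is not forced into the monochromatic copy; the line is rescaled so that the relevant positions lie inside it. Second, and separately, the product $A^{\Z^2}\times\prod_i A^{\Z^2/u_i\Z}$ is shown not to be finitely ramified by the Van der Waerden computation you describe, applied directly to the product, where coloring by $F$-patterns genuinely captures all periodic components. If you adopt this decomposition, the rest of your argument goes through essentially as you wrote it.
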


This result will directly follow from the next results, which refine Proposition \ref{prop_product_obstructed} and \ref{prop_factor_obstructed}.

\begin{proposition}
Let~$X$ be a finite product of fullshifts and periodic shifts. $X$ is not finitely ramified.
\end{proposition}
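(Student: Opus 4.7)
The plan is to rerun the argument of Proposition~\ref{prop_product_obstructed} within the finite framework, the only essential change being that the combinatorial parameters produced by Van der Waerden's theorem must be forced to lie inside the window $[-K,K]\times[1,K]$ prescribed by the definition of finite ramification.

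First I would set up the contradiction: assume $X=A^{\Z^2}\times\prod_{i=1}^{k}A^{\Z^2/u_i\Z}$ is finitely ramified. Among the infinitely many pairwise independent $v$ witnessing finite $(0,v)$-ramification, only finitely many can fail to be individually independent from some fixed $u_i$, so I can pick $v$ independent of every $u_i$. Let $u,F$ be the associated witnesses. Applying Lemma~\ref{lem_line2} with $u_0:=u$ yields $\lambda_1,\ldots,\lambda_k\in\Z$ and $\mu_0>0$ with $\mu_0 u-\lambda_i v\in u_i\Z$. Set $\lambda_0=0$, $L^\pm:=\max/\min_i\lambda_i$, $L:=L^+-L^-+1$, and let $N:=N(A^F,L)$ be the Van der Waerden number. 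Choose $K$ large enough that both $K\geq\mu_0 N$ and $K\geq(1+\max(|L^-|,|L^+|))N$, and invoke finite ramification to get $x\in X$ and $\beta>0$ whose non-graft property holds for every $\lambda\in[-K,K]$ and $\mu\in[1,K]$.

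Next, I would color each $n\in\{0,\ldots,N-1\}$ by the $F$-pattern at $n\beta v$ in $x$. Van der Waerden yields $a\in[0,N-1]$ and $b\in[1,N-1]$ such that this pattern is constant along $a,a+b,\ldots,a+(L-1)b$; reindexing by $a':=a-L^- b$, the same pattern $\pi$ appears at every position $(a'+\lambda b)\beta v$ with $\lambda\in[L^-,L^+]$. Setting $p_0:=a'\beta v$, $p_i:=(a'+\lambda_i b)\beta v$, and $p:=a'\beta v+b\mu_0\beta u$, I would then replay the core computation from Proposition~\ref{prop_product_obstructed}: the difference $p-p_i=b\beta(\mu_0 u-\lambda_i v)$ is a multiple of $u_i$, so $u_i$-periodicity of $x_i$ combined with the identity $x(p_i+q)=\pi(q)=x(p_0+q)$ for $q\in F$ gives $x_i(p+q)=x_i(p_0+q)$ for every $q\in F$ and every $i\geq 1$; the fullshift component $x_0$ is free. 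Hence the $F$-pattern at $p$ can be $0$-grafted into $x$ at $p_0$.

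The final step is a pure range check: the pair $(\lambda,\mu)=(a',b\mu_0)$ must lie in $[-K,K]\times[1,K]$ for this graft to contradict the finite ramification hypothesis, and the bounds $|a'|\leq(1+\max(|L^-|,|L^+|))N\leq K$ together with $1\leq b\mu_0\leq\mu_0 N\leq K$ follow from the Van der Waerden bounds $a,b\leq N$ and our choice of $K$, yielding the desired contradiction. I expect the only delicate point to be this bookkeeping: Van der Waerden gives no finer control on $b$, so $K$ must be taken linear in $N$ with multiplicative constants depending on $\mu_0$ and $L^\pm$ rather than simply $K=N$, which is precisely why the definition of finite ramification allows the dilation $\beta$ and lets $K$ be chosen only after $u$ and $F$ are fixed.
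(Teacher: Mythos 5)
Your proposal is correct and follows essentially the same route as the paper: reduce to a $v$ independent of every $u_i$, apply Lemma~\ref{lem_line2}, color the positions $n\beta v$ by their $F$-patterns, extract a monochromatic progression via Van der Waerden, and check that the resulting $0$-graft parameters fall inside the $[-K,K]\times[1,K]$ window. The only cosmetic difference is your choice of $K$: the paper takes $K=N\mu$ and gets $|a'|\leq N$ for free because $\lambda_0=0$ makes $a'$ itself one of the terms of the progression (hence $a'\in[0,N]$), so your extra factor $1+\max(|L^-|,|L^+|)$ is safe but unnecessary.
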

\begin{proof}
If~$X$ is a fullshift, then it is strongly irreducible so it is not finitely ramified. We now assume that~$X=\prod_{i=0}^kX_i$ where~$k\geq 1$,~$X_0$ is a fullshift and for~$1\leq i\leq k$,~$X_i$ is a periodic shift of period~$u_i\neq 0$. A configuration~$x\in X$ is~$(x_0,\ldots,x_k)$ with each~$x_i\in X_i$. Let~$A$ be the finite alphabet such that~$X\subseteq A^{\Z^2}$.

Let~$v$ be independent of each~$u_i$ and such that~$X$ is finitely~$(0,v)$-ramified, using~$u,F$. Let~$u_0=u$. We apply Lemma \ref{lem_line2}, giving~$\lambda_1,\ldots,\lambda_k$ and~$\mu>0$ such that each~$\mu u-\lambda_iv$ is a multiple of~$u_i$. Let~$\lambda_0=0$,~$L^+=\max_i\lambda_i$, $L^-=\min_i\lambda_i$ and~$L=L^+-L^-+1$.

Let~$N=N(A^F,L)$ be given by Van der Waerden's theorem (Theorem \ref{thm_van_der_waerden}) and~$K=N\mu$. There is~$x\in X$ and~$\beta>0$ witnessing the finite~$(0,v)$-ramifications.

We define a coloring of~$[0,N]$. Assign to each~$n\in [0,N]$ the~$F$-pattern~$\pi_n\in A^F$ appearing at position~$n\beta v$ in~$x$. By the Van der Waerden's theorem, this coloring admits a monochromatic arithmetic progression of length~$L$, i.e.~there exist~$a$ and~$b\geq 1$ such that all~$(a+\lambda b)$ have the same color,~$\lambda \in [L^-,L^+]$ (in the trivial case when~$L=1$, we can take~$b=1$). Let~$\pi$ be the common~$F$-pattern appearing at these positions~$(a+\lambda b)\beta v$. In particular, if~$p_i=(a+\lambda_i b)\beta v$ and~$q\in F$ then~$\pi(q)=x(p_i+q)$ does not depend on~$i$.

Observe that~$a\in [-K,K]$ and~$b\mu\in [1,K]$: indeed,~$a=a+\lambda_0 b\in [0,N]$ and~$b\leq N/(L-1)\leq N$ (if~$L=1$ then~$b=1=N$).

Let~$p=a\beta v+b\mu \beta u$ and~$\xi$ be the~$F$-pattern appearing at position~$p$ in~$x$. We claim that~$\xi$ can be~$0$-grafted into~$x$ at position~$p_0$. In other words, if we define~$x'\in A^E$ by
\begin{equation*}
x'(p_0+q)=\begin{cases}
x(p+q)&\text{if }q\in F,\\
x(p_0+q)&\text{otherwise,}
\end{cases}
\end{equation*}
then~$x'$ belongs to~$X$. Note that~$x'_0\in X_0$ because~$X_0$ is the fullshift. We claim that for~$i\geq 1$, one has~$x'_i=x_i\in X_i$, from which it follows that~$x'\in X$.

We use the fact that~$p-p_i=b(\mu \beta u-\lambda _i \beta v)$ which is a multiple of~$u_i$, so~$x_i(p+q)=x_i(p_i+q)$ for all~$q$.

If~$q\notin F$, then~$x'(p_0+q)=x(p_0+q)$ by definition of~$x'$. If~$q\in F$ and~$i\geq 1$, then
\begin{align*}
x'_i(p_0+q)&=x_i(p+q)&\text{by definition of $x'$},\\
&=x_i(p_i+q)&\text{as $p-p_i$ is a multiple of~$u_i$},\\
&=x_i(p_0+q)&\text{as $x(p_i+q)=\pi(q)=x(p_0+q)$.}
\end{align*}
As a result,~$x'_i=x_i\in X_i$ for~$i\geq 1$, so~$x\in X$.

We have shown that the~$F$-pattern appearing at position~$p=a\beta v+b\mu \beta u$ in~$x$ can be~$0$-grafted into~$x$ at position~$p_0=a\beta v$.

One has~$|a|\leq K$ and~$|b\mu|\leq K$ so it contradicts the property of finite~$(0,v)$-ramifications.
\end{proof}

\begin{proposition}
Let~$X,Y$ be subshifts, such that~$Y$ is the image of~$X$ by a local function. If~$Y$ is ramified, then~$X$ is finitely ramified.
\end{proposition}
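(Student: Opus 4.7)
The plan is to refine the argument of Proposition \ref{prop_factor_obstructed} so that it tolerates a local function in place of a factor map. The obstacle is that a local function $f:X\to Y$ of radius $k$ does not commute with the shift: each cell $q$ applies its own rule $\hat\rho_q:A^{\Nei(0,k)}\to B$, and these rules vary across $\Z^2$. However, there are only finitely many possible rules, so Gallai-Witt's theorem (Theorem \ref{thm_gallai_witt}) can locate arbitrarily large regular sub-grids on which the rules are uniform, effectively restoring shift-equivariance on a controlled region.

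Fix $r$ and pick $v$ from the infinitely many for which $Y$ is finitely $(r+2k,v)$-ramified, with witnesses $u,F$. We aim to show that $X$ is finitely $(r,v)$-ramified with witnesses $u$ and $W:=\Nei(F,k)$. Given $K\in\N$, the plan is to interleave two existence results in a controlled order. First apply $Y$'s finite ramification with a large auxiliary parameter $K'$ to obtain $y_0\in Y$ and $\gamma>0$ such that the $F$-patterns at the sources $\lambda'\gamma v+\mu'\gamma u$ in $y_0$ are not $(r+2k)$-graftable at the bases $\lambda'\gamma v$, for $\lambda'\in[-K',K']$ and $\mu'\in[1,K']$. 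Then define the finite coloring $c^F(p):=(\hat\rho_{p+q})_{q\in F}$ of $\Z^2$, pull it back to a coloring $\tilde c(\lambda',\mu'):=c^F(\lambda'\gamma v+\mu'\gamma u)$ of the $\gamma$-grid, and apply Gallai-Witt to $\tilde c$ with the pattern $P:=[-K,K]\times[0,K]$. This produces an offset $(\lambda_0,\mu_0)$ and a scale $\beta'$ with $\tilde c$ constant on $(\lambda_0,\mu_0)+\beta' P$; by using the bounded form of Gallai-Witt, these parameters can be taken bounded by a quantity $M$ depending only on the number of colors and $|P|$, so a choice such as $K'=2MK$ forces the monochromatic sub-grid to lie within $y_0$'s ramified zone.

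Set $a:=\lambda_0\gamma v+\mu_0\gamma u$ and $\beta:=\beta'\gamma$. The positions $a+\beta(\lambda v+\mu u)$ for $(\lambda,\mu)\in P$ share a common rule tuple $(\hat\rho^q)_{q\in F}$. Let $y:=\sigma^{-\mu_0\gamma u}(y_0)$, pick a preimage $x_0\in f^{-1}(y)$, and set $x:=\sigma^a(x_0)$. A short bookkeeping computation identifies $y$'s $F$-patterns at $a+\beta\lambda v+\beta\mu u$ and $a+\beta\lambda v$ with the source and base $F$-patterns of $y_0$ at grid indices $(\lambda_0+\beta'\lambda,\beta'\mu)$ and $(\lambda_0+\beta'\lambda,0)$ respectively, both inside $y_0$'s ramified range; hence these patterns in $y$ remain ungraftable at each other. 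We then mirror the proof of Proposition \ref{prop_factor_obstructed}: a hypothetical $r$-graft in $x_0$ between the corresponding positions would push through $f$ to an $(r+2k)$-graft in $y$, since the uniformity of the rules $\hat\rho^q$ forces the new $F$-pattern at $a+\beta\lambda v$ in $f(x_0')$ to coincide with the source pattern at $a+\beta\lambda v+\beta\mu u$ in $y$. This contradicts $y$'s ungraftability. Shifting by $a$ finally transports the ungraftability of $x_0$ to the required positions $\beta\lambda v+\beta\mu u$ and $\beta\lambda v$ in $x$.

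The main subtlety is coordinating the two existence arguments. A naive application of Gallai-Witt directly on $\Z^2$ would produce a scale $\beta$ with no reason to be divisible by $\gamma$; applying it instead on the $\gamma$-pullback $\tilde c$ automatically forces $\beta=\beta'\gamma$, so the monochromatic lattice embeds into $y_0$'s ramified lattice. The residual translation $a$ is absorbed by the shift-invariance of $X$ and $Y$, and the bounded form of Gallai-Witt keeps $M$ independent of $\gamma$, which is what lets us fix $K'$ in advance.
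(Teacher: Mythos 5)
Your argument is correct and follows the same overall strategy as the paper's proof: color each cell by the local rule it applies (more precisely, by the tuple of rules over $F$), use Gallai--Witt to extract a large sub-grid on which this coloring is constant, and then push a hypothetical $r$-graft of a $W$-pattern through $f$ to obtain an $(r+2k)$-graft of an $F$-pattern in $Y$, exactly as in Proposition \ref{prop_factor_obstructed}. Where you genuinely diverge is in the coordination of the two existence statements, and your version is the more careful one. The paper first obtains $y$ and $\beta$ from the finite ramification of $Y$ (implicitly with parameter $K$), sets $N=K\beta(|u|+|v|)$, and applies Theorem \ref{thm_gallai_witt} to $Q_N$ to get a scale $\alpha$; but the contradiction it then derives lives at grid indices $(\alpha\lambda,\alpha\mu)$, which escape the range $[-K,K]\times[1,K]$ guaranteed by the choice of $y$ as soon as $\alpha>1$, and $\alpha$ cannot be bounded before $\beta$ is known when Gallai--Witt is applied to $Q_N$. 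Your fix --- pulling the coloring back along $(\lambda',\mu')\mapsto\lambda'\gamma v+\mu'\gamma u$ so that the homothety scale is automatically a multiple of $\gamma$, and using the finitary form of Gallai--Witt on the fixed pattern $[-K,K]\times[0,K]$ so that the offset and scale are bounded by an $M$ independent of $\gamma$, which lets you fix $K'=2MK$ in advance --- closes this quantifier-order issue cleanly. Two small remarks: the bounded form of Gallai--Witt is not stated in the appendix (only the infinite version is), so you should note that it follows from Theorem \ref{thm_gallai_witt} by the standard compactness argument; and your handling of the residual offset by shifting $x_0$ and $y_0$ is equivalent to the paper's device of conjugating $f$ by $\sigma^a$, so either works.
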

\begin{proof}
Let~$f:X\to Y$ of radius~$k$. Let~$r\in\N$ and~$v$ be such that~$Y$ has an~$(r+2k,v)$-ramification~$y$, witnessed by~$u$ and~$F$. We define~$W=\Nei(F,k)$, which is the region that determines~$F$, and show that~$X$ is finitely~$(r,v)$-ramified, witnessed by~$u$ and~$W$.

Let~$K\in\N$ and~$G_K=\{\lambda v+\mu u:\lambda \in [-K,K],\mu\in [1,K]\}$.

To each~$p\in\Z^2$ we associate the rule that determines the~$F$-pattern appearing at position~$p$ in~$f(x)$ from the~$W$-pattern appearing at position~$p$ in~$x$. If~$A$ and~$B$ are the finite alphabets underlying~$X$ and~$Y$ respectively, then a rule is an element of the finite set~$(B^F)^{A^W}$. By Gallai-Witt's theorem (Theorem \ref{thm_gallai_witt}), there exist~$a\in\Z^2$ and~$\alpha>0$ such that all the cells in~$a+\alpha G_K$ have the same rule. The shift-invariance of~$X$ and~$Y$ enables us to get rid of~$a$ as follows: the function~$g=\sigma^a\circ f\circ \sigma^{-a}$ is a surjective local function of radius~$k$, it surjectively sends~$X$ to~$Y$, and all the cells in~$\alpha G_K$ have the same rule.

Let~$x\in X$ be such that~$g(x)=y$, we show that~$x$ and~$\alpha$ satisfy the finite ramification property for~$X$.

Assume for a contradiction that for some~$\lambda \in [-K,K]$ and~$\mu\in [1,K]$, the~$W$-pattern appearing at position~$p=\lambda \alpha v+\mu\alpha u$ can be~$r$-grafted into~$x$ at position~$p_0=\lambda\alpha v$, let~$x'$ be the result of this~$r$-graft and define~$y'=g(x')$. Note that~$p,p_0\in \alpha G_K$ so the function~$g$ has the same rules at positions~$p$ and~$p_0$. This property and the fact that the~$W$-pattern at position~$p$ in~$x$ coincides with the~$W$-pattern at position~$p_0$ in~$x'$ imply that the~$F$-pattern~$\pi$ appearing at position~$p$ in~$y$ coincides with the~$F$-pattern at position~$p_0$ in~$y'$. As~$x'$ coincides with~$x$ outside~$\Nei(p_0+W,r)=\Nei(p_0+F,r+k)$ and~$g$ has radius~$k$,~$y'=g(x')$ coincides with~$y=g(x)$ outside~$\Nei(p_0+F,r+2k)$. As~$y'\in Y$, it shows that the~$\pi$ can be~$(r+2k)$-grafted into~$y$ at position~$p_0$, which contradicts the assumption that~$y$ is an~$(r+2k,v)$-ramification for~$Y$.

Finally, given~$r$ there are infinitely many pairwise independent vectors~$v$ such that~$Y$ has~$(r+2k,v)$-ramifications, and we have shown that~$X$ is finitely~$(r,v)$-ramified for each such~$(r,v)$. As a result,~$X$ is finitely ramified.
\end{proof}

We have all the ingredients to prove Theorem \ref{thm_obs_C0}.
\begin{proof}[Proof of Theorem \ref{thm_obs_C0}]
As~$X$ is ramified,~$X$ is not the image of a local function by Theorem \ref{thm_obstr_non_uniform}. As~$X$ is weakly mixing, it does not belong to~$\gen$ by Proposition \ref{prop_wmix_local}.
\end{proof}

%sss
\subsection{Applications}\label{sec_app_obstructions}
We are going to apply Theorem \ref{thm_obs_C0} to two subshifts.
%sss
\subsubsection{The triangles}\label{sec_triangles}
We start with the subshift~$X_\Delta$, already analyzed in Section \ref{sec_triangles_obstructed}.
\begin{theorem}
$X_\Delta$ is ramified and mixing, therefore it is not in~$\gen$.
\end{theorem}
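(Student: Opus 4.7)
The plan combines two ingredients: the ramification of~$X_\Delta$, already established in Proposition~\ref{prop_triangles_obst}, and the mixing property, which remains to be shown. Since mixing implies weakly mixing, Theorem~\ref{thm_obs_C0} will then immediately yield $X_\Delta\notin\gen$.

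For mixing, the plan is to establish the following closure property of valid patterns: for every~$m\in\N$ there exists~$N(m)\in\N$ such that every valid~$Q_m$-pattern~$\pi$ extends to a valid configuration agreeing with the uniformly white configuration outside~$Q_{N(m)}$. Once this is in hand, mixing follows by the standard argument: two patterns on regions of diameter at most~$m$ at distance at least~$2N(m)+1$ can each be independently extended to such a bounded-range configuration, and then glued together by filling the complement with white cells, producing a valid configuration realizing both patterns simultaneously. Note that this is consistent with the failure of strong irreducibility shown above, because~$N(m)$ is allowed to grow with~$m$.

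The closure property will be established from the geometric structure of valid configurations in~$X_\Delta$: the black cells appearing in~$\pi$ decompose into fragments of black triangles, and each fragment crossing the boundary of~$Q_m$ can be completed to a full triangle by adding~$O(m)$ further black cells whose positions are determined by the geometry of the fragment. Once all such completions are performed, the resulting black region sits inside a cube of side~$O(m)$, which can then be surrounded by a white frame giving the desired extension.

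The main obstacle I expect is to verify that no forbidden pattern appears at the interface between the completed black region and the all-white exterior. This will require a careful case analysis on the ways a partial triangle can meet the boundary of~$Q_m$, relying on the specific form of the forbidden pattern defining~$X_\Delta$; in particular, one has to ensure that the completed triangles, together with the surrounding whitespace, locally avoid the forbidden configuration even when several fragments are completed close to one another.
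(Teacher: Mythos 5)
Your overall architecture is the paper's: ramification is quoted from Proposition~\ref{prop_triangles_obst}, mixing is proved separately, and since mixing implies weak mixing, Theorem~\ref{thm_obs_C0} concludes. The difference lies entirely in how mixing is obtained. You work with cubes~$Q_m$, and a valid $Q_m$-pattern may contain truncated black triangles whose completion is forced outside~$Q_m$; hence your extra step of completing each fragment before surrounding everything with white, and the interface analysis you rightly identify as the delicate point. The paper sidesteps this by using its proposition on proving mixing with prescribed shapes: it takes the family of triangular regions $T_n=\{(x,y)\in\Z^2:x,y\geq 0,\ x+y\leq n\}$ (every cube is contained, up to translation, in some~$T_n$) and observes that a valid $T_n$-pattern already extends to a valid configuration by making \emph{all} other cells white --- no completion is needed, because the triangle is precisely the shape closed under the forcing induced by the forbidden pattern. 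Two such white-padded configurations at distance at least~$2$ then glue trivially, the forbidden pattern having diameter~$1$. Your route should go through: the forcing closure of the black cells of a valid $Q_m$-pattern is finite of size $O(m)$, by construction it creates no forbidden pattern against the white exterior, and two completed regions at distance at least~$2$ cannot both meet a diameter-$1$ forbidden pattern (your threshold $2N(m)+1$ needs a further additive $O(m)$ to account for recentring the two regions, but this is immaterial since mixing only requires some threshold for each~$m$). In short, your argument is correct but re-proves by hand what the choice of triangular regions gives for free; switching to the family $\{T_n\}$ makes your ``main obstacle'' disappear.
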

\begin{proof}
We have already seen that it is ramified (Proposition \ref{prop_triangles_obst}), let us prove that it is mixing. Let~$T_n=\{(x,y)\in\Z^2:x,y\geq 0,x+y\leq n\}$. Any valid~$T_n$-pattern can be extended to a valid configuration by making all the other cells white: one easily checks that it does not create a forbidden pattern. If~$\pi,\pi'$ are valid~$T_n$-patterns then for all~$p\in\Z^2$ such that~$|p|\geq \diam(T_n)+2=n+2$,~$\pi\cup\sigma^p(\pi')$ is valid because~$d(T_n,T_n-p)\geq 2$ so the extension of~$\pi\cup\sigma^p(\pi')$ with white cells is a valid configuration: the forbidden pattern has diameter~$1$, so it cannot intersect both~$T_n$ and~$T_n-p$.
\end{proof}

%sss
\subsubsection{Domino tilings}\label{sec_dominos}
We consider the \textbf{domino tileset} shown in Figure \ref{fig_domino}. It induces a very classical tiling which is a particular case of the dimer model in statistical mechanics. The problem of generating random configurations has been studied in \cite{JPS98,Propp03,JRV06} among others.
\begin{figure}[!ht]
\centering
\includegraphics{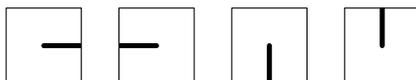}
\caption{The domino tileset}\label{fig_domino}
\end{figure}

\begin{theorem}
The domino subshift is ramified and mixing, so it does not belong to~$\gen$.
\end{theorem}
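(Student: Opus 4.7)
The plan is to apply Theorem~\ref{thm_obs_C0}: showing that the domino subshift $X_D$ is both mixing and ramified immediately gives $X_D \notin \gen$.

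Mixing follows by the same mechanism as for $X_\Delta$. Any valid $Q_{2m}$-pattern extends to a valid configuration by completing each row with horizontal dominoes in a chosen phase, with occasional short columns of vertical dominoes at the boundary of the pattern to reconcile parities. Two such patterns at sufficient distance can be jointly extended by filling the gap in the same way, so they are independent. Applying the $\F$-criterion of the mixing proposition with $\F$ the family of squares $Q_{2m}$ ($m \ge 1$) then yields mixing.

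For ramification, I will exhibit, for each radius $r$, an infinite family of pairwise independent vectors $v_k$ together with witnessing configurations $x_{v_k}$. A natural choice is $v_k = (2k+1, 2)$ for $k \ge r+1$: these have pairwise distinct slopes $2/(2k+1)$ and are therefore pairwise independent. The configuration $x_{v_k}$ is built around a diagonal array of vertical-domino ``anchors'' at the positions $\lambda v_k$, with the rest of the plane tiled by horizontal dominoes in a brick-wall pattern whose phase is dictated by the anchor positions. I will take $F$ to be a small rectangle centered at an anchor (large enough to capture sufficient context, rather than just the two cells of the anchor itself, so as to preclude the easy radius-$2$ ``anchor shift'' that a single vertical domino admits) and $u = (1, 0)$; the $F$-patterns at $\lambda v_k$ and at $\lambda v_k + \mu u$ then differ in a way that encodes a local incompatibility with the surrounding rigid brick wall.

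The main obstacle is the rigorous verification of non-graftability. Since every local modification of a domino tiling decomposes into a sequence of $2 \times 2$ flips, the substitution of the anchor pattern by the horizontal pattern at position $\lambda v_k$ forces a cascade of flips to propagate across the brick-wall filler until it meets the next anchor, a distance bounded below by a linear function of $k$; the odd-width parity of the filler strips (controlled by the spacing $2k+1$) is what prevents any short-cut that could close the cascade within radius $r$. This parity argument is the delicate combinatorial step, most cleanly phrased via the integer-valued height function of a domino tiling: every $2 \times 2$ flip shifts the height function by a fixed amount at a single dual vertex and leaves it unchanged elsewhere, so a modification confined to a disk of radius $r$ cannot produce the height discrepancy induced by the anchor-to-horizontal substitution. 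Once ramification is established in this way, Theorem~\ref{thm_obs_C0} yields $X_D \notin \gen$.
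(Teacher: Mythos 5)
Your overall strategy (mixing plus ramified, then Theorem~\ref{thm_obs_C0}) is the paper's, but your ramification construction does not work, and the height-function argument offered to justify it is incorrect. The difficulty is that your anchors have \emph{bounded} size, and a bounded defect in a sea of horizontal dominoes is locally mobile. If a vertical domino occupies the cells $(0,0),(0,1)$ and its neighbours in rows $0$ and $1$ are horizontal, the $3\times 2$ block $\{0,1,2\}\times\{0,1\}$ can be retiled so that the vertical domino sits at column $2$ instead of column $0$ (two elementary flips); iterating moves the defect by any even distance $2j$, with all changes confined to a $(2j+1)\times 2$ box inside rows $0,1$, and sliding it left rather than right realizes the opposite phase of the horizontal pairing on the vacated cells. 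Consequently, for any bounded rectangle $F$ around the anchor and any even $\mu$ with $\diam(F)<\mu<2k+1-\diam(F)$, the $F$-pattern appearing at $\lambda v_k+\mu u$ is purely horizontal, agrees with $x$ on every row of $\lambda v_k+F$ except the two anchor rows (those rows are $2$-periodic away from their distant anchors, and $\mu$ is even), and is obtained at $\lambda v_k$ by sliding the unique defect of the anchor rows out of $F$ in the appropriate direction --- an $r$-graft once $r$ exceeds roughly $\diam(F)+\mu$. Since the definition of a ramification requires non-graftability for \emph{all} $\mu>0$, your configurations are not ramifications. The height function does not rescue this: a $2\times 2$ flip changes the height by $\pm 4$ at a single dual vertex, the anchor-to-horizontal substitution only demands a bounded height discrepancy, and $O(1)$ flips inside the radius-$r$ neighbourhood realize it; no cascade has to reach the next anchor.

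What is missing is that the rigid witness must grow with $r$. The paper places at each multiple of $v=(r,r+2)$ a diamond of vertical dominoes of width $2r+1$: the lower half of such a diamond \emph{forces} its upper half (this is exactly the failure of strong irreducibility exhibited first), so the tile at the tip is determined by cells at distance $r+1$, which lie outside any radius-$r$ modification neighbourhood of the tip, and the cells $\lambda v+\mu u$ with $u=(-1,1)$ are horizontal and hence ungraftable there. It is this forcing, not flip-counting, that blocks the graft. Your mixing sketch is also thinner than needed: reconciling row parities between two horizontally separated patterns with ``occasional short columns of vertical dominoes'' runs into the fact that a vertical domino couples two rows and an odd-height column segment cannot be filled by vertical dominoes. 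The paper instead extends each diamond-shaped pattern to a bi-infinite vertical strip whose boundary vertical edges carry no crossing domino, fills the complement of the strips with vertical dominoes, and handles vertical separations by a $90$-degree rotation; some such device is needed to make your parity repair rigorous.
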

\begin{proof}
Let~$X$ be the domino subshift. First, $X$ is not strongly irreducible, as illustrated in Figure \ref{fig_domino_not_SI}. For~$k\in\N$, the horizontal line~$H_k=[-k,k]\times\{0\}$ filled with an alternation of \smalltile{0100} and \smalltile{0001} determines the content of the upper triangle~$\{(i,j):j\geq 1, |i|+j\leq k+1\}$, in particular its tip~$T_k=(0,k+1)$. Therefore,~$T_k$ and~$H_k$ are correlated and~$d(T_k,H_k)=k+1$ is arbitrarily large.
\begin{figure}[!ht]
\centering
\includegraphics{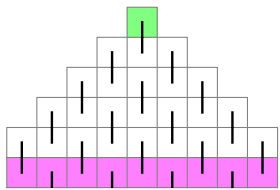}
\caption[Dominos are not strongly irreducible]{The domino subshift is not strongly irreducible: the \purple\ pattern determines the content of the \green\ cell.}\label{fig_domino_not_SI}
\end{figure}

Figure \ref{fig_domino_obstruction} shows a ramification. One can build an~$(r,v)$-ramification for any~$r$ and~$v=(r,r+2)$ by placing diamonds~$\{(i,j):|i|+|j+0.5|\leq r+0.5\}$ of width~$2r+1$ made of vertical tiles \smalltile{0100} and \smalltile{0001} with their tips at the multiples of~$v$, and filling each remaining horizontal half-line with the horizontal tiles \smalltile{0010} and \smalltile{1000}. The distance between the tip and the lower middle row of the diamond is~$r+1$. We take~$u=(-1,1)$ and~$F=\{(0,0)\}$. All the cells~$\lambda v+\mu u$, with~$\mu>0$, are filled with horizontal tiles, which cannot be~$r$-grafted at~$\lambda v$.
\begin{figure}[!ht]
\centering
\includegraphics{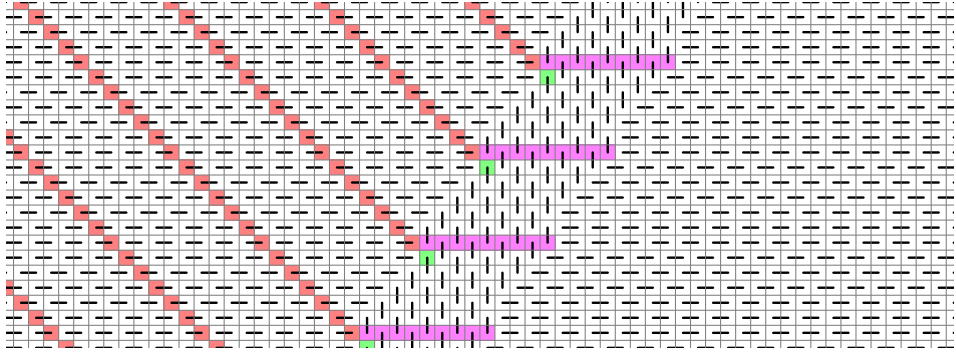}
\caption[Dominos are ramified]{An~$(r,v)$-ramification with~$r=4$ and~$v=(4,6)$,~$u=(-1,1)$ and~$F=\{(0,0)\}$}\label{fig_domino_obstruction}
\end{figure}

We finally prove that~$X$ is mixing. Let~$D_k$ be the diamond~$\{(i,j)\in\Z^2:|i|+|j|\leq k\}$. We show that if~$|p|\geq 2k+3$, then~$D_k$ and~$p+D_k$ are independent.

Let~$\pi$ be a valid~$D_k$-pattern. We first extend~$\pi$ to a~$D_{k+1}$-pattern as follows. Let~$(i,j)\in D_{k+1}\setminus D_k$. If~$i\geq 1$, then we do not use \smalltile{0010}; if~$i\leq -1$, then we do not use \smalltile{1000}; if~$i=0$ then we do not use both tiles. $\pi'$ has the property that any vertical edge on its boundary is white. We can therefore extend~$\pi'$ to a valid pattern~$\xi$ on~$[-k-1,k+1]\times \Z$ by using \smalltile{0100} and \smalltile{0001} only, in a deterministic way. The boundaries of~$\xi$ are white.

Let now~$p=(i,j)$ be such that~$|p|\geq 2k+3$, and let~$\pi,\pi'$ be valid patterns on~$D_k$ and~$p+D_k$ respectively. There are two cases:~$|i|\geq 2k+3$ or~$|j|\geq 2k+3$.

First assume that~$|i|\geq 2k+3$. One can extend~$\pi,\pi'$ to valid patterns~$\xi,\xi'$ on~$[-k-1,k+1]\times \Z$ and~$[i-k-1,i+k+1]\times \Z$ respectively, with white boundaries. They do not intersect as~$|i|\geq 2k+3$. The rest of the plane can be filled with \smalltile{0100} and \smalltile{0001}.

Now assume that~$|j|\geq 2k+3$. It reduces to the first case by applying a rotation by 90 degrees.
\end{proof}

%SSSSSSS
\section{Transitions}\label{sec_transitions}
We now develop a technique to prove that a subshift does not belong to~$\nar$. There are many subshifts in which typical configurations exhibit a transition between two regions that use different sets of tiles. We show that this type of subshift does not belong to~$\nar$.

%ssss
\subsection{An obstruction to being in \texorpdfstring{$\nar$}{L1}}
The simplest example of a subshift witnessing a transition is the one-dimensional SFT on the alphabet~$\{\raisebox{-.5mm}{\includegraphics{Tiles/tiles-7}},\raisebox{-.5mm}{\includegraphics{Tiles/tiles-8}}\}$ with forbidden pattern \raisebox{-.5mm}{\includegraphics{Tiles/tiles-6}}.

We denote by~$X_{\Zb}$ the induced~$\Z$-subshift, because it is conjugate to~$\Zb=\Z\cup \{-\infty,+\infty\}$ whose topology is generated by the basis of open sets~$\{n\}$,~$[n,+\infty]$ and~$[-\infty,n]$ for~$n\in\Z$, and with the continuous~$\Z$-action
\begin{align*}
p\cdot n&=p+n\text{ for finite $n$},\\
p\cdot (+\infty)&=+\infty,\\
p\cdot (-\infty)&=-\infty.
\end{align*}

The conjugacy map~$F:\Zb\to X_{\Zb}$ sends~$n\in\Zb$ to the configuration which is white on the cells~$p<-n$ and black on the cells~$p\geq -n$. In particular, it sends~$-\infty$ to the white configuration and~$+\infty$ to the black configuration.
%
%The simplest example of such a transition is obtained with the following 1D tileset:
%\begin{figure}[!ht]
%\centering
%\includegraphics{tilesets3-1}
%\caption{The tileset inducing~$\Zb$}
%\end{figure}
%
%We denote by~$X_{\Zb}$ the induced subshift, because it is conjugate to~$\Zb=\Z\cup \{-\infty,+\infty\}$ whose topology is generated by the basis of open sets~$\{n\}$,~$[n,+\infty]$ and~$[-\infty,n]$ for~$n\in\Z$, and with the continuous~$\Z$-action
%\begin{align*}
%p\cdot n&=p+n\text{ for finite $n$},\\
%p\cdot (+\infty)&=+\infty,\\
%p\cdot (-\infty)&=-\infty.
%\end{align*}
%
%The conjugacy map~$F:\Zb\to X_{\Zb}$ sends~$-\infty$ to the configuration with only \smalltile{1010},~$+\infty$ to the configuration with only \smalltile{0000} and~$n\in\Z$ to the configuration with the transition tile \smalltile{0010} at position~$-n$.

The next result shows in particular that~$X_{\Zb}$ is not the image of a narrow function. It relies of the following simple observation: given a continuous function, if two output regions do not communicate, i.e.~are determined by disjoint input regions, then they are independent. We recall that, once a continuous function~$f:A^\N\to B^E$ is fixed, to each output position~$p\in E$ is associated its input window~$\W_f(p)\subseteq\N$. More generally, to each output region~$F\subseteq E$ we associate its input window~$\W_f(F)=\bigcup_{p\in F}\W_f(p)$. The values of an input~$x\in A^\N$ on~$\W_f(F)$ determined the value of the output~$f(x)$ on~$F$.
\begin{lemma}
Let~$f:A^\N\to B^E$ be a narrow function and~$X$ be the image of~$f$. Let~$F,G\subseteq E$. If~$\W_f(F)\cap \W_f(G)=\emptyset$, then~$F$ and~$G$ are independent w.r.t.~$X$.
\end{lemma}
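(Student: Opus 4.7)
The plan is to prove the lemma by directly constructing, from two witnessing inputs, a single input whose image realizes both patterns simultaneously. The key point is that disjointness of the input windows lets us ``splice'' the two inputs without interfering with either output region.

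More precisely, I would start by fixing valid patterns $\pi \in B^F$ and $\pi' \in B^G$. By validity, each appears in some configuration of $X$, and since $X = \im{f}$, there exist $x, y \in A^\N$ with $f(x)|_F = \pi$ and $f(y)|_G = \pi'$. I would then define $z \in A^\N$ as follows: set $z|_{\W_f(F)} = x|_{\W_f(F)}$, set $z|_{\W_f(G)} = y|_{\W_f(G)}$, and choose arbitrary values on the complement $\N \setminus (\W_f(F) \cup \W_f(G))$. This definition is consistent precisely because $\W_f(F) \cap \W_f(G) = \emptyset$.

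The next step is to verify that $f(z)$ extends $\pi \cup \pi'$. For each $p \in F$, the window $\W_f(p) \subseteq \W_f(F)$ determines the output at $p$, and on this region $z$ coincides with $x$; hence $f(z)_p = f(x)_p = \pi(p)$. The analogous argument using $\W_f(G)$ and $y$ gives $f(z)|_G = \pi'$. Therefore $f(z) \in X$ witnesses that $\pi \cup \pi'$ is a valid $(F \cup G)$-pattern, which is exactly the definition of independence of $F$ and $G$.

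I do not expect a real obstacle here: the argument is essentially a one-line application of the definition of the input window, combined with the fact that two disjoint subsets of $\N$ can be fed independent data. The only subtlety worth noting is that $\W_f(F)$ need not be finite (narrowness only bounds $|\W_f(p)|$ for each individual $p$, not for unions), but this plays no role since we only need the windows to be disjoint, not finite, for the splicing to work.
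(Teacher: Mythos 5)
Your argument is correct and is essentially the paper's proof: the paper fixes window patterns $\xi_F,\xi_G$ on the disjoint sets $\W_f(F),\W_f(G)$ and combines them, which is the same splicing of preimages you describe. Your closing remark that the windows need only be disjoint, not finite, is accurate and harmless.
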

\begin{proof}
Let~$\pi_F,\pi_G$ be a valid patterns on~$F,G$ respectively. There exist a~$\W_f(F)$-pattern~$\xi_F$ and a~$\W_f(G)$-pattern~$\xi_G$ such that~$f([\xi_F])\subseteq [\pi_F]$ and~$f([\xi_G])\subseteq [\pi_G]$. As~$\W_f(F)$ and~$\W_f(G)$ are disjoint, there exists~$x\in A^\N$ extending~$\xi_F\cup \xi_G$, so~$f(x)$ extends~$\pi_F\cup \pi_G$ which is then valid. Therefore~$F$ and~$G$ are independent.
\end{proof}

\begin{proposition}\label{prop_zbar_narrow}
Let~$A$ be a finite alphabet. If~$f:A^\N\to X_{\Zb}$ is a narrow function, then its image is finite. Moreover, if~$f$ is~$r$-narrow, then
\begin{equation*}
|\im{f}|\leq |A|^\frac{r(r+1)}{2}.
\end{equation*}
In particular,~$X_{\Zb}$ is not the image of a narrow function.
\end{proposition}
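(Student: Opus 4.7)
The plan is to prove the bound $|\im{f}|\leq |A|^{r(r+1)/2}$ by induction on $r$; the final assertion follows immediately since $X_{\Zb}$ is infinite whereas every narrow function then has finite image. The base case $r=0$ is immediate, since a $0$-narrow function is constant.

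For the inductive step, if $|\im{f}|\leq 1$ there is nothing to do; otherwise, I would pick some position $p^{*}\in\Z$ at which both colors appear in $\im{f}$. Such a $p^{*}$ exists as soon as $\im{f}$ contains two distinct configurations $F(n_1),F(n_2)$: for instance one can take $p^{*}=-n_2$ when $n_2$ is finite, and handle the remaining small cases involving $\pm\infty$ separately. The key observation is that for any other output position $p$ where both colors also appear in $\im{f}$, the cells $\{p^{*}\}$ and $\{p\}$ are non-independent in $\im{f}$: in $X_{\Zb}$ the combination ``black at the smaller position, white at the larger'' is forbidden, hence forbidden in $\im{f}$, while each of the two colors remains individually valid at each of the two positions. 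The preceding lemma then delivers $\W_f(p^{*})\cap \W_f(p)\neq\emptyset$.

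Now fix $\xi\in A^{\W_f(p^{*})}$, identify $\N\setminus \W_f(p^{*})$ with $\N$, and set $f_\xi(y)=f(\xi\cup y)$. For every output position $p$, either $f(\cdot)(p)$ is constant, in which case $\W_{f_\xi}(p)=\emptyset$, or $f(\cdot)(p)$ is non-constant, in which case both colors appear at $p$ in $\im{f}$ and the intersection property above gives $\W_{f_\xi}(p)\subseteq \W_f(p)\setminus \W_f(p^{*})$, of size at most $r-1$. Hence $f_\xi$ is $(r-1)$-narrow, the inductive hypothesis supplies $|\im{f_\xi}|\leq |A|^{r(r-1)/2}$, and summing over the at most $|A|^{r}$ possible $\xi$ yields $|\im{f}|\leq |A|^{r}\cdot |A|^{r(r-1)/2}=|A|^{r(r+1)/2}$.

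The main subtlety I anticipate is keeping straight the distinction between non-independence in $X_{\Zb}$ and in $\im{f}$, since the lemma only produces window intersections from the latter; this forces restricting attention to output positions where both colors actually appear in the image. Fortunately, these are precisely the positions with non-empty input window, so positions missed by the intersection argument contribute nothing to the narrowness count of $f_\xi$ and the induction closes cleanly.
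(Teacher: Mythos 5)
Your proof is correct and follows essentially the same route as the paper's: induction on $r$, establishing that any two positions where the image is non-constant have intersecting input windows (via the forbidden black-then-white transition and the preceding lemma), then fixing the input on the window of one such position to obtain $(r-1)$-narrow restrictions $f_\xi$ whose images cover $\im{f}$. The only cosmetic difference is that the paper introduces the set $E$ of all non-constant positions and proves the correlation for every pair in $E$, whereas you only need pairs involving your chosen $p^{*}$; the counting and conclusion are identical.
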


\begin{proof}
We prove the result by induction on~$r$. Let~$b(r)=|A|^{\frac{r(r+1)}{2}}$. For~$r=0$, a~$0$-narrow function is contant, so~$\im{f}=1=b(0)$.

Let~$r\geq 1$ and assume by induction that the result holds for~$r-1$. Let~$f:A^\N\to X_{\Zb}$ and let~$E=\{p\in\Z:\exists x,y\in \im{f},x(p)\neq y(p)\}$ be the set of cells to which~$f$ assigns several values. If~$E=\emptyset$, then~$f$ is constant so~$|\im{f}|=1\leq b(r)$. Assume from now one that~$E\neq\emptyset$.

Let us show that for every pair of distinct elements~$p,q\in E$, the cells~$p$ and~$q$ are correlated in~$\im{f}$. We can assume w.l.o.g.~that~$p<q$. As~$p,q\in E$, there exist~$x,y\in \im{f}$ such that~$x(p)=\raisebox{-.5mm}{\includegraphics{Tiles/tiles-8}}$ and~$y(q)=\raisebox{-.5mm}{\includegraphics{Tiles/tiles-7}}$. However, there is no~$z\in \im{f}$ such that~$z(p)=x(p)$ and~$z(q)=y(q)$. We have shown that~$p$ and~$q$ are correlated in~$\im{f}$. Therefore, the input windows~$\W_f(p)$ and~$\W_f(q)$ of~$p$ and~$q$ w.r.t.~$f$ must have non-empty intersection. Note that for~$p\notin E$,~$\W_f(p)=\emptyset$ because~$x(p)$ is constant for~$x\in \im{f}$.

Fix some~$p\in E$ and let~$\pi$ be an input~$\W_f(p)$-pattern. We define~$f_\pi:A^{\N}\to\Zb$ by~$f_\pi(x)=f(x')$ where~$x'$ is obtained from~$x$ by replacing the content of~$\W_f(p)$ by~$\pi$. The function~$f_\pi$ is~$(r-1)$-narrow. Indeed, every output cell~$q$ is determined by~$\W_f(q)\setminus \W_f(p)$, whose size is at most~$r-1$ as it is either empty, or strictly contained in~$\W_f(q)$ which has size at most~$r$.

Therefore, by induction hypothesis,~$\im{f_\pi}\leq b(r-1)$. One has~$\im{f}=\bigcup_\pi\im{f_\pi}$, and there are~$|A|^{|\W_f(p)|}\leq |A|^r$ many~$\W_f(p)$-patterns~$\pi$, so~$|\im{f}|\leq |A|^rb(r-1)=b(r)$.
\end{proof}

We now state the main technique that will be used to show that a subshift is not in~$\nar$, and which is a direct consequence of the previous results.

\begin{theorem}[\ti{An obstruction to being in~$\nar$}]\label{thm_notin_nar}
Let~$X$ be a~$\Z^d$-subshift. If~$X$ is weakly mixing and~$\Zb$ is a weak factor of~$X$, then~$X\notin\nar$.
\end{theorem}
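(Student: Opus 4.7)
The plan is to combine three ingredients established earlier in the paper: Proposition \ref{prop_wmix_narrow}, which says that a weakly mixing subshift in $\nar$ is actually the image of a \emph{single} narrow function; the extension of the Curtis-Hedlund-Lyndon theorem (Proposition \ref{prop_hedlund}), which guarantees that every weak factor map is narrow; and Proposition \ref{prop_zbar_narrow}, which bounds the image of a narrow function into $X_{\Zb}$ and in particular rules out surjective narrow functions onto $X_{\Zb}$.

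I would argue by contradiction. Assume $X\in\nar$. Since $X$ is weakly mixing, Proposition \ref{prop_wmix_narrow} produces a narrow function $f:A^\N\to X$ whose image is all of $X$. Let $g:X\to X_{\Zb}$ be a weak factor map witnessing that $\Zb$ is a weak factor of $X$ (after composing with the conjugacy between $\Zb$ and $X_{\Zb}$). By Proposition \ref{prop_hedlund}, $g$ is given by a local rule over a window of bounded diameter $Q_N$, so $g$ is narrow in the sense of the paper. Since the composition of narrow functions is narrow, $g\circ f:A^\N\to X_{\Zb}$ is narrow, and surjectivity of $g$ forces its image to equal $X_{\Zb}$. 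This directly contradicts Proposition \ref{prop_zbar_narrow}, yielding $X\notin\nar$.

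The only subtle point, which is exactly why the statement is phrased this way, is that $X_{\Zb}$ itself is \emph{not} weakly mixing (it is not even transitive, since two patterns of the form ``whites followed by blacks'' cannot be glued in a common configuration after a large shift), so one cannot conclude by directly invoking Proposition \ref{prop_wmix_narrow} on $X_{\Zb}$ via closure of $\nar$ under weak factors. The weak mixing of $X$ is used specifically at the upstream end to collapse the countable union in the definition of $\nar$ into a single narrow function, and only afterwards is the weak factor map applied to transport the obstruction down to $X_{\Zb}$. No genuine combinatorial work remains beyond assembling these three previously proved results.
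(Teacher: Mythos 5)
Your proof is correct and follows exactly the paper's argument: use weak mixing together with Proposition \ref{prop_wmix_narrow} to reduce to a single narrow function onto $X$, then compose with the weak factor map (which is narrow) to obtain a narrow surjection onto $X_{\Zb}$, contradicting Proposition \ref{prop_zbar_narrow}. Your closing remark about why $X_{\Zb}$ itself cannot be handled directly by Proposition \ref{prop_wmix_narrow} is a correct and useful observation, but the proof itself is the same as the paper's.
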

\begin{proof}
As~$X$ is weakly mixing we can apply Proposition \ref{prop_wmix_narrow}, so we only need to show that~$X$ is not the image of a narrow function. By assumption, there exists weak factor map~$F:X\to\Zb$, which is in particular a narrow function. If~$X$ was the image of a narrow function~$G:A^\N\to X$, then~$\Zb$ would be the image of the narrow function~$F\circ G$, contradicting Proposition \ref{prop_zbar_narrow}.
\end{proof}

%ssssss
\subsection{Application}\label{sec_app_transitions}
The simplest subshift to which Theorem \ref{thm_notin_nar} can be applied is the two-dimensional version of the SFT discussed in the previous section (note that the one-dimensional version is countable, so it belongs to~$\gen\subseteq\nar$).

%We consider the two-dimensional SFT on the alphabet~$\{\raisebox{-1mm}{\includegraphics{Tiles/tiles-7}},\raisebox{-1mm}{\includegraphics{Tiles/tiles-8}}\}$ with forbidden pattern \raisebox{-1mm}{\includegraphics{Tiles/tiles-9}}. 

\begin{proposition}\label{prop_ZbarZ}
Let~$X$ be the~$\Z^2$-SFT on the alphabet~$\{\whitetile,\blacktile\}$ with forbidden pattern \raisebox{-.5mm}{\includegraphics{Tiles/tiles-6}}. $X$ weak factors to~$\Zb$ and is weakly mixing, therefore~$X\notin\nar$.
\end{proposition}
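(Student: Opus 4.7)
The plan is to apply Theorem \ref{thm_notin_nar}, which requires two things: that $\Zb$ is a weak factor of $X$, and that $X$ is weakly mixing. The key structural observation that makes both parts essentially automatic is that the forbidden pattern is \emph{horizontal}, so $x \in \{\whitetile,\blacktile\}^{\Z^2}$ lies in $X$ if and only if every row $x_j := (x(i,j))_{i \in \Z}$ lies in the one-dimensional subshift $X_\Zb$, with no constraint linking distinct rows.

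For the weak factor, the plan is to define $f : X \to \Zb$ by $f(x) = F^{-1}(x_0)$, that is, by projecting onto row $0$ and composing with the inverse of the conjugacy $F : \Zb \to X_\Zb$ recalled just above the proposition. This map is well-defined by the row-decoupling observation above. Continuity follows because the preimages of the subbasic opens $\{n\}$, $[n,+\infty]$ and $[-\infty,n]$ depend only on the cells $(-n-1,0)$ and $(-n,0)$, so they are cylinders. Surjectivity is immediate: any $n \in \Zb$ is attained by the configuration in which every row equals $F(n)$. Taking $\varphi : \Z \to \Z^2$ defined by $\varphi(p) = (p,0)$, the $\Z^2$-shift $\sigma^{\varphi(p)}$ restricted to row $0$ is exactly the one-dimensional shift $\sigma^p$, so
\begin{equation*}
f(\sigma^{\varphi(p)}(x)) = F^{-1}(\sigma^p(x_0)) = p \cdot F^{-1}(x_0) = \sigma^p(f(x)),
\end{equation*}
using that $F$ is a $\Z$-equivariant conjugacy. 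Hence $f$ is a weak factor map via $\varphi$.

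For weak mixing, the plan is to use characterization (3) of the weak mixing proposition: given $n \in \N$, take $p = (0, 2n+1)$, so that $Q_n$ and $p + Q_n$ occupy disjoint sets of rows. For any valid $Q_n$-pattern $\pi$ and valid $(p+Q_n)$-pattern $\pi'$, every row of $\pi \cup \pi'$ already lies in $X_\Zb$, so one can extend each such row to a valid $1$-dimensional configuration and fill every untouched row with the all-white configuration. The resulting element of $\{\whitetile,\blacktile\}^{\Z^2}$ has every row in $X_\Zb$, so belongs to $X$, witnessing independence of $Q_n$ and $p+Q_n$. Together with the weak factor map to $\Zb$, Theorem \ref{thm_notin_nar} then yields $X \notin \nar$.

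I do not expect a substantive obstacle here: once one notices that the horizontal forbidden pattern makes $X$ behave like an unrestricted stack of copies of $X_\Zb$, both ingredients (reading one row, and escaping vertically) are almost forced. The only small points requiring care are checking the correct equivariance with the chosen $\varphi$, and noting that weak mixing rather than mixing is enough, since $X$ is certainly not mixing (two cells in the same row with $\blacktile$ on the right and $\whitetile$ on the left remain correlated at arbitrary horizontal distance).
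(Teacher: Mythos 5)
Your proof is correct and follows essentially the same route as the paper: extract a row to get the weak factor map to $\Zb$ via $\varphi(p)=(p,0)$, observe that regions occupying disjoint rows are independent to get weak mixing, and conclude by Theorem \ref{thm_notin_nar}. The extra details you supply (composing with $F^{-1}$, the explicit equivariance computation, the row-by-row extension) are all sound.
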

\begin{proof}
The function that extracts the first row of a configuration of~$X$ is a weak factor map to the~$\Z$-subshift~$X_{\Zb}$, via the homomorphism~$\varphi:\Z\to\Z^2$ defined by~$\varphi(p)=(p,0)$.

It is weakly mixing, because two regions that do not intersect a common row are independent. In particular for each~$n$,~$S_n$ and~$(0,n)+S_n$ are independent.

We then apply Theorem \ref{thm_notin_nar}, implying~$X\notin\nar$.
\end{proof}
%
%
%We now consider the SFT
%\begin{proposition}
%Let~$X$ be the~$\Z^2$-SFT on the alphabet~$\{\whitetile,\blacktile\}$ with forbidden patterns
%\begin{center}
%\raisebox{1.5mm}{\includegraphics{Tiles/tiles-6}}$\quad$ \includegraphics{Tiles/tiles-12}$\quad$ \includegraphics{Tiles/tiles-13}
%\end{center}
%$X$ weak factors to~$\Zb$ and is weakly mixing, therefore~$X\notin\nar$.
%\end{proposition}

%ssssssssss
\subsection{Other subshifts}\label{sec_argument_nar}
In Section \ref{sec_obstructions} we identified two subshifts that do not belong to~$\gen$. We expect them not to belong to~$\nar$ either, but have not been able to prove it so far. The technique used in this section, namely Theorem \ref{thm_notin_nar}, cannot be applied to them because they are all mixing and Proposition \ref{prop_weak_factor_mixing} prevents~$\Zb$ from being a weak factor of a mixing subshift. Indeed,~$\Zb$ is neither periodic nor mixing.

We present two other arguments showing that a subshift does not belong to~$\nar$.

%sss
\subsubsection{Stacks}
Consider the SFT~$X\subseteq \{0,1,2\}^{\Z^2}$ with forbidden patterns~$\pattern{1\\0}$ and~$\pattern{2&0}$ and~$\pattern{2&1}$. A valid configuration is obtained by filling certain rows with~$2$'s and by filling each portion of each column lying between two occurrences of~$2$, by a stack of~$1$'s with a stack of~$0$'s above it.

Let us first show that~$\Zb$ is not a weak factor of~$X$, implying that Theorem \ref{thm_notin_nar} cannot be applied. 
\begin{proposition}
Let~$X,Y$ be subshifts, and assume that~$Y$ is a weak factor of~$X$. If the strongly periodic configurations of~$X$ are dense in~$X$, then the strongly periodic configurations of~$Y$ are dense in~$Y$.
\end{proposition}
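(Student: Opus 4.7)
The plan is to pull density back through the weak factor map, and then check that strong periodicity is preserved in the forward direction. Both steps are routine once the definitions are unfolded, so I do not expect a real obstacle here; the only point that requires a moment's care is that the shift actions on $X$ and $Y$ live over different groups $\Z^d$ and $\Z^e$ connected by the homomorphism $\varphi$ from the definition of a weak factor.

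Concretely, let $f:X\to Y$ and $\varphi:\Z^e\to\Z^d$ witness that $Y$ is a weak factor of $X$. Fix $y\in Y$ and an open neighborhood $U$ of $y$. By surjectivity of $f$, pick $x\in X$ with $f(x)=y$. Continuity of $f$ makes $f^{-1}(U)$ an open neighborhood of $x$ in $X$, so by the density assumption we can choose a strongly periodic $x'\in f^{-1}(U)$. Set $y':=f(x')\in U$; it remains to show that $y'$ is strongly periodic in~$Y$.

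For this, apply the weak factor identity $\sigma^p\circ f=f\circ\sigma^{\varphi(p)}$ for $p\in\Z^e$ to $x'$:
\begin{equation*}
\{\sigma^p(y'):p\in\Z^e\}=\{f(\sigma^{\varphi(p)}(x')):p\in\Z^e\}\subseteq f\bigl(\{\sigma^q(x'):q\in\Z^d\}\bigr).
\end{equation*}
The right-hand side is the image under $f$ of the $\Z^d$-orbit of $x'$, which is finite since $x'$ is strongly periodic. Hence the $\Z^e$-orbit of $y'$ is finite, i.e.~$y'$ is strongly periodic. Since $y$ and $U$ were arbitrary, strongly periodic configurations are dense in~$Y$.

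Note that the argument does not use anything about $\varphi$ beyond its being a homomorphism; in particular $\varphi$ need not be injective or surjective. This is the reason the weak factor relation suffices for this result, rather than the stronger notion of conjugacy.
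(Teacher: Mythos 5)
Your proof is correct and follows essentially the same route as the paper: the forward image of a finite $\Z^d$-orbit under $f$ contains the $\Z^e$-orbit of the image via the identity $\sigma^p\circ f=f\circ\sigma^{\varphi(p)}$, and density is transported by the continuous surjection. Your unpacking of the density step through $f^{-1}(U)$ makes explicit what the paper states in one line, but the argument is the same.
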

\begin{proof}
Let~$X$ be a~$\Z^d$-subshift,~$Y$ a~$\Z^e$-subshift,~$f:X\to Y$ a weak factor map coming with~$\varphi:\Z^e\to\Z^d$. If~$x\in X$ is strongly periodic, i.e.~its orbit~$O_x:=\{\sigma^p(x):p\in\Z^d\}$ is finite, then the orbit of~$f(x)$ is finite because it is contained in~$f(O_x)$: indeed,~$\sigma^p(f(x))=f(\sigma^{\varphi(p)}(x))\in f(O_x)$. As~$f$ is continuous, it sends a dense set to a dense set. 
\end{proof}
Our subshift~$X$ contains a dense set of strongly periodic configurations: given a valid~$S_n$-pattern~$\pi$, extend~$\pi$ on~$[0,n-1]\times [0,n]$ by adding a row filled with~$2$'s at the top, and copy this pattern at each point of the grid~$(n,0)\Z+(0,n+1)\Z$, yielding a strongly periodic configuration extending~$\pi$. However in~$\Zb$ the only strongly periodic configurations are its two fixed points, which are not dense.

We now show that~$X\notin\nar$, using a different but related argument.
\begin{proposition}
$X\notin \nar$.
\end{proposition}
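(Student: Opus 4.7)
The plan is to adapt Proposition~\ref{prop_zbar_narrow}, applied horizontally along row $0$ of $X$ rather than to $X_{\Zb}$. First I check that $X$ is weakly mixing: given valid patterns on $Q_n$ and on $(0,2n+2)+Q_n$, one joins them by turning row $n+1$ into a $2$-row (which decouples the two strips, since $2$-rows impose no vertical constraint on their neighbours) and then filling each resulting strip column by column. Proposition~\ref{prop_wmix_narrow} then reduces the claim to showing that $X$ is not the image of any single $r$-narrow function $f : A^\N \to X$.

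Suppose such an $f$ exists. For $N\in\N$ let $F_N=\{(c,0):0\le c\le N\}$ and let $g_N : A^\N \to \{0,1,2\}^{F_N}$ be the composition of $f$ with the restriction to $F_N$; it is still $r$-narrow. I will use two observations. \emph{(i) Counting:} a row of any configuration of $X$ is either entirely made of $2$'s or contains no $2$ at all, so the number of valid row-$0$ patterns on $F_N$ equals $2^{N+1}+1$ (all $\{0,1\}$-patterns plus the all-$2$ pattern), and surjectivity of $f$ gives $|\im g_N|=2^{N+1}+1$. \emph{(ii) Pairwise correlation:} for any two positions $p\ne q$ of $F_N$ the joint value $(2,0)$ is absent from $\im g_N$, because a single $2$ in row $0$ forces the whole row to consist of $2$'s.

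I would then revisit the proof of Proposition~\ref{prop_zbar_narrow} and observe that its induction on $r$ uses \emph{only} narrowness together with pairwise correlation of the cells in the set $E$ of non-constant output positions; the specific one-dimensional structure of $X_{\Zb}$ is never used. In our setting surjectivity forces $E=F_N$; the input-window lemma combined with pairwise correlation makes the windows $\W_{g_N}(p)$ of cells in $F_N$ pairwise intersect; fixing the input on $\W_{g_N}(p)$ for one such $p$ produces a function $(g_N)_\pi$ whose image is contained in $\im g_N$ (so pairwise correlation is inherited) and whose non-constant cells have windows of size at most $r-1$. The induction then yields $|\im g_N|\le |A|^{r(r+1)/2}$, and choosing $N$ so that $2^{N+1}+1 > |A|^{r(r+1)/2}$ produces the contradiction we want.

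The main obstacle is this last step: one has to state Proposition~\ref{prop_zbar_narrow} in the correct abstracted form — as a cardinality bound for the image of any narrow function whose non-constant output cells are pairwise correlated — and verify that pairwise correlation really does pass to the subimage $\im(g_N)_\pi\subseteq\im g_N$ so that the inductive hypothesis applies. Once this is made explicit, the rest of the argument is purely a counting contradiction and the specific structure of $X$ enters only through the two observations in paragraph two.
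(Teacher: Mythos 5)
Your weak-mixing argument and the reduction via Proposition \ref{prop_wmix_narrow} are fine and match the paper. The gap is in the main step: the ``abstracted form of Proposition \ref{prop_zbar_narrow}'' you want --- a bound $|A|^{r(r+1)/2}$ on the image of any $r$-narrow function whose non-constant output cells are pairwise correlated --- is false, and the induction cannot be repaired. The failure is exactly at the parenthetical you flag: pairwise correlation of non-constant cells is \emph{not} inherited by the subimage $\im{(g_N)_\pi}\subseteq\im{g_N}$. Concretely, $\{0,1\}^{F_N}\subseteq\im{g_N}$ has every cell non-constant and all cells mutually independent, and nothing prevents $\im{(g_N)_\pi}$ from being exactly this set, at which point the inductive hypothesis gives no bound. (In $X_{\Zb}$ each cell has only two values, so non-constancy on a subset forces both values of the forbidden combination to appear and correlation persists down the induction; with three values per cell this breaks.) Worse, your target inequality has an explicit counterexample showing that row $0$ carries no obstruction at all: take $A=\{a_0,a_1,a_2\}$, a surjection $\rho:A\to\{0,1\}$, and define $g(x)_c=2$ if $x_0=a_0$ and $g(x)_c=\rho(x_{c+1})$ otherwise. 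Each cell $c$ is determined by $\{0,c+1\}$, so $g$ is $2$-narrow, its output cells are pairwise correlated (the combination $(2,0)$ is individually achieved but jointly absent), and its image is exactly $\{0,1\}^{F_N}\cup\{2^{F_N}\}$, the full set of valid row-$0$ patterns, of cardinality $2^{N+1}+1$. So the set of valid rows is itself narrowly generable and no projection to a single row can produce a contradiction.

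The paper's proof necessarily uses the two-dimensional structure. It conditions on the \emph{vertical} pattern $20\cdots02$ on $F=\{(0,0),\ldots,(0,K)\}$, which pins two rows of $2$'s and creates a strip in which each column segment $F_i=\{(i,0),\ldots,(i,K)\}$ has exactly $K$ valid fillings $21^a0^b2$ forming a genuine $\Zb$-like chain, these $N$ columns being mutually independent. Proposition \ref{prop_zbar_narrow}, applied to each column where the chain structure really holds, bounds by $b_A(r)$ the number of fillings of each $F_i$ reached after fixing an input pattern $\xi$ on $\W_f(F)$; choosing $K$ large compared to $b_A(r)$ and then $N$ large compared to the $|A|^{r(K+1)}$ choices of $\xi$ gives $|A|^{r(K+1)}b_A(r)^N<K^N$, contradicting surjectivity. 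The ingredient your approach is missing is a family of arbitrarily long chains: the row alphabet $\{0,1\}$ plus one global all-$2$ option is only a chain of length $2$, which narrowness can easily accommodate.
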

\begin{proof}
We show that~$X$ is not the image of a narrow function and that~$X$ is weakly mixing, which implies that~$X\notin\nar$ by Proposition \ref{prop_wmix_narrow}.

Let~$f:A^\N\to X$ be an~$r$-narrow function. Let~$b_A(r)=|A|^\frac{r(r+1)}{2}$ from Proposition \ref{prop_zbar_narrow}. Let~$K\in\N$ be such that~$p:=\frac{b_A(r)}{K}\leq \frac{1}{2}$, and let~$N\in\N$ be such that~$|A|^{r(K+1)}p^N<1$. Consider the valid pattern~$\pi=200\ldots002$ on the region~$F=\{(0,0),\ldots,(0,K)\}$. Let~$\xi$ be a pattern on~$\W_f(F)$ that is sent to~$\pi$ under~$f$, and let~$f_\xi:A^\N\to X$ be obtained by first replacing the content of~$\W_f(F)$ in the input by~$\xi$ and then applying~$f$, and note that~$f_\xi$ is~$r$-narrow. Let~$i\in [1,N]$ and~$F_i=\{(i,0),\ldots,(i,K)\}$. In~$\im{f_\xi}$, there are at most~$b_A(r)$ ways of filling~$F_i$, which is a fraction~$p$ of the number of possible contents. Therefore, the fraction of the possible contents of the rectangle~$R:=F_1\cup\ldots\cup F_N$ reached by~$f_\xi$ is at most~$p^N$. As~$f$ is~$r$-narrow,~$|\W_f(F)|\leq r|F|=r(K+1)$, so there are at most~$|A|^{rK}$ input patterns~$\xi$ sent to~$\pi$, so a fraction of at most~$|A|^{r(K+1)}p^N<1$ contents of~$R$ that are reached by~$f$. Therefore,~$f$ is not surjective.

It is easy to see that~$X$ is weakly mixing by observing that~$S_m$ and~$(0,m+1)+S_m$ are independent. Indeed, if~$x,y$ are valid configurations, then the configuration~$z$ which coincides with~$x$ strictly under row~$m$ and with~$y$ strictly above row~$m$ and filled with~$2$'s on row~$m$ is valid. Therefore, any valid pattern on~$S_m$ and any valid pattern on~$(0,m+1)+S_m$ together form a valid pattern.
\end{proof}

%sss
\subsubsection{Irrational rotation}\label{sec_rotation}
We show that the classical~$\Z$-subshifts encoding irrational rotations on the circle do not belong to~$\nar$. It confirms the intuition that building a valid configuration cannot be achieved in local way, because all the cells need to share an infinite amount of information, namely the point of the circle that is encoded into the configuration.

We recall the definitions, more details can be found in Pytheas Fogg \cite[Section 6.1.2]{PF02}.

Let~$\R/\Z$ be the circle group. Let~$\alpha\in (0,1)$ be irrational and let~$R_\alpha:\R/\Z\to\R/\Z$ be the irrational rotation sending~$x$ to~$x+\alpha\mod 1$. Consider two partitions of the circle:
\begin{align*}
I_0&=[0,1-\alpha)&I'_0&=(0,1-\alpha]\\
I_1&=[1-\alpha,1)&I'_1&=(1-\alpha,1].
\end{align*}

To any~$x\in \R/\Z$ we associate the two sequences~$w(x),w'(x)\in\{0,1\}^\Z$ defined by~$w_n(x)=a$ such that~$R_\alpha^n(x)\in I_a$ and~$w'_n(x)=a$ such that~$R_\alpha^n(x)\in I'_a$. More directly, these sequences can be defined for~$x\in [0,1]$ by
\begin{align*}
w_n(x)&=\floor{(n+1)\alpha+x}-\floor{n\alpha+x},\\
w'_n(x)&=\ceil{(n+1)\alpha+x}-\ceil{n\alpha+x}.
\end{align*}

We then consider the subshift containing all these sequences:
\begin{equation*}
X_\alpha=\{w(x):x\in [0,1)\}\cup \{w'(x):x\in [0,1)\}.
\end{equation*}

%It is indeed shift-invariant as~$\sigma^k(w(x))=w(x+k\alpha)$ and similarly for~$w'$.

For most~$x$, one has~$w(x)=w'(x)$. The two sequence are different only if some~$n\alpha+x$ is an integer, in which case~$w_{n-1}(x)w_n(x)=10$ and~$w'_{n-1}(x)w'_n(x)=01$, and the two sequences coincide elsewhere.

%The mappings~$x\mapsto w(x)$ and~$x\mapsto w'(x)$ are injective. Moreover,~$x$ can be uniquely recovered from the restriction of~$w(x)$ or~$w'(x)$ to any half-line~$(-\infty,k]$ or~$[k,+\infty)$. Indeed, for any~$k\in\N$,~$\{n\alpha-\floor{n\alpha}:n\geq k\}$ is dense in~$[0,1]$ so, so if~$x<y$ then some arbitrarily far bit of~$w(x)$ and~$w(y)$ will differ; the same holds for~$n\leq -k$.

It turns out that~$X_\alpha$ is not weakly mixing, so we need another argument.
\begin{proposition}
$X_\alpha\notin\nar$.
\end{proposition}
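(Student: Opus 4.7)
The plan is to combine a Baire category reduction with a rigidity property of Sturmian sequences, then run an induction on the narrowness parameter analogous to Proposition \ref{prop_zbar_narrow}.

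First I would apply the third item of Proposition \ref{prop_narrow} to assume, for contradiction, that there is a valid finite pattern $\pi$ and an $r$-narrow function $f: A^\N \to X_\alpha$ with $\im f = X_\alpha\cap [\pi]$. The cylinder $X_\alpha \cap [\pi]$ corresponds, through $x \mapsto w(x)$, to an arc $J_\pi \subseteq [0,1)$ of positive length, and is therefore uncountable. I will derive a contradiction by proving that any $r$-narrow function to $X_\alpha$ must have \emph{finite} image.

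The crucial structural observation is a ``three-point rigidity'' of $X_\alpha$: for any three distinct positions $p_1,p_2,p_3\in\Z$, the number of triples $(y_{p_1},y_{p_2},y_{p_3})$ occurring in $X_\alpha$ is at most $6$, hence strictly less than $2^3=8$. Indeed, as $x$ traverses the circle, $w(x)_{p_i}$ changes value only when $x+p_i\alpha$ crosses the boundary $\{0,1-\alpha\}$ of the partition $I_0,I_1$; this yields at most $2\cdot 3=6$ transitions and hence at most $6$ distinct triples. Applying the Disjoint Windows Lemma (stated just before Proposition \ref{prop_zbar_narrow}) first to $\{p_1\},\{p_2\}$ and then to $\{p_1,p_2\},\{p_3\}$, if three \emph{support} positions of $f$ (positions where both symbols occur in $\im f$) had pairwise disjoint input windows, then all $2^3=8$ triples would appear in $\im f\subseteq X_\alpha$, contradicting the bound of $6$.

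Next I would prove by induction on $r$ that every $r$-narrow $f:A^\N\to X_\alpha$ satisfies $|\im f|\leq b(r)$ with $b(r)=|A|^{r(r+1)}$ finite. The base case $r=0$ is trivial. For $r\geq 1$, pick any support position $p_1$, and try successively to find support positions $p_2,p_3$ such that $\W_f(p_1),\W_f(p_2),\W_f(p_3)$ are pairwise disjoint. By the preceding paragraph this attempt must fail at stage $2$ or $3$: in both cases there exists a set $M\subseteq \N$ (equal to $\W_f(p_1)$ or $\W_f(p_1)\cup\W_f(p_2)$) of size at most $2r$ such that every support window intersects $M$. For each of the at most $|A|^{2r}$ input patterns $\xi$ on $M$, fixing the input on $M$ to $\xi$ yields a function $f_\xi$ whose every support window $\W_f(q)\setminus M$ has size at most $r-1$, so $f_\xi$ is $(r-1)$-narrow. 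By the inductive hypothesis $|\im f_\xi|\leq b(r-1)$, hence $|\im f|\leq |A|^{2r}b(r-1)\leq b(r)$. Applied to our $f$ from paragraph 1 this gives $|X_\alpha\cap[\pi]|=|\im f|<\infty$, contradicting uncountability of $X_\alpha\cap[\pi]$.

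The main subtlety I expect is the proper handling of the support: when propagating the Disjoint Windows Lemma through the pair $\{p_1,p_2\}$ versus $\{p_3\}$, I need all of $p_1,p_2,p_3$ to actually take both values in $\im f$ so that the required $8$ triples are individually valid. This will force me to restrict the search for $p_2,p_3$ to support positions (which, by the arc description of $X_\alpha\cap[\pi]$, form an infinite subset of $\Z$ so that the search makes sense), and to check that non-support positions, having empty (or constant) windows, do not disrupt the counting in the inductive step.
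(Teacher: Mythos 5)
Your argument is correct, but it takes a genuinely different route from the paper's. The paper also reduces, via the third item of Proposition \ref{prop_narrow}, to showing that a cylinder $X_\alpha\cap[v]$ is not the image of a narrow function; but it then builds a $1$-narrow map $g$ from $X_\alpha\cap[v]$ onto an \emph{infinite set of non-decreasing binary sequences}, by choosing positions $n_i$ at which the rotation orbit approaches the discontinuity $1-\alpha$ monotonically, and concludes by composing with $g$ and invoking Proposition \ref{prop_zbar_narrow} (narrow images in $X_{\Zb}$ are finite). You instead prove a direct analogue of Proposition \ref{prop_zbar_narrow} for $X_\alpha$ itself: the maximal pattern complexity bound (at most $2k$ patterns on any $k$ positions, here $6<8$ for $k=3$) combined with the disjoint-windows independence lemma forbids three support positions with pairwise disjoint windows, which drives the same kind of induction on $r$ with the slightly larger bound $|A|^{2r}b(r-1)$. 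What your route buys is a clean, reusable sufficient condition --- maximal pattern complexity below $2^k$ for some $k$, plus infinite cylinders --- that does not require locating a monotone ($\Zb$-like) structure inside the subshift; what the paper's route buys is reuse of the already-proved $\Zb$ obstruction and continuity with the ``transition'' theme of the section. Two details of yours should be tightened. First, your count of at most $6$ triples is argued only for the sequences $w(x)$; you must also account for the $w'(x)$ sequences in $X_\alpha$, which is immediate once you note that on any finite window $w'(x)$ coincides with $w(x-\epsilon)$ for all small $\epsilon>0$ (the partition intervals being left-closed, right-open), so they contribute no new finite patterns. Second, the preimage of a cylinder under $w$ is a finite intersection of half-open arcs, not necessarily a single arc; but since each such arc is right-open, the intersection contains a nondegenerate arc $[x,x+\delta)$ around any of its points, and $w$ is injective, so $X_\alpha\cap[\pi]$ is indeed infinite, which is all the final contradiction needs.
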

\begin{proof}
Let~$F\subseteq\Z$ be a finite set, and~$u\in\{0,1\}^F$ a finite pattern. We show that~$X\cap [u]$ is not the image of a narrow function, which implies that~$X\notin \nar$ by Proposition \ref{prop_narrow}.

We say that~$x\in\R/\Z$ is \emph{generic} if no~$n\alpha+x$ is an integer. The generic points are dense in the circle, and their images by~$w$ and~$w'$ are dense in~$X_\alpha$. Let~$x$ be generic such that~$w(x)=w'(x)$ extends~$u$. For a real number~$z$, let~$\{z\}=z-\floor{z}\in [0,1)$ be its fractional part. Let~$i_0\in\N$ be such that~$1-\alpha-2^{-i_0-1}>0$, and~$\epsilon=\min \{\alpha,1-\alpha-2^{-i_0-1}\}$. For each~$i\geq i_0$, there exists~$n_i\in\Z$ such that~$\{n_i\alpha+x\}\in(1-\alpha-2^{-i},1-\alpha-2^{-i-1})$. Let~$v$ be a finite extension of~$u$ which is compatible with~$w(x)$ and such that if~$w(y)$ extends~$v$, then~$|x-y|<\epsilon$. Let~$g:X\cap[v]\to \{0,1\}^\Z$ send~$s$ to~$i\mapsto s(n_i)$. We claim that the image of~$g$ is an infinite set of non-decreasing binary sequences.

One has~$w_{n_i}(x)=0$, so~$\im{g}$ contains the zero sequence.

Let~$y$ be such that~$w(y)$ extends~$v$. We show that~$g(w(y))$ is non-decreasing. As~$|x-y|<\epsilon$, one has~$\{n_i\alpha+y\}=\{n_i\alpha+x\}+(y-x)$ because these quantities are equal modulo~$1$ and both belong to~$[0,1)$. If~$w_{n_i}(y)=1$, then~$\{n_i\alpha+y\}\geq 1-\alpha$, so~$\{n_{i+1}\alpha+y\}=\{n_{i+1}\alpha+x\}+(y-x)> \{n_i\alpha+x\}+(y-x)=\{n_i\alpha+y\}\geq 1-\alpha$, so~$w_{n_{i+1}}(y)=1$. Therefore,~$g(w(y))$ is indeed non-decreasing.

We show that~$g(X\cap [v])$ has infinitely many elements. As~$x$ is generic, the function~$w$ is continuous at~$x$ so there exists~$\delta>0$ such that if~$|x-y|<\delta$, then~$w(y)$ extends~$v$. Let~$i_1$ be such that~$2^{-i_1}<\delta$. For~$i\geq i_1$, let~$y=x+1-\alpha-2^{-i}-\{n_i\alpha+x\}$. One has~$|x-y|<\delta$ and~$\{n_i\alpha+y\}=1-\alpha-2^{-i}$. The sequence~$g(w(y))$ starts with~$i$~$0$'s, followed by~$1$'s.

The function~$g$ is~$1$-narrow, because its output at position~$i$ is determined by its input at position~$n_i$. Therefore, if~$X\cap [v]$ was the image of a narrow function then by composing with~$g$, we would obtain a narrow function whose image is an infinite set of non-decreasing sequences, i.e.~of elements of~$\Zb$, which is a contradiction. As~$[v]\subseteq [u]$,~$X\cap [u]$ is not the image of a narrow function by Proposition \ref{prop_narrow}.
\end{proof}

%SSSSSSS
\section{Subshifts that can be locally generated}\label{sec_positive}

We now present examples of subshifts that belong to~$\gen$. We first show that~$\gen$ is preserved under higher power presentations. This result will be used in the next arguments.

%ssss
\subsection{Higher power presentations and intertwining}\label{sec_block}
A common way of building a subshift from another one is to gather the cells into rectangular blocks and interpret the content of each block as an element of a larger alphabet. It can be found in \cite[Definition 1.4.4]{LM95} in the one-dimensional case, where it is called a higher power presentation of the subshift.

This operation sometimes results in a subshift which is easier to analyze. Intuitively, this operation should not affect the possibility of local generation. Indeed, we show that a higher power presentation of a subshift belongs to~$\gen$ if and only if the original subshift belongs to~$\gen$. Although intuitive, it turns out that this result is not so straightforward to prove.% We will use this result in Section \ref{sec_positive}, it can be skipped at first reading.

Let~$a=(a_1,\ldots,a_d)\in\Z^d$ be a vector with positive coordinates and consider the hyper-rectangle~$R_a=[0,a_1-1]\times\ldots \times [0,a_d-1]$.

\begin{definition}
To any~$\Z^d$-subshift~$X\subseteq \Sigma^{\Z^d}$ we associate its~$a$\textbf{-higher power presentation}~$[X]_a$, which is a~$\Z^d$-subshift on the alphabet~$\Gamma:=\Sigma^{R_a}$ defined as follows. Let~$F_a:\Sigma^{\Z^d}\to \Gamma^{\Z^d}$ be such that~$F_a(x)_p$ is the~$R_a$-pattern appearing at position~$ap=(a_1p_1,\ldots,a_dp_d)$ in~$x$. We then define~$[X]_a=\im{F_a}$.
\end{definition}

The subshift~$[X]_a$ is always a weak factor of~$X$, but not the other way round (see Example \ref{ex_block}]).

The main goal of this section is to show that higher power presentations behave well w.r.t.~the class~$\gen$, as expressed by the next result.

\begin{theorem}[\ti{$\gen$ and higher power presentations}]\label{thm_block_gen}
Let~$a\in\Z^d$ have positive coordinates and~$X$ be a~$\Z^d$-subshift. One has
\begin{equation*}
X\in\gen\iff [X]_a\in\gen.
\end{equation*}
\end{theorem}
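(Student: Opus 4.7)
The forward direction is immediate: $[X]_a$ is a weak factor of $X$ via the block-decomposition $F_a$ with homomorphism $\varphi(p) = ap$ (as noted in Example~\ref{ex_block}), so by closure of $\gen$ under weak factors, $X \in \gen$ implies $[X]_a \in \gen$.

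For the reverse direction, suppose $[X]_a \in \gen$ and fix a factor map $g\colon K \times A^E \to [X]_a$ where $K$ is a countable $\Z^d$-subshift and $E$ a countable $\Z^d$-set (Proposition~\ref{prop_normal_form_bis}). The plan is to construct a factor map $h\colon K' \times A^{E'} \to X$ with $K'$ again a countable $\Z^d$-subshift and $E'$ a countable $\Z^d$-set, so that $X \in \gen$ follows from Proposition~\ref{prop_normal_form_bis}. Take $K' := K \times R_a$ and $E' := E \times R_a$, both endowed with the \emph{skew} $\Z^d$-action $s \cdot (\xi, r) = (s' \cdot \xi, r')$ where $s + r = as' + r'$ with $r' \in R_a$ chosen uniquely (and $s' \in \Z^d$ then determined). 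A short computation using the carry identity $\lfloor(s_1+r')/a\rfloor + \lfloor(s_2+r)/a\rfloor = \lfloor(s_1+s_2+r)/a\rfloor$ (for $r' = (s_2+r) \bmod a$) confirms this is a genuine $\Z^d$-action.

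I will show next that $K'$ embeds $\Z^d$-equivariantly into $(\Sigma' \times R_a)^{\Z^d}$, where $K \subseteq \Sigma'^{\Z^d}$, via $(k, r) \mapsto \tilde{c}$ with $\tilde{c}(p) = (k(\lfloor(p+r)/a\rfloor),\, (p+r) \bmod a)$, realizing it as a countable $\Z^d$-subshift; meanwhile $E'$ is a countable $\Z^d$-set whose orbit decomposition is $\bigsqcup_i \Z^d / (aH_i)$ when $E = \bigsqcup_i \Z^d/H_i$, since the stabilizer of $(e, r)$ in $E'$ works out to $a \cdot \mathrm{Stab}_{\Z^d}(e)$, independent of $r$. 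Define $h((k, r), c) = \sigma^r(F_a^{-1}(g(k, b_{c, r})))$ where $b_{c, r} \in A^E$ is the ``compensated slice'' $b_{c, r}(e) = c((-r) \cdot_{E'} (e, 0))$, or equivalently $b_{c, r} = \beta_0 \circ \sigma^{-r}$ with $\beta_0(c)(e) = c(e, 0)$ extracting the zero-slice. The factor map equation for $h$ reduces to the identity $b_{\sigma^s c, r'} = \sigma^{s'}(b_{c, r})$, which follows in turn from $\beta_0 \circ \sigma^{an} = \sigma^n \circ \beta_0$ on $A^{E'}$ (itself immediate since $an \cdot_{E'} (e, 0) = (n \cdot e, 0)$) combined with the relation $\sigma^s \sigma^{-r'} = \sigma^{as'} \sigma^{-r}$ coming from $s + r = as' + r'$; surjectivity of $h$ is clear from $h((k, 0), c)$ with $b_{c, 0} = \beta_0(c)$ ranging over all of $A^E$.

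The main obstacle is pinning down the correct compensation $\sigma^{-r}$ in the definition of $b_{c, r}$. Naive choices such as $b_{c, r}(e) = c(e, r)$ or $b_{c, r}(e) = c(e, 0)$ both fail, because the $R_a$-coordinate in the skew action does not align directly with the offset $r$; precomposing by $\sigma^{-r}$ is exactly the adjustment that makes the two sides of the factor map equation agree, essentially because the zero-slice map $\beta_0$ only intertwines shifts in $a\Z^d$ on the source with shifts in $\Z^d$ on the target.
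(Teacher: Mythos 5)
Your proof is correct, and for the nontrivial direction it takes a genuinely different route from the paper. The paper proves $[X]_a\in\gen\Rightarrow X\in\gen$ by a detour: it introduces the intertwining $\twin{P}$ and shows $\gen$ is stable under it (Proposition~\ref{prop_twin_gen}), shows $P$ is a factor of $[\twin{P}]_a$ (Proposition~\ref{prop_inter_block}), and characterizes factors of $K\times X$ with $K$ a \emph{finite} $\Z^d$-set in terms of higher power presentations (Proposition~\ref{prop_block_factor}); chaining these gives the result. You instead work directly with the given cover $g:K\times A^E\to[X]_a$ and co-induce it along the finite-index subgroup $a\Z^d\subseteq\Z^d$, forming the skew products $K\times R_a$ and $E\times R_a$ and writing an explicit factor map onto $X$ itself. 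I checked the key points: the carry identity does make $s\cdot(\xi,r)=(s'\cdot\xi,r')$ a genuine $\Z^d$-action; $K\times R_a$ is a countable compact zero-dimensional $\Z^d$-space, so Proposition~\ref{prop_countable_Zd_space} already yields $X\in\gen$ even without your explicit symbolic realization of $K\times R_a$ (which is nevertheless correct and lets you invoke Proposition~\ref{prop_normal_form_bis} instead); the equivariance of $h$ reduces exactly to $b_{\sigma^s c,\,r'}=\sigma^{s'}(b_{c,r})$, which follows from $\beta_0\circ\sigma^{as'}=\sigma^{s'}\circ\beta_0$ together with $s-r'=as'-r$, as you say, and your compensation by $\sigma^{-r}$ is indeed the unique adjustment that makes the two sides match; surjectivity is clear on the slice $r=0$ since $\beta_0$ is onto and $F_a:X\to[X]_a$ is a homeomorphism. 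In terms of what each approach buys: yours is shorter and self-contained (one construction, one computation, recognizable as the standard induction of a group action from $a\Z^d$ up to $\Z^d$), while the paper's route is longer but produces reusable intermediate statements ($\gen$ is stable under intertwining; the finite-$K$ factor characterization) that have independent interest.
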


The rest of this section is devoted to the proof of Theorem \ref{thm_block_gen} and can be skipped at first reading.

Let us start with a result highlighting the role of the compact subshift~$K$ in the definition of~$\gen$ when it is finite.
\begin{proposition}[\ti{Higher power presentations and factors}]\label{prop_block_factor}
Let~$X,Y$ be subshifts. The following conditions are equivalent:
\begin{itemize}
\item $Y$ is a factor of~$K\times X$ for some finite~$\Z^d$-set~$K$,
\item There exists~$a\in\Z^d$ with positive coordinates such that~$[Y]_a$ is a finite union of factors of~$[X]_a$.
\end{itemize}
\end{proposition}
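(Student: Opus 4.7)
The plan is to use the weak conjugacy $F_a^X : X \to [X]_a$ from Example \ref{ex_block}, which is in fact a homeomorphism (since the blocks $ap + R_a$ tile $\Z^d$) intertwining $\sigma^{ap}$ on $X$ with $\sigma^p$ on $[X]_a$. The key observation driving both directions is that a map between $X$ and $Y$ commuting with the restricted action of $a\Z^d$ corresponds, via $F_a^X$ and $F_a^Y$, to a genuine factor map between $[X]_a$ and $[Y]_a$.

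For the direction $(i) \Rightarrow (ii)$: suppose $F : K \times X \to Y$ is a factor map with $K$ a finite $\Z^d$-set. The action $\Z^d \to \mathrm{Sym}(K)$ has finite image, so each basis vector $e_i$ has some finite order $n_i$ modulo the kernel. Set $a = (n_1, \ldots, n_d)$, so that $a\Z^d$ acts trivially on $K$. Then for each $k \in K$ the slice $F_k := F(k, \cdot) : X \to Y$ commutes with $\sigma^{aq}$ for all $q \in \Z^d$, and transported through the weak conjugacies it yields an honest factor map $G_k := F_a^Y \circ F_k \circ (F_a^X)^{-1} : [X]_a \to [Y]_a$. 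Surjectivity of $F$ then gives $[Y]_a = \bigcup_{k \in K} G_k([X]_a)$, a finite union since $K$ is finite.

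For the direction $(ii) \Rightarrow (i)$: suppose $[Y]_a = \bigcup_{i=1}^n G_i([X]_a)$ with each $G_i : [X]_a \to [Y]_a$ a factor map. Pull back to $H_i := (F_a^Y)^{-1} \circ G_i \circ F_a^X : X \to Y$, which are continuous and commute with $\sigma^{aq}$ only. To recover full shift-commutation I will enlarge the label set by the block-position group: take $K := \{1, \ldots, n\} \times \Z^d/a\Z^d$ as a finite $\Z^d$-set, with $\Z^d$ acting by translation on the second factor. For $(i, j, x) \in K \times X$, choose any representative $p \in \Z^d$ of $j$ and set
\[
F(i, j, x) := \sigma^p\bigl(H_i(\sigma^{-p}(x))\bigr).
\]
Independence of the representative follows from $H_i \circ \sigma^{aq} = \sigma^{aq} \circ H_i$; surjectivity follows from $\bigcup_i H_i(X) \supseteq (F_a^Y)^{-1}([Y]_a) = Y$; and a direct computation with the definition confirms that $F$ commutes with the diagonal $\Z^d$-action.

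The main subtlety is the second direction: the lifts $H_i$ respect only the subgroup $a\Z^d$, so one cannot simply take $K = \{1, \ldots, n\}$. Enriching $K$ by $\Z^d/a\Z^d$ exactly compensates for the missing shifts, letting a general shift by $p$ be absorbed either into translating within $X$ (for $p \in a\Z^d$) or into moving the second coordinate of $K$ (for the coset). Once the right $K$ is chosen the verifications are routine, but getting the definition of $F$ and the well-definedness check correct is where the work lies.
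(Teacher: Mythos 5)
Your proof is correct and follows essentially the same route as the paper's: in the forward direction you slice the factor map over the finite set $K$ and choose $a$ so that $a\Z^d$ acts trivially, then transport through the canonical homeomorphisms $F_a$; in the reverse direction you take $K=\{1,\dots,n\}\times\Z^d/a\Z^d$ and define $F(i,[p],x)=\sigma^p\circ H_i\circ\sigma^{-p}(x)$, exactly as in the paper. The only cosmetic imprecision is calling each $G_k$ a factor map onto $[Y]_a$ rather than onto its image, but the union statement you derive is the one actually needed.
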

\begin{proof}
Assume that~$Y$ is a factor of~$K\times X$ for some finite~$\Z^d$-set~$K$. For each~$k\in K$, let~$f_k:X\to Y$ be defined by~$f_k(x)=f(k,x)$. As~$K$ is finite, each vector~$e_i$ of the canonical basis of~$\Z^d$ has a multiple~$a_ie_i$, with~$a_i>0$, that acts as the identity on~$K$. Let~$a=(a_1,\ldots,a_d)$. For any~$p\in\Z^d$,~$ap$ acts as the identity on~$K$. It implies that~$\sigma^{ap}\circ f_k=f_k\circ \sigma^{ap}$:
\begin{align*}
\sigma^{ap}\circ f_k(x)&=\sigma^{ap}\circ f(k,x)\\
&=f(ap\cdot k,\sigma^{ap}(x))\\
&=f(k,\sigma^{ap}(x))\\
&=f_k\circ \sigma^{ap}(x).
\end{align*}
Let~$F_X:X\to [X]_a$ and~$F_Y:Y\to [Y]_a$ be the canonical homeomorphisms, which satisfy~$\sigma^p\circ F_Z=F_Z\circ \sigma^{ap}$ (for~$Z=X$ or~$Y$).  Each function~$F_Y\circ f_k\circ F_X^{-1}$ is a factor map from~$[X]_a$ to its image, and~$[Y]_a$ is the finite union over~$k\in K$ of these images.

Conversely, assume that~$[Y]_a=\bigcup_{i\in I}f_i([X]_a)$ where~$I$ is finite,~$f_i:[X]_a\to [Y]_a$ is continuous and commutes with the shift actions. Let~$K=I\times \Z^d/a\Z^d$ where~$I$ is endowed with the trivial action~$p\cdot i=i$. The functions~$f'_i=F_Y^{-1}\circ f_i\circ F_X:X\to Y$ satisfy~$\sigma^{ap}\circ f'_i\circ \sigma^{-ap}=f'_i$, which implies that for any~$p\in\Z^d$,~$\sigma^p\circ f'_i\circ \sigma^{-p}$ only depends on the equivalence class~$[p]$ of~$p$ modulo~$a\Z^d$. Therefore, the following function~$g:K\times X\to Y$ is well-defined:
\begin{equation*}
g(i,[p],x)=\sigma^p\circ f'_i\circ \sigma^{-p}(x).
\end{equation*}
It is continuous and surjective; moreover, it commutes with the actions:
\begin{align*}
\sigma^q\circ g(i,[p],x)&=\sigma^q\circ \sigma^p\circ f'_i\circ \sigma^{-p}(x)\\
&=\sigma^{p+q}\circ f'_i\circ \sigma^{-(p+q)}(\sigma^q(x))\\
&=g(i,[p+q],\sigma^q(x))\\
&=g(q\cdot (i,[p],x)).
\end{align*}
Therefore,~$g$ is a factor map from~$K\times X$ to~$Y$.
\end{proof}

We introduce another way of building new subshifts from old ones, that will be used to prove Theorem \ref{thm_block_gen}.

\begin{definition}[\ti{Intertwining}]\label{def_twin}
To a subshift~$X\subseteq A^{\Z^d}$ we associate the subshift~$\twin{X}$ whose configurations are made of~$|R_a|$ intertwined configurations of~$X$. Precisely, let~$\psi_a:(A^{\Z^d})^{R_a}\to A^{\Z^d}$ be defined by~$\psi_a(\overline{x})_{aq+r}=x^r_q$, where~$r\in R_a$ and~$\overline{x}=(x^r)_{r\in R_a}$. We define~$\twin{X}=\psi_a(X^R_a)$.
\end{definition}

One easily checks that~$\twin{X}$ is indeed a subshift. 
%Let us check that~$\twin{X}$ is indeed a subshift. It is compact because~$\psi_a$ is a homeomorphism so~$\psi_a(X)$ is compact; it is shift-invariant because one can check that~$\sigma^{p}\circ \psi_a(\overline{x})=\psi_a(\overline{y})$ with~$y^r=\sigma^{q+q'}(x^{r'})$ where~$p=aq+s$,~$r+s=aq'+r'$ and~$r',s\in R_a$. It will be useful to consider the map that sends~$\overline{x}$ to~$\overline{y}$, which is~$\phi_p:(A^{\Z^d})^{R_a}\to (A^{\Z^d})^{R_a}$ defined by~$\phi_p=\psi_a^{-1}\circ \sigma^p\circ \psi_a$.
The restriction~$\psi_a:X^{R_a}\to \twin{X}$ is a homeomorphism but is not a conjugacy. One has~$\sigma^{ap}\circ \psi_a=\psi_a\circ \sigma^p$, so~$\psi_a^{-1}:\twin{X}\to X^{R_a}$ is a weak factor map.

\begin{proposition}[\ti{$\gen$ and intertwining}]\label{prop_twin_gen}
$X\in\gen\iff \twin{X}\in\gen$.
\end{proposition}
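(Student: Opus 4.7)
The plan is to handle each direction of the equivalence separately, leveraging the properties of $\psi_a$ already observed in the paragraph preceding the statement and the closure results for $\gen$ proved earlier in the paper.

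For the direction $\twin{X}\in\gen\Rightarrow X\in\gen$: The relation $\sigma^{ap}\circ\psi_a=\psi_a\circ\sigma^p$ rewrites as $\sigma^p\circ\psi_a^{-1}=\psi_a^{-1}\circ\sigma^{ap}$, which exhibits $\psi_a^{-1}:\twin{X}\to X^{R_a}$ as a weak factor map via the homomorphism $\varphi(p)=ap$. Since $\gen$ is closed under weak factors, $X^{R_a}\in\gen$, and since $X$ is a factor of $X^{R_a}$ (projection to any coordinate), $X\in\gen$.

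For the direction $X\in\gen\Rightarrow\twin{X}\in\gen$: By Proposition~\ref{prop_normal_form_bis}, choose a factor map $f:K\times A^E\to X$ with $K$ a countable subshift and $E$ a countable $\Z^d$-set. Applying $f$ componentwise and then identifying $(K\times A^E)^{R_a}$ with $K^{R_a}\times A^{E\times R_a}$, followed by $\psi_a$, produces a continuous surjection $g:K^{R_a}\times A^{E\times R_a}\to\twin{X}$. I plan to equip the source with an alternative continuous $\Z^d$-action turning $g$ into a factor map, then conclude via Proposition~\ref{prop_countable_Zd_space}. Write any $p\in\Z^d$ uniquely as $p=ap_0+p_1$ with $p_0\in\Z^d$ and $p_1\in R_a$, and for each $r\in R_a$ define a carry $c(r,p_1)\in\Z^d$ and a sum $r\oplus p_1\in R_a$ by the identity $r+p_1=a\,c(r,p_1)+(r\oplus p_1)$. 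A direct calculation yields $(\sigma^p y)(aq+r)=x^{r\oplus p_1}(q+p_0+c(r,p_1))$ for $y=\psi_a((x^r)_r)$, which forces the pulled-back action to permute the $R_a$-coordinates and shift the $r$-th coordinate by $p_0+c(r,p_1)$. On the factor $K^{R_a}$ this is a continuous $\Z^d$-action on a countable compact zero-dimensional space, and on $A^{E\times R_a}$ it is the action induced by the $\Z^d$-action $p\cdot(e,r)=\bigl((p_0+c(r,p_1))\cdot e,\,r\oplus p_1\bigr)$ on the countable set $E\times R_a$. Proposition~\ref{prop_countable_Zd_space} then delivers $\twin{X}\in\gen$.

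The main obstacle is purely bookkeeping: verifying that the prescription above genuinely defines a $\Z^d$-action, i.e.~that $\sigma^{p+q}=\sigma^p\circ\sigma^q$ on the source. Conceptually this is the assertion that the bijection $(q,r)\mapsto aq+r$ from $\Z^d\times R_a$ to $\Z^d$ transports the standard translation action of $\Z^d$ on itself to the twisted action just described, so the verification is automatic once unpacked, but it requires some care with the carry function $c$ and with the fact that the original action on $a\Z^d\subseteq\Z^d$ coincides with the product action on the $R_a$-tuple.
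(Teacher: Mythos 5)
Your proof is correct. The direction $\twin{X}\in\gen\Rightarrow X\in\gen$ is exactly the paper's argument ($\psi_a^{-1}$ is a weak factor map onto $X^{R_a}$, then project). For the converse you take a genuinely different route. The paper argues by structural induction on the canonical form: it checks that intertwining preserves each type of basic subshift ($\twin{K}$ countable, $\twin{A^{\Z^d}}=A^{\Z^d}$, $\twin{A^{\Z^d/H}}=A^{\Z^d/aH}$), commutes with finite products up to conjugacy, and sends factors to factors, so that $\twin{X}$ inherits a presentation as a factor of a product of basic subshifts. You instead build a single explicit presentation of $\twin{X}$ as a factor of $K^{R_a}\times A^{E\times R_a}$, where the source carries the $\Z^d$-action pulled back along $\psi_a\circ f^{R_a}$ (the coordinate permutation $r\mapsto r\oplus p_1$ together with the shift by $p_0+c(r,p_1)$), and then invoke Proposition~\ref{prop_countable_Zd_space}. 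The cocycle identities you need --- $r\oplus s_1=(r\oplus p_1)\oplus p'_1$ and $s_0+c(r,s_1)=p_0+p'_0+c(r,p_1)+c(r\oplus p_1,p'_1)$ for $p+p'=as_0+s_1$ --- do follow from transporting translation on $\Z^d$ through the bijection $(q,r)\mapsto aq+r$ (commutativity of $\Z^d$ ensures the two orderings of the carries agree), and equivariance of $f^{R_a}$ then follows from $f$ being a factor map. What your approach buys is that it avoids the case analysis on basic subshifts entirely, exploiting the fact that Proposition~\ref{prop_countable_Zd_space} and the orbit--stabilizer decomposition already absorb arbitrary countable $\Z^d$-sets and countable compact zero-dimensional $\Z^d$-spaces; what it costs is the carry bookkeeping, which the paper's modular argument hides inside the single identity $\twin{A^{\Z^d/H}}=A^{\Z^d/aH}$.
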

\begin{proof}
As~$X$ is a factor of~$X^{R_a}$ which is a weak factor of~$\twin{X}$, if~$\twin{X}\in\gen$ then~$X\in\gen$.

For the converse implication, it is not difficult to see that intertwining preserves basic subshifts and commutes with finite products and factor maps:
\begin{itemize}
\item If~$K$ is a countable subshift, then so is~$\twin{K}$, because it is homeomorphic to~$K^{R_a}$ which is countable,
\item If~$X=A^{\Z^d}$ is the fullshift, then~$\twin{X}=A^{\Z^d}$ is the fullshift,% Indeed one has~$X^{R_a}=(A^{\Z^d})^{R_a}$ and~$\psi_a$ is surjective, so~$\psi_A(X^{R_a})=A^{\Z^d}$.
\item If~$X=A^{\Z^d/H}$ is a periodic shift, then~$\twin{X}=A^{\Z^d/aH}$ is a periodic shift as well,% Let~$\overline{x}\in (A^{\Z^d})^{R_a}$ and~$y=\psi_a(\overline{x})$. For each~$h\in H$,~$y$ is~$ah$-periodic if and only if each~$x^r$ is~$h$-periodic: indeed,~$y_{ap+r}=x^r_p$ and~$y_{(ap+r)+ah}=y_{a(p+h)+r}=x^r_{p+h}$. As~$\psi_a$ is surjective,~$\psi_a(X^{R_a})$ is the set of~$aH$-periodic configurations.
\item $\twin{X\times Y}$ is conjugate to~$\twin{X}\times \twin{Y}$,% Indeed, there is a natural conjugacy~$f:(X\times Y)^{R_a}\to X^{R_a}\times Y^{R_a}$, and the function~$(\psi_a,\psi_a)\circ f\circ \psi_a^{-1}:\twin{X\times Y}\to\twin{X}\times \twin{Y}$ is a conjugacy as well. 
\item Finally, if~$Y$ is a factor of~$X$, then~$\twin{Y}$ is a factor of~$\twin{X}$.% Let~$f:X\to Y$ be a factor map. Let~$f^{R_a}:X^{R_a}\to Y^{R_a}$ and~$\twin{f}=\psi_a\circ f^{R_a}\circ \psi_a^{-1}$. Note that the function~$\phi_p:(A^{\Z^d})^{R_a}\to (A^{\Z^d})^{R_a}$ defined after Definition \ref{def_twin} shifts and permutes the components of its input, so it commutes with~$f^{R_a}$. As~$\sigma^p\circ \psi_a=\psi_a\circ \phi_p$, one has~$\sigma^{p}\circ \twin{f}=\twin{f}\circ \sigma^{p}$.
\end{itemize}

As a result, if~$X\in\gen$, i.e.~$X$ is a factor of a finite product of basic subshifts~$B_i$, then~$\twin{X}$ is a factor of the product of basic subshifts~$\twin{B_i}$, so~$\twin{X}\in\gen$.
\end{proof}

Intertwining and higher power presentations are related in the following way.
\begin{proposition}\label{prop_inter_block}
$X$ is a factor of~$[\twin{X}]_a$.
\end{proposition}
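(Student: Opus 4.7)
The approach is to show that $[\twin{X}]_a$ is in fact conjugate to the finite product $X^{R_a}$, after which the conclusion follows immediately because $X$ is a factor of $X^{R_a}$ via any coordinate projection.

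First I would verify two commutation relations, both already implicit in the text. On the one hand, the intertwining map $\psi_a : X^{R_a} \to \twin{X}$ satisfies $\sigma^{ap}\circ \psi_a = \psi_a \circ \sigma^p$ (where the right-hand $\sigma^p$ denotes the componentwise shift on $X^{R_a}$): indeed $\psi_a(\sigma^p \overline{x})_{aq+r} = (\sigma^p x^r)_q = x^r_{p+q}$ while $\sigma^{ap}\psi_a(\overline{x})_{aq+r} = \psi_a(\overline{x})_{a(p+q)+r} = x^r_{p+q}$. On the other hand, the blocking map $F_a : \twin{X} \to [\twin{X}]_a$ satisfies $\sigma^p \circ F_a = F_a \circ \sigma^{ap}$, as noted in Example \ref{ex_block}. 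Composing these two relations yields
\begin{equation*}
\sigma^p \circ (F_a \circ \psi_a) = F_a \circ \sigma^{ap} \circ \psi_a = F_a \circ \psi_a \circ \sigma^p,
\end{equation*}
so $F_a \circ \psi_a : X^{R_a} \to [\twin{X}]_a$ commutes with the ordinary shift action on both sides. Since $\psi_a$ and $F_a$ are homeomorphisms onto their images $\twin{X}$ and $[\twin{X}]_a$ respectively, the composition is also a homeomorphism, and hence a genuine conjugacy between the $\Z^d$-subshifts $X^{R_a}$ and $[\twin{X}]_a$.

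Once this conjugacy is established, the proof concludes by observing that for any fixed $r \in R_a$ the coordinate projection $\pi_r : X^{R_a} \to X$, $(x^{r'})_{r' \in R_a} \mapsto x^r$, is a factor map (continuous, surjective, and commuting with the shift). Composing $\pi_r$ with the inverse of the conjugacy $F_a \circ \psi_a$ exhibits $X$ as a factor of $[\twin{X}]_a$.

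I do not expect any genuine obstacle here; the entire argument is bookkeeping with the two natural maps already introduced. The only point requiring care is that the two "rescalings by $a$" appearing in the commutation relations for $\psi_a$ and $F_a$ cancel out exactly, producing an honest conjugacy rather than merely a weak conjugacy.
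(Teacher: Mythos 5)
Your proposal is correct and follows essentially the same route as the paper: the paper likewise uses $\psi_a^{-1}$ composed with a coordinate projection, noting that the relation $\sigma^p\circ(\pi\circ\psi_a^{-1})=(\pi\circ\psi_a^{-1})\circ\sigma^{ap}$ cancels against the rescaling in $F_a$ to yield a genuine factor map from $[\twin{X}]_a$ to $X$. The only cosmetic difference is that you package the cancellation as an explicit conjugacy $X^{R_a}\cong[\twin{X}]_a$, which the paper leaves implicit.
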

\begin{proof}
Let~$\pi:X^{R_a}\to X$ be the projection sending~$\overline{x}$ to~$x^0$. It is a factor map. Let~$f=\pi\circ \psi_a^{-1}:\twin{X}\to X$. It is continuous and surjective, and satisfies~$\sigma^p\circ f=f\circ \sigma^{ap}$. As a result,~$f$ is a factor map from~$[\twin{X}]_a$ to~$X$.
\end{proof}

We now have all the ingredients to prove the main result of this section.
\begin{proof}[Proof of Theorem \ref{thm_block_gen}]
The function~$F_a$ satisfies~$\sigma^p\circ F_a=F_a\circ \sigma^{ap}$, so~$[X]_a$ is a weak factor of~$X$ via~$F_a$ and~$\varphi(p)=ap$. In particular, if~$X\in\gen$ then~$[X]_a\in\gen$. Our goal is to show that the converse also holds.

Conversely, if~$[X]_a\in\gen$, then~$[X]_a$ is a factor of a finite product~$P$ of basic subshifts. In turn,~$P$ is a factor of~$[\twin{P}]_a$ by Proposition \ref{prop_inter_block}. Therefore,~$[X]_a$ is a factor of~$[\twin{P}]_a$, which implies that~$X$ is a factor of~$K\times \twin{P}$ for some finite~$\Z^d$-set~$K$ by Proposition \ref{prop_block_factor}. As~$\twin{P}$ belongs to~$\gen$ ($\twin{P}$ is even a finite product of basic subshifts) by Proposition \ref{prop_twin_gen}, so do~$K\times \twin{P}$ and~$X\in\gen$.
\end{proof}

The case of~$\nar$ is much easier to prove.
\begin{proposition}[\ti{$\nar$, higher power presentations and intertwining}]\label{prop_block_nar}
Let~$a\in\Z^d$ have positive coordinates and~$X$ be a~$\Z^d$-subshift. One has
\begin{equation*}
X\in\nar\iff [X]_a\in\nar\iff \twin{X}\in\nar.
\end{equation*}
\end{proposition}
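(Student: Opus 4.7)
The plan is to exploit the fact that both $F_a : X \to [X]_a$ and $\psi_a : X^{R_a} \to \twin{X}$ are bijections that are narrow in both directions. Unlike the situation for $\gen$ (Theorem \ref{thm_block_gen}), where the difficulty came from these being only weak factor maps rather than factor maps, the class $\nar$ does not interact with the shift action at all: it only records that a set is a countable union of images of narrow functions. So I expect every equivalence here to follow from a short composition argument, and the real content is simply verifying narrowness of the relevant maps.

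I would start by reading off narrowness radii directly from the defining formulas. The map $F_a$ is $|R_a|$-narrow, since each output cell $p$ of $[X]_a$ carries the block of $x$ at $ap + R_a$, while the inverse $F_a^{-1}$ is $1$-narrow: each output cell $q = ap + r$ with $r \in R_a$ reads exactly position $p$ of the input. Viewing $X^{R_a}$ as a subshift indexed by $\Z^d \times R_a$, the formula $\psi_a(\overline{x})_{aq+r} = x^r_q$ shows at once that both $\psi_a$ and $\psi_a^{-1}$ are $1$-narrow.

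The next step is the following transfer observation: if $X \in \nar$ and $g : X \to Y$ is a narrow surjection, then $Y \in \nar$. This is immediate from the definition, using that a composition of narrow functions is narrow: writing $X = \bigcup_n \im{f_n}$ with each $f_n$ narrow, we get $Y = \bigcup_n \im{g \circ f_n}$. Applied twice, to $F_a$ and then to $F_a^{-1}$, this yields $X \in \nar \iff [X]_a \in \nar$.

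For the intertwining equivalence I would pass through the intermediate subshift $X^{R_a}$. The direction $X \in \nar \Rightarrow X^{R_a} \in \nar$ uses that a finite product of narrow functions is narrow: writing $X = \bigcup_n \im{f_n}$, we obtain $X^{R_a}$ as a countable union of images of the products $\prod_{r \in R_a} f_{n_r}$. The converse uses that each coordinate projection $X^{R_a} \to X$ is a factor map, hence narrow. Composing with the narrowness of $\psi_a$ and $\psi_a^{-1}$ established above, this chains into $X \in \nar \iff X^{R_a} \in \nar \iff \twin{X} \in \nar$. I expect no genuine obstacle: the sharp contrast in difficulty between this proposition and Theorem \ref{thm_block_gen} is itself the point, and reflects the combinatorial rather than dynamical nature of $\nar$.
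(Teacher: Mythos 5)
Your proposal is correct and follows essentially the same route as the paper: observe that $F_a$, $\psi_a$ and their inverses are narrow, use that membership in $\nar$ transfers along narrow surjections (composition of narrow functions being narrow), and handle $X^{R_a}$ via finite products of narrow functions and projections. Your version just makes the narrowness radii and the transfer lemma explicit.
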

\begin{proof}
The canonical homeomorphisms~$F_a:X\to [X]_a$ and~$\psi_a:X^{R_a}\to\twin{X}$, as well as their inverses, are narrow functions, so they preserve the property of being in~$\nar$. Note that~$X\in\nar\iff X^{R_a}\in\nar$ because any finite product of narrow functions is narrow, and any projection is narrow.
\end{proof}
%
%\begin{proposition}
%The class~$\gen$ contains a subshift which is not an SFT.
%\end{proposition}
%\begin{proof}
%Let~$X\subseteq \{0,1\}^\Z$ be the smallest subshift containing the sequences~$x$ satisfying~$x_k=0$ if~$k\notin\{\pm 2^n:n\in\N\}$. Then~$X\in \gen$. Let~$K$ be the smallest subshift containing the sequences~$x$ satisfying~$x_k=0$ iff~$k\notin\{\pm 2^n:n\in\N\}$. It is countable, and the function~$f:K\times\{0,1\}^\Z$ outputing the bitwise AND operation is a factor map.
%
%Let~$f:\{0,1\}^\Z\to \{0,1\}^\Z$ send~$x$ to~$y$ defined by~$y_k=x_k$ if~$k=\pm 2^n$ for some~$n$,~$y_k=0$ otherwise. $f$ is a local function, because~$k$ is determined by~$\{k\}=k\cdot R$ where~$R=\{0\}$.
%
%$X$ is the union of the images of~$\sigma^p\circ f\circ \sigma^{-p}$, which are all local functions, so~$X\in \gen$.
%\end{proof}

%ssss
\subsection{One-dimensional SFTs}
One-dimensional SFTs are known to be much simpler than higher-dimensional ones in many respects: several problems such as non-emptyness are decidable, many properties have simple characterizations that do not extend to higher-dimensions, etc. We show that they are also simpler in the sense of local generation.
\begin{theorem}\label{thm_1D_SFT}
Every one-dimensional SFT belongs to~$\gen$.
\end{theorem}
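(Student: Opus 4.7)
The plan is to reduce to bi-infinite walks in a finite graph and then exploit the strongly connected component (SCC) structure.

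First, I will apply Theorem \ref{thm_block_gen} to $[X]_n$ with $n$ the length of the longest forbidden pattern, reducing to the case where $X$ is a nearest-neighbor SFT, i.e., the subshift of bi-infinite walks in a finite directed graph $G=(V,E)$. Since the SCCs of $G$ form a DAG and a walk cannot re-enter an SCC once it has left, every configuration visits each SCC in at most one contiguous interval of time, and so has well-defined source and sink SCCs $C_-(x)$ and $C_+(x)$.

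The key sub-claim will be that for each SCC $C$, the sub-SFT $Y_C\subseteq X$ of walks confined to $C$ belongs to $\gen$. I intend to prove this by a case analysis on the period of $Y_C$: if $Y_C$ has entropy zero it is countable (a finite union of periodic orbits), hence a basic subshift in $\gen$; otherwise $Y_C$ is irreducible with some period $p\geq 1$, and applying Theorem \ref{thm_block_gen} to $[Y_C]_p$ decomposes it as the disjoint union of $p$ aperiodic (hence mixing) irreducible SFTs, each of which is a factor of a sufficiently large fullshift by the classical factor theorem for mixing SFTs (see \cite{LM95}). A finite disjoint union $Z_0\sqcup\cdots\sqcup Z_{p-1}$ of subshifts in $\gen$ is itself in $\gen$, being a factor of $\{0,\ldots,p-1\}^{\Z/\Z}\times Z_0\times\cdots\times Z_{p-1}$ via the selector $(i,(z_j))\mapsto z_i$, where $\{0,\ldots,p-1\}^{\Z/\Z}$ is the finite countable basic subshift of constant configurations.

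Next, to assemble the full $X$, I will introduce a countable compact $\Z$-subshift $K$ encoding the \emph{itinerary} of each configuration: at every position, $K$ records either the active SCC $C$ or the specific transition edge traversed between two consecutive SCCs along a chain in the DAG, together with the finite internal walks through intermediate SCCs. The alphabet of $K$ is finite, the allowed transitions are defined by local constraints, and any configuration visits at most $|V|$ SCCs, so $K$ is a countable SFT and hence a basic subshift in $\gen$. I will then construct a factor map $f\colon K\times\prod_C Y_C\to X$ whose value at position $t$ is read locally from the component $y_C$ indicated by the label of $K$ at $t$, or from the label itself at transition positions.

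The main technical obstacle will be the matching of boundary conditions, since the sampled walk $y_C$ must end, at each transition position, at the prescribed source vertex of the outgoing transition edge (and similarly for incoming). I plan to absorb the extra degrees of freedom needed to enforce this into additional finite-data factors (or into $K$), exploiting the strong connectedness of each $C$: within a bounded window around each transition position, a continuous sliding-block ``rerouting'' locally replaces $y_C$ by a walk that passes through the required vertex at the required time, which is always possible by strong connectedness of $C$. The shift-equivariance of the resulting factor map then yields $X\in\gen$.
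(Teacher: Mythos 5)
Your strategy (reduce to vertex shifts, treat each strongly connected component separately, then glue along the itinerary) is reasonable in outline, but its central step is false as stated. A factor map sends the fixed points of a fullshift to fixed points of the image, so a mixing SFT with no fixed point is \emph{not} a factor of any fullshift; the classical factor theorem you invoke (Boyle's lower-entropy factor theorem) requires, beyond mixing and an entropy gap, a periodic-point compatibility condition which for a fullshift source amounts to the target having a fixed point. A concrete counterexample arising as a single SCC is the SFT on $\{0,1,2\}$ forbidding $00$, $11$, $22$: it is irreducible and aperiodic (cycles of lengths $2$ and $3$), so your cyclic decomposition with $p=1$ returns the subshift itself, which has no fixed point. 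This is repairable --- pass to a further higher power presentation $[Y_C]_L$ with $L$ the length of some cycle of $C$ to create a fixed point, then apply Theorem \ref{thm_block_gen} again --- but that step is missing, and in any case this machinery is far heavier than what $\gen$ requires, since $\gen$ also allows countable and periodic factors.

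The assembly step is where the real difficulty lies, and it is only sketched. The ``rerouting'' must be a sliding block code, must be surjective (every walk in $C$ of the prescribed length between the prescribed entry and exit vertices has to be attainable), and must cope with visits to an SCC shorter than the rerouting window; moreover, if $K$ is to record arbitrary finite internal walks through a non-trivial intermediate SCC, its closure contains uncountably many configurations as soon as that SCC has positive entropy, so $K$ is no longer countable. None of this is resolved. The paper sidesteps all of it with a different device (Lemma \ref{lem_transitive}): replace $G$ by a power $G^k$ whose edge relation coincides with its reachability relation. Once $G^k$ is transitive in this sense, any choice of one vertex in each prescribed SCC automatically yields a valid bi-infinite walk, so the factor map $F(c,s)_n=f_{c_n}(s_n)$ of Lemma \ref{lem_transitive_gen} needs no boundary matching at all, and $X_G$ is recovered via Theorem \ref{thm_block_gen}. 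I would either adopt that trick or carry out your gluing in full detail.
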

To a finite directed graph~$G=(V,E)$ (possibly with loops) we associate the subshift~$X_G$, called vertex shift in \cite[Definition 2.3.7]{LM95}, which is the set of bi-infinite walks through~$G$, i.e.~$X_G=\{x\in V^\Z:\forall n, (x_n,x_{n+1})\in E\}$.

It is well-known that every SFT~$X$ is conjugate to~$X_G$ for some finite directed graph~$G$ \cite[Proposition 2.3.9 (3)]{LM95}, obtained as follows. Let~$n$ be larger than the lengths of forbidden patterns, let~$V$ be the set of valid words of length~$n$ and~$E$ be the set of pairs~$(au,ub)$ where~$|u|=n-1$,~$a,b$ are letters and~$au,ub\in V$. The function~$f:X\to X_G$ such that~$f(x)_i=x_i\ldots x_{i+n-1}$ is then a conjugacy.

We show that for any finite directed graph~$G$,~$X_G\in\gen$. The idea is that the walks through~$G$ can be described in the following way: for some~$k\geq 1$ and some equivalence relation between the vertices of~$G$, a walk is described by giving, for each~$n$ which is a multiple of~$k$, the equivalence class of the vertex visited at position~$n$ (we call this sequence a partial sequence) and then choosing a vertex in each equivalence class and filling the intermediate positions. The careful choice of~$k$ and the equivalence relation makes the set of partial sequences countable. The second step is local, because the choices of vertices between positions~$n$ and~$n+k$ can be made by knowing the equivalence classes at positions~$n$ and~$n+k$ only. Let us now give the details.

To a finite directed graph~$G=(V,E)$ we associate its powers~$G^k=(V,E^k)$, where~$(u,v)\in E^k$ if there exists a walk of length~$k$ from~$u$ to~$v$ in~$G$, i.e.~there exist vertices~$v_0,\ldots,v_k$ with~$v_0=u, v_k=v$ and~$(v_i,v_{i+1})\in E$ for all~$i<k$.
\begin{lemma}\label{lem_transitive}
Let~$G$ be a finite directed graph (we allow loops). There exists~$k\geq 1$ such that~$G^k$ is transitive (i.e., if there is a walk from~$u$ to~$v$, then there is an edge from~$u$ to~$v$).
\end{lemma}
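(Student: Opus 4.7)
The plan is to use the finiteness of $V$ to force the sequence of powers $(G^n)_{n\geq 1}$ to be eventually periodic, and then to pick $k$ inside the periodic regime. Concretely, each $G^n$ is a directed graph on the fixed finite vertex set $V$, and there are only finitely many such graphs, so by the pigeonhole principle there exist integers $p\geq 1$ and $d\geq 1$ with $G^p=G^{p+d}$. The first step is to upgrade this single equality to the assertion that $G^n=G^{n+d}$ for every $n\geq p$. This follows by induction on $n$, using that the edge set of $G^{n+1}$ is determined by $E(G)$ and $E(G^n)$ via the composition rule
\begin{equation*}
(u,v)\in E(G^{n+1})\iff \exists w,\ (u,w)\in E(G)\text{ and }(w,v)\in E(G^n),
\end{equation*}
so equal values of $E(G^n)$ at positions differing by $d$ propagate to all larger $n$.

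Next I would choose $k$ to be any positive multiple of $d$ with $k\geq p$, for example $k=d\,\lceil p/d\rceil$. Then for every $m\geq 1$, the integer $mk$ is also a positive multiple of $d$ satisfying $mk\geq p$, and by the eventual periodicity established above one obtains $G^{mk}=G^k$.

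Finally, to verify that $G^k$ is transitive: a walk of length $m\geq 1$ from $u$ to $v$ in $G^k$ corresponds, by concatenating the $m$ length-$k$ walks in $G$ associated to its edges, to a walk of length $mk$ in $G$ from $u$ to $v$. Hence $(u,v)\in E(G^{mk})=E(G^k)$, so there is an edge from $u$ to $v$ in $G^k$, as required.

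There is no real technical obstacle here; the argument is purely combinatorial and relies only on pigeonhole and the multiplicative behaviour of graph powers. The one point that deserves explicit mention is the convention that a ``walk'' in the statement means a walk of positive length (otherwise transitivity would force a loop at every vertex, which of course can fail); with this convention the proof goes through exactly as sketched.
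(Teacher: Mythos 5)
Your proof is correct, but it takes a genuinely different route from the paper's. You work in the finite set of directed graphs on~$V$ under relational composition: pigeonhole gives $G^p=G^{p+d}$, the composition rule propagates this to eventual periodicity $G^n=G^{n+d}$ for all $n\geq p$, and choosing $k$ a multiple of~$d$ with $k\geq p$ yields $G^{mk}=G^k$ for every $m\geq 1$; transitivity then follows from the identity that a length-$m$ walk in $G^k$ is a length-$mk$ walk in $G$, i.e.\ $(G^k)^m=G^{mk}=G^k$. This is the standard idempotent-power argument from finite semigroup theory. The paper argues instead directly on walk lengths: it first reduces to graphs with the ``loop property'' (every vertex on a circuit has a loop to itself) by passing to a power whose exponent is a common multiple of circuit lengths, and then shows that for such graphs each set $L_{u,v}$ of walk lengths from $u$ to $v$ is finite or cofinite, so that any sufficiently large exponent works. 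Your argument is shorter, needs no case distinction, and handles $u=v$ and $u\neq v$ uniformly (the paper only explicitly checks distinct vertices, leaving the loop case implicit); the paper's argument is more hands-on and exhibits the loop property as an explicit intermediate structural fact, though that property is in any case a consequence of transitivity. Your remark that ``walk'' must mean a walk of positive length is also the right reading and matches the paper's implicit usage. All the steps check out: in particular $mk-k=(m-1)k$ is a non-negative multiple of~$d$ and $k\geq p$, so $G^{mk}=G^k$ as claimed.
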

\begin{proof}
We first show the result if the graph has the property that every vertex that belongs to a circuit has a loop to itself (call it the loop property).

Let~$u,v$ be two distinct vertices and let~$L_{u,v}\subseteq\N$ be the set of lengths of walks from~$u$ to~$v$. We claim that~$L_{u,v}$ is either finite or co-finite. Indeed, if there is no walk from~$u$ to~$v$ visiting some vertex twice, then the lengths of these walks are bounded by the number of vertices; if there is a path from~$u$ to~$v$ visiting some vertex~$w$ twice, then by the loop property~$w$ has a loop to itself. The portion of the walk between the two occurrences of~$w$ can be replaced by that loop and one can create walks of any larger length by cycling though that loop as any times as wanted.

Let~$k_{u,v}$ be an upper bound on~$L_{u,v}$ or its complement, and let~$k=\max_{u,v} k_{u,v}$. Let~$u\neq v$. If there is a walk in~$G^k$ from~$u$ to~$v$, then there is a walk in~$G$ whose length is a non-zero multiple of~$k$, in particular this length is at least~$k$. By choice of~$k$, there is a walk in~$G$ of length exactly~$k$, i.e.~there is an edge in~$G^k$ from~$u$ to~$v$. Therefore,~$G^k$ is transitive.

We now prove that for any finite directed graph~$G$, there exists~$k\geq 1$ such that~$G^k$ has the loop property. For each vertex~$u$ that belongs to a circuit, let~$k_u$ be the length of such a circuit. Let~$k$ be a common multiple of the~$k_u$'s. In~$G^k$, if a vertex~$u$ belongs to a circuit, then~$u$ belongs to a circuit in~$G$ so it belongs to a circuit in~$G$ of length~$k$, hence~$u$ has a loop to itself in~$G^k$. Therefore,~$G^k$ has the loop property. By the first part of the proof,~$G^{kl}$ is transitive for some~$l$. 
\end{proof}
Note that~$G^k$ need not be the transitive closure of~$G$: for instance if~$G$ is a cycle of length~$2$, then~$k$ must be a multiple of~$2$ and~$G^k$ has two connected components, each one consisting of a vertex with a loop.

\begin{lemma}\label{lem_transitive_gen}
Let~$G$ be a finite directed graph which is transitive. Then~$X_G\in\gen$.
\end{lemma}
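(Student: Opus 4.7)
The plan is to decompose each bi-infinite walk in~$G$ as a pair (SCC itinerary, free vertex labels) and thereby exhibit~$X_G$ as a factor of a product of basic subshifts. Concretely I will build a countable $\Z$-subshift $Y$ (the ``skeleton'' of SCC itineraries) and a finite alphabet~$A$, together with a factor map $\Phi \colon Y \times A^\Z \to X_G$; this immediately gives $X_G \in \gen$.

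The first step will be to exploit transitivity to pin down the structure of the strongly connected components of~$G$. Inside any SCC of size~$\geq 2$, every pair of vertices is connected by a walk and hence (by transitivity) by an edge, and every vertex has a self-loop; so each such SCC is a complete digraph with self-loops (call it \emph{complete}). Singleton SCCs are either complete (when they carry a self-loop) or \emph{trivial} (no self-loop). The critical consequence of transitivity that I will need is \emph{cross-SCC saturation}: whenever two distinct SCCs $V, V'$ are linked by an edge $V \to V'$ in the DAG of SCCs of~$G$, \emph{every} pair $(u,v) \in V \times V'$ is an edge of~$G$. I plan to prove this by concatenating an intra-$V$ walk $u \to u_0$, a bridging edge $u_0 \to v_0$ witnessing the DAG step, and an intra-$V'$ walk $v_0 \to v$ into a walk of length~$\geq 1$, then applying transitivity.

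Next I will introduce the skeleton. For $x \in X_G$ its SCC-sequence $y_n := S(x_n)$ is automatically non-decreasing in the DAG order (so, the DAG being finite, has only finitely many transitions), and wherever it is constant the ambient SCC must carry the edge $(x_n, x_{n+1})$ and hence be complete. I will then define $Y$ to be the set of all bi-infinite SCC-sequences obeying these two local conditions: each transition $y_n \to y_{n+1}$ is either the identity on a complete SCC, or a DAG edge to a strictly later SCC. These conditions are closed and shift-invariant, and each element of~$Y$ is pinned down by a finite DAG path together with a finite list of integer transition times, so $Y$ is a countable $\Z$-subshift, hence a basic subshift.

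Finally I will build the factor map. Let $A = \{1, \ldots, N\}$ with $N = \max_V |V|$, and fix once and for all a surjection $\phi_V \colon A \to V$ for each SCC~$V$. Define $\Phi \colon Y \times A^\Z \to V^\Z$ by $\Phi(y, a)_n = \phi_{y_n}(a_n)$. Continuity and shift-equivariance are immediate from the uniform local rule. The image lies in $X_G$ by the two cases of Step~1: on constant stretches of~$y$ the ambient SCC is complete, so every pair of its vertices is an edge; on DAG steps, cross-SCC saturation supplies the edge. Surjectivity onto $X_G$ is then immediate by setting $y_n = S(x_n)$ and choosing $a_n \in \phi_{y_n}^{-1}(x_n)$. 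Thus $X_G$ is a factor of $Y \times A^\Z$, a product of basic subshifts, placing $X_G$ in~$\gen$. The main obstacle I anticipate is the cross-SCC saturation of Step~1: once it is in hand, the decomposition into a countable skeleton plus a fullshift of labels falls out essentially by inspection.
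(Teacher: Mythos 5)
Your proposal is correct and follows essentially the same route as the paper: a countable skeleton subshift of SCC-itineraries (the paper phrases it as the vertex shift of the condensation of~$G$, which is exactly your~$Y$), crossed with a fullshift of labels, mapped onto~$X_G$ by fixed surjections onto each SCC, with transitivity supplying the needed edges. The only differences are cosmetic (the paper uses alphabet~$V$ and surjections that are the identity on each component, and verifies validity inline rather than via your separately stated saturation lemma).
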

\begin{proof}
Let~$H$ be the condensation of~$G$, which is the directed acyclic graph whose vertices are the strongly connected components of~$G$, but in which we keep the loops of~$G$. More precisely, if~$C$ is a strongly connected component of~$G$, then either~$C$ is a singleton with no loop, or each vertex in~$C$ has a loop to itself. In~$H$,~$C$ will have a loop to itself it and only if the vertices of~$C$ have loops to themselves in~$G$. As~$H$ has no cycle other than loops, the subshift~$X_H$ is countable: a configuration is obtained by concatenating a finite number of constant blocks (either there is one bi-infinite block, or the starting and ending blocks are infinite on one side and the intermediate blocks are finite), so it can be described by the positions of the blocks and the constant symbol in each block.

We define a factor map~$F:X_H\times V^\Z\to X_G$. First, for each strongly connected component~$C\subseteq V$ of~$G$, fix a surjective function~$f_C:V\to C$ which is the identity on~$C$. Given~$c\in X_H$ and~$s\in V^\Z$, let~$y=F(c,s)$ be defined by~$y_n=f_{c_n}(s_n)$ for all~$n\in\Z$.

First,~$y\in X_G$. One has~$(c_n,c_{n+1})\in E_H$ and~$y_n\in c_n$ and~$y_{n+1}\in c_{n+1}$ so there is a walk from~$y_n$ to~$y_{n+1}$ in~$G$. As~$G$ is transitive,~$(y_n,y_{n+1})$ is an edge in~$G$.

Every~$y\in X_G$ belongs to~$\im{F}$: let~$c_n$ be the strongly connected component of~$y_n$ and let~$s=y$. One has~$c\in X_H$ and~$f_{c_n}(y_n)=y_n$ because~$y_n\in c_n$, so~$F(c,s)=y$.

Therefore, the function~$F$ is surjective. It is continuous and commutes with the shift action, because~$F(c,s)_n$ is determined by~$c_n$ and~$s_n$ by a rule that does not depend on~$n$. Therefore,~$X_G$ is a factor of the product of a countable subshift and a fullshift, so~$X_G\in\gen$.
\end{proof}

\begin{proof}[Proof of Theorem \ref{thm_1D_SFT}]
Let~$G$ be a finite directed graph. By Lemma \ref{lem_transitive}, there exists~$k$ such that~$G^k$ is transitive. By Lemma \ref{lem_transitive_gen},~$X_{G^k}\in\gen$. We show that the~$k$-power presentation~$[X_G]_k$ is a factor of the product of~$X_{G^k}$ with a fullshift. Let~$\Sigma$ be the alphabet of~$[X_G]_k$, which is the set of walks~$(v_0,\ldots,v_{k-1})$ in~$G$. For each pair of vertices~$u,v$, let~$\Sigma_{u,v}\subseteq\Sigma$ be the set of walks~$(v_0,\ldots,v_{k-1})$ such that~$v_0=u$ and~$(v_{k-1},v)\in E$. When~$\Sigma_{u,v}\neq\emptyset$, let~$f_{u,v}:\Sigma\to\Sigma_{u,v}$ be a surjective function which is the identity on~$\Sigma_{u,v}$. We define a factor map~$F:X_{G^k}\times \Sigma^Z\to [X_G]_k$. Given~$x\in X_{G^k}$ and~$s\in\Sigma^\Z$, let~$y=F(x,s)$ be defined by~$y_n=f_{x_n,x_{n+1}}(s_n)$.

First, it is well-defined and~$y\in [X_G]_k$. Indeed,~$(x_n,x_{n+1})$ is an edge in~$G^k$ so there is a walk of length~$k$ from~$x_n$ to~$x_{n+1}$ in~$G$, and~$f_{x_n,x_{n+1}}(s_n)$ is such a walk (without its last vertex).

Every~$y\in [X_G]_k$ belongs to~$\im{F}$. Let~$x_n$ be the starting vertex of~$y_n$ and~$s_n=y_n$. One has~$f_{x_n,x_{n+1}}(s_n)=y_n$ so~$F(x,s)=y$.

The function~$F$ is therefore surjective. It is continuous and commutes with the shift action, because~$F(x,s)_n$ is determined by~$x_n,x_{n+1},s_n$ by a rule that does not depend on~$n$. Therefore,~$[X_G]_k$ is a factor of~$X_{G^k}\times \Sigma^Z$. As~$X_{G^k}$ belongs to~$\gen$, so do~$[X_G]_k$ and~$X_G$ by Theorem \ref{thm_block_gen}.
\end{proof}

%ssss
\subsection{Bicolor Wang tilesets}
We now present two tilesets that belong to~$\gen$.

%sssssss
\subsubsection{The wires}\label{sec_all_even}
We start with the tileset~$T$ that was mentioned in the introduction, shown in Figure \ref{fig_all_even}, and give the details.
\begin{figure}[!ht]
\centering
\includegraphics{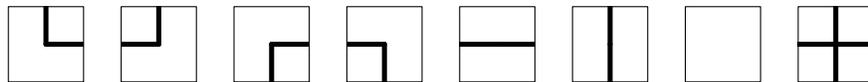}
\caption{The wires}\label{fig_all_even}
\end{figure}

The induced subshift~$X_T$ belongs to~$\gen$ because it is a factor of the full-shift~$\{0,1\}^{\Z^2}$. The factor map is given by a rule determining the tile at position~$(i,j)$ from the 4 bits at positions~$(i,j),(i+1,j),(i,j+1),(i+1,j+1)$, which can be thought as lying on the corners of the tile. The rule assigns to each edge the sum modulo $2$ of the bits lying at the endpoints of the edge (see Figure \ref{fig_all_even_rule}).
\begin{figure}[!ht]
\centering
\includegraphics{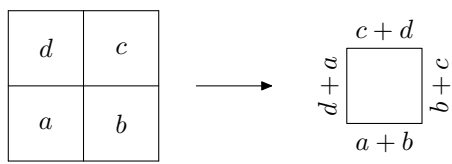}
\caption[Local rule]{The rule determined a tile from $4$ bits at its corners}\label{fig_all_even_rule}
\end{figure}

One easily checks that:
\begin{itemize}
\item The tiles produced by this rule are even: indeed, given bits~$a,b,c,d$, the sum~$(a+b)+(b+c)+(c+d)+(d+a)$ is even,
\item The tiles match: if two cells are adjacent, then the rule applied to each cell will assign the same color to their common edge, because it only depends on the bits lying at the endpoints of the edge,
\item All the valid configurations are reached: given a valid configuration~$y$, let~$x\in\{0,1\}^{\Z^d}$ assign to~$(i,j)$ the parity of the number of black edges that are met on an arbitrary path from the origin to~$(i,j)$ (the points of~$\Z^d$ are the corners of the tiles, and the path is a finite sequence of edges). This process does not depend on the path: as the tiles are even, any closed loop meets an even number of black edges, so two path starting and ending at the same points have the same parity. Applying the rule to~$x$ will produce~$y$: let~$e$ be an edge between vertices~$u$ and~$v$, let~$p_u$ be a path from the origin to~$u$ and~$p_v$ be the path from the origin to~$v$ obtained by adding~$e$ at the end of~$p_u$; the color of~$e$ in~$y$ is indeed the sum modulo~$2$ of the parities of~$p_u$ and~$p_v$. 
\end{itemize}
%
%For~$N,S,E,W\in\{0,1\}$, let~$t(N,S,E,W)$ be the tile with colors~$N,S,E,W$ on the north, south, east and west edges respectively. Note that~$T$ is the set of all the \emph{even} tiles, i.e.~$T=\{t(N,S,E,W):N+S+E+W=0\mod 2\}$.
%
%An
%
%
%The tile at position~$(i,j)$ is determined by the 4 bits at positions~$(i,j),(i+1,j),(i,j+1),(i+1,j+1)$, which can be thought as lying on the corners of the square. An edge is colored black if and only if its two endpoints have different bits. Every tile obtained this way is even, because its parity is twice the sum of the 4 bits lying at its corners, and the tiles obviously match. Every valid configuration~$y$ is reached: let~$x\in\{0,1\}^{\Z^2}$ associate to each point~$p\in \Z^2$ the parity of the number of wires intersecting any path from~$(0,0)$ to~$p$.
%
%
%Formally, the factor map is
%\begin{gather*}
%f:\{0,1\}^{\Z^2}\to X\\
%f(x)_{i,j}=t(x_{i,j+1}+x_{i+1,j+1},x_{i+1,j}+x_{i,j},x_{i+1,j}+x_{i+1,j+1},x_{i,j+1}+x_{i,j}).
%\end{gather*}

Figure \ref{fig_random_all_even} shows a configuration obtained by applying this procedure starting from a random grid of bits.
\begin{figure}[!ht]
\centering
\includegraphics{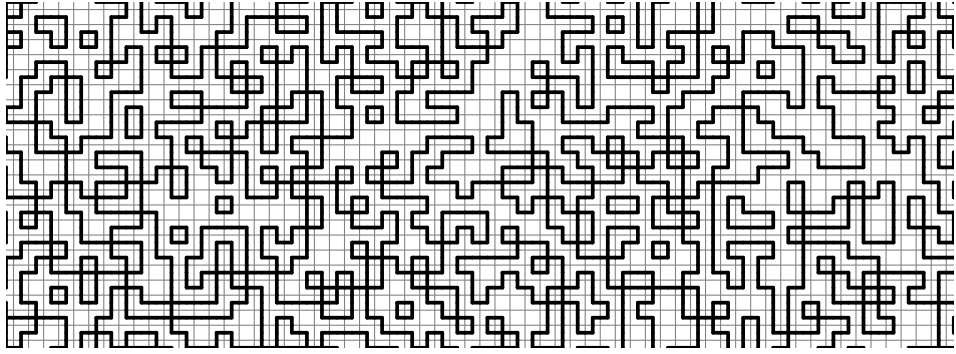}
\caption[Random tiling]{A random tiling with the wires}\label{fig_random_all_even}
\end{figure}

%ssssss
\subsubsection{Corners}\label{sec_corners}
The tileset shown in Figure \ref{fig_corners} belongs to~$\gen$.
\begin{figure}[!ht]
\centering
\includegraphics{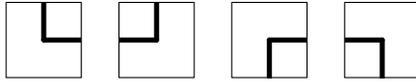}
\caption{The corners}\label{fig_corners}
\end{figure}

This tileset is the set of all the bicolor tiles whose opposite edges have opposite colors. It implies that for each pair of adjacent edges, all the colorations of these edges are allowed and each coloration uniquely determines the colors of the other two edges. Therefore, one can generate a valid configuration by assigning arbitrary colors to the edges lying on the $x$-axis and $y$-axis, and this assignment uniquely determines the tiles in all the cells. Moreover, the tile at position~$(i,j)$ can be easily obtained in a local way, by reading the colors of edge~$i$ on the~$x$-axis and the color of edge~$j$ on the vertical axis (see Figure \ref{fig_corners_rule}).%: for each~$(i,j)\in\Z^d$, the color of the edge from~$(i,j)$ to~$(i+1,j)$ is the sum modulo $2$ of~$j$ and the color of the edge from~$(i,0)$ to~$(i+1,0)$; the color of the edge from~$(i,j)$ to~$(i,j+1)$ is the sum modulo $2$ of~$i$ and the color of the edge from~$(0,j)$ to~$(0,j+1)$.
\begin{figure}[!ht]
\centering
\includegraphics{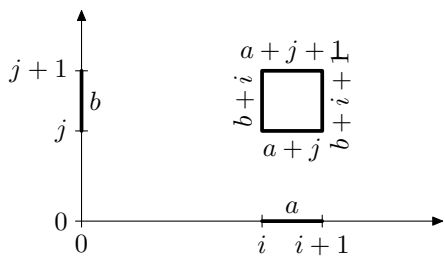}
\caption[Local rule]{The rule determining a tile from the colors on the edges on the $x$-axis and $y$-axis. The numbers~$i,j\in\Z$ are the coordinates, and~$a,b\in\{0,1\}$ are the colors. The colors of the edges are understood modulo 2.}\label{fig_corners_rule}
\end{figure}

%
%If one is given two sequences of bits~$a=(a_i)_{i\in\Z}$ and~$b=(b_j)_{j\in\Z}$ as inputs, then the rule determining the tile at position~$(i,j)$ only reads~$a_i$ and~$b_j$. One can see~$a$ as an element of~$\{0,1\}^{\Z^2/(1,0)\Z}$,~$b$ as an element of~$\{0,1\}^{\Z^2/(0,1)\Z}$ and the procedure as a surjective function~$f:\{0,1\}^{\Z^2/(1,0)\Z}\times \{0,1\}^{\Z^2/(0,1)\Z}\to X$. The input space of~$f$ can be expressed as~$\{0,1\}^E$ with~$E={\Z^2/(1,0)\Z}\sqcup {\Z^2/(0,1)\Z}$. The function~$f$ is local, because the output content of~$p$ is determined by~$p\cdot R$, where~$R$ contains two elements, the zeroes of~$\Z^2/(1,0)\Z$ and of~$\Z^2/(0,1)\Z$.
%
% The rule at cell~$(i,j)$ depends on the parities of~$i$ and~$j$, so~$f$ is strongly periodic of period~$2$. As a result,~$X$ belongs to~$\gen$ by Proposition \ref{prop_local_periodic}, and is the image of the factor map~$F:\Z^2/2\Z^2\times\{0,1\}^E\to X$ sending~$(p,x)$ to~$\sigma^p\circ f\circ\sigma^{-p}(x)$.

In more concrete terms,~$X_T$ is a factor of
\begin{equation*}
\Z^2/2\Z^2\times \{0,1\}^{\Z^2/(1,0)\Z}\times \{0,1\}^{\Z^2/(0,1)\Z}
\end{equation*}
and the factor map~$f$ can be expressed as follows. For~$a,b\in \Z/2\Z$, let~$t(a,b)\in T$ be the unique tile with colors~$a$ and~$b$ on its lower and left edges respectively. For~$(k,l)\in \Z^2/2\Z^2$,~$a\in \{0,1\}^{\Z^2/(1,0)\Z}$ and~$b\in \{0,1\}^{\Z^2/(0,1)\Z}$, the tile at position~$(i,j)$ in~$f((k,l),a,b)$ is
\begin{equation*}
t(a_i+l+j,b_j+k+i).
\end{equation*}

%
%Let~$K=\Z/2\Z$ with the~$\Z^2$-action~$(k,l)\cdot b=b+k+l\mod 2$ ($K$ can be seen as~$\Z^2/H$ with~$H=\{(x,y)\in\Z^2:x+y\mod 2=0\}\subseteq\Z^2$).
%
%\begin{equation*}
%f:K\times \{0,1\}^{\Z^2/(0,1)\Z}\times \{0,1\}^{\Z^2/(1,0)\Z}
%\end{equation*}
%be defined by~$f(b,x,y)_{i,j}=t(x(i)+i+j+b,y(j)+i+j+b)$.
%
%One has
%\begin{align*}
%(\sigma^{(k,l)}\circ f(b,x,y))_{i,j}&=f(b,x,y)_{k+i,l+j}\\
%&=t(x(k+i)+k+i+l+j+b,y(l+j)+k+i+l+j+b)\\
%%&=t(x(k+i)+a+k+b+l+i+j,\sigma^k(x),\sigma^l(y))_{i,j}\\
%&=f(b+k+l,\sigma^k(x),\sigma^l(y))_{i,j}\\
%&=f((k,l)\cdot(b,x,y))_{i,j}
%\end{align*}
%so~$f$ is indeed a factor map.

Figure \ref{fig_random_corners} shows a configuration obtained by applying this procedure to a pair of random bit sequences.
\begin{figure}[!ht]
\centering
\includegraphics{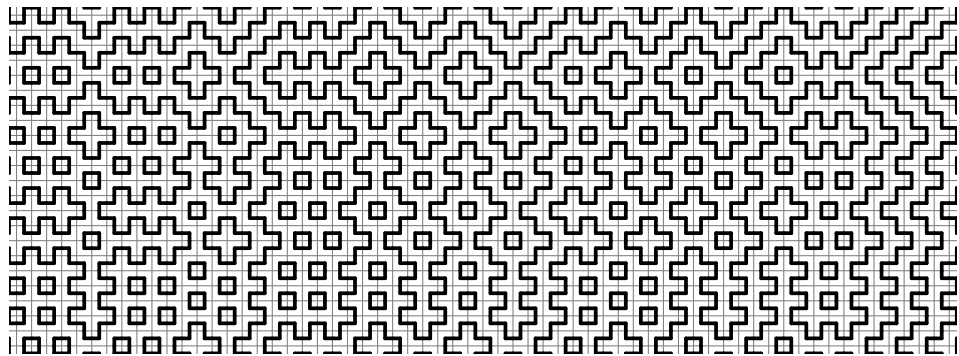}
\caption[Random tiling]{A random tiling with the corners}\label{fig_random_corners}
\end{figure}

%SSS
\section{Classification of the even bicolor tilesets}\label{sec_classification}
In a forthcoming article \cite{FH24b}, we classify all the \textbf{even bicolor tilesets}, which are the Wang tilesets with two colors in which each tile has an even number of each color. We summarize the classification below, all the details are presented in \cite{FH24b}.

There are 8 even bicolor tiles, which we draw using wires instead of colors for aesthetic reasons: \smalltile{1100}, \smalltile{0110}, \smalltile{0011}, \smalltile{1001}, \smalltile{1010}, \smalltile{0101}, \smalltile{0000}, \smalltile{1111}. There are~$2^8-1=255$ non-empty even bicolor tilesets. However, many tilesets are symmetric versions of each other, implying that the induced subshifts are weakly conjugate, so we only need to consider one tileset for each equivalence class. It turns out that there are 36 equivalence classes. Moreover, 8 of them are not minimal in the sense that some tile cannot be used in any tiling, so these tilesets induce the same subshift as a smaller tileset (which is possibly empty). All in all, there are 28 remaining subshifts:
\begin{itemize}
\item 13 subshifts belong to $\gen$ (2 of them were presented in Sections \ref{sec_all_even} and \ref{sec_corners}),
\item 8 subshifts do not belong to $\gen$; we expect that they do not belong to $\nar$, but leave the question open,
\item 7 subshifts do not belong to $\nar$.
\end{itemize}

Table \ref{table_even} shows a tileset for each one of these 28 equivalence classes. Table \ref{table_even_stupid} shows the 8 equivalence classes of non-minimal tilesets, with one tileset per class (they induce the empty subshift or the same subshift as a smaller tileset already given in Table \ref{table_even}).

\begin{table}[!ht]
\centering
\begin{tabular}{|l|l|l|}
\hline
\rule{0mm}{2.5ex}In $\gen$&Not in $\gen$ (not in $\nar$?)&Not in $\nar$\\
\hline\hline
\rule{0mm}{3.2ex}\includegraphics{even_classes/small_class-1-0}&\includegraphics{even_classes/small_class-14-0}&\includegraphics{even_classes/small_class-19-4}\\
\includegraphics{even_classes/small_class-16-0}&\includegraphics{even_classes/small_class-23-0}&\includegraphics{even_classes/small_class-9-1}\\
\includegraphics{even_classes/small_class-4-0}&\includegraphics{even_classes/small_class-15-0}&\includegraphics{even_classes/small_class-20-6}\\
\includegraphics{even_classes/small_class-10-0}&\includegraphics{even_classes/small_class-27-0}&\includegraphics{even_classes/small_class-32-1}\\
\includegraphics{even_classes/small_class-17-0}&\includegraphics{even_classes/small_class-34-0}&\includegraphics{even_classes/small_class-21-8}\\
\includegraphics{even_classes/small_class-18-4}&\includegraphics{even_classes/small_class-25-0}&\includegraphics{even_classes/small_class-24-1}\\
\includegraphics{even_classes/small_class-11-0}&\includegraphics{even_classes/small_class-29-1}&\includegraphics{even_classes/small_class-33-2}\\
\includegraphics{even_classes/small_class-31-0}&\includegraphics{even_classes/small_class-35-0}&\\\cline{2-3}
\multicolumn{3}{|l|}{\includegraphics{even_classes/small_class-22-0}}\\
\multicolumn{3}{|l|}{\includegraphics{even_classes/small_class-26-0}}\\
\multicolumn{3}{|l|}{\includegraphics{even_classes/small_class-28-0}}\\
\multicolumn{3}{|l|}{\includegraphics{even_classes/small_class-30-0}}\\
\multicolumn{3}{|l|}{\includegraphics{even_classes/small_class-36-0}}\\
\hline
\end{tabular}
\caption{Classification of the even bicolor tilesets}\label{table_even}
\end{table}

%For the subshifts that do not belong to~$\gen$, we do not know whether they belong to~$\nar$, although we expect that it is not the case.

%Some of the tilesets are not interesting because either they do not admit any tiling, i.e.~they induce the empty subshift, or one of the tiles cannot be used in a tileset, so they induce the same subshift as one of the tilesets from Table \ref{table_even}.
\begin{table}[!ht]
\centering
\begin{tabular}{|l|l|}
\hline
\rule{0mm}{2.5ex}Empty subshift&Unused tile\\
\hline\hline
\rule{0mm}{3.2ex}\includegraphics{even_classes/small_class-2-1}&\includegraphics{even_classes/small_class-3-4}\\
\includegraphics{even_classes/small_class-6-0}&\includegraphics{even_classes/small_class-5-1}\\
&\includegraphics{even_classes/small_class-7-0}\\
&\includegraphics{even_classes/small_class-12-0}\\
&\includegraphics{even_classes/small_class-8-0}\\
&\includegraphics{even_classes/small_class-13-0}\\
\hline
\end{tabular}
\caption{The non-minimal even bicolor tilesets}\label{table_even_stupid}
\end{table}

%SSS
\section{Future directions}\label{sec_future}

The main problem that is left open is to settle whether the triangles and domino subshifts from Sections \ref{sec_triangles} and \ref{sec_dominos} belong to~$\nar$. We do not expect so.

The main techniques developed in this article, Theorems \ref{thm_obs_C0} and \ref{thm_notin_nar} apply to weakly mixing subshifts only. A natural problem is to investigate subshifts that are not weakly mixing.

The definitions of~$\gen$ and~$\nar$ are first attempts to capture an intuitive idea, but other definitions of local generation are probably possible. For instance, in Proposition \ref{prop_ZbarZ} we considered the two-dimensional SFT~$X$ on the alphabet~$\{\whitetile,\blacktile\}$ with forbidden pattern \raisebox{-.5mm}{\includegraphics{Tiles/tiles-6}}, and we showed that~$X\notin \nar$. However, one can generate any configuration of~$X$ by the following procedure: first assign an element of~$\Zb$ to each row, and then fill each cell according to the element assigned to its row. This procedure is a function~$h:\Zb^\Z\to X$ such that each output cell is determined by \emph{one} input cell, so it is like a~$1$-narrow function, except that the input alphabet~$\Zb$ is not finite, but compact countable. In other words, each input cell is filled with a finite but unbounded amount of information.

This example naturally leads to a natural relaxation of~$\nar$, in which narrow functions are allowed to work on countable or compact countable alphabets on the input side. It would be interesting to investigate this class as well as other possible relaxations, and understand how much of the landscape is modified.

\section*{Acknowledgments}
\addcontentsline{toc}{section}{Acknowledgements}

We thank Guilhem Gamard, Emmanuel Jeandel, Julien Provillard and Alexis Terrassin for interesting discussions on the subject.

%\listoffigures

\newpage
\appendix

%SSSSS
\section{Background}\label{sec_back}
We state classical results that are used throughout the article.
%ssss
\subsection{Ramsey theory}
We will use two celebrated results from Ramsey theory. They can be found as Theorem 26.3 and 26.4 in \cite{Jukna11}.
\begin{theorem}[(Van der Waerden)]\label{thm_van_der_waerden}
Let~$C$ be a finite set and~$L\geq 1$. There exists~$N=N(C,k)$ such that every coloring~$c:[0,N]\to C$ admits a monochromatic arithmetic progression of length~$N$, i.e.~there exist~$a,\lambda\geq 1$ such that~$c$ is constant on~$\{a,a+\lambda,\ldots,a+(L-1)\lambda\}\subseteq [0,N]$.
\end{theorem}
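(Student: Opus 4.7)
The plan is to prove the statement by induction on $L$, using the classical color-focusing argument due to van der Waerden. The base case $L = 1$ is trivial (a single point is a monochromatic AP of length one), and $L = 2$ follows immediately from the pigeonhole principle once $N \geq |C|$. So the real work begins at the inductive step.

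For the inductive step, I would assume the theorem holds for length $L-1$ and every finite color palette, and prove it for length $L$ and an arbitrary palette $C$. The central notion is a \emph{focused fan of size $r$}: a collection of $r$ arithmetic progressions of length $L-1$, with pairwise distinct common differences and pairwise distinct colors, all positioned so that one further step along each progression would land on a common point $z$ (the \emph{focus}). The point of this definition is that if we manage to produce a fan of size $|C|+1$, then by pigeonhole the color of the focus $z$ must agree with the color of at least one of the $|C|+1$ fan legs, and extending that leg by one step yields a monochromatic AP of length $L$.

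The core of the proof is then an inner induction on $r$: I would show that for each $r \leq |C|+1$ there exists $M_r \in \N$ such that every $C$-coloring of $[0, M_r]$ either already contains a monochromatic AP of length $L$, or contains a focused fan of size $r$ with focus lying in $[0, M_r]$. The step from $r$ to $r+1$ uses Van der Waerden at length $L-1$ applied to a block-coloring: partition a sufficiently long interval into consecutive blocks of length $M_r$, color each block by its full $C$-pattern (a coloring over the finite alphabet $C^{M_r+1}$), and apply the inductive hypothesis at length $L-1$ to this coarser coloring. This yields an arithmetic progression of $L-1$ identically-colored blocks; inside the first of these blocks we find by $r$-induction a fan of size $r$, and the AP of blocks lets us shift this fan along, producing a new leg with a fresh common difference while preserving the focus. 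Bookkeeping shows that the new fan still has distinct colors (using either the inductive hypothesis, or we are already done by finding a monochromatic AP of length $L$).

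Taking $r = |C|+1$ gives the desired bound $N(C, L) := M_{|C|+1}$. The main obstacle is the bookkeeping in the inner induction: one must choose the length of the block-coloring large enough so that the resulting focus still lies in $[0, M_{r+1}]$, verify that the distinct common differences produced by different blocks remain distinct, and ensure the new leg's color is genuinely different from the previously used ones (otherwise one already has a monochromatic AP of length $L$ and can terminate early). These verifications are routine but notationally heavy, and the resulting bound on $N(C, L)$ is enormous (Ackermann-type), although the statement only asks for existence.
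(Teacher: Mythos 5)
The paper does not prove this statement at all: Van der Waerden's theorem appears in the appendix purely as a cited classical result (Theorem 26.3 of Jukna's book), so there is no proof in the paper to compare against. Your sketch is the standard double-induction, colour-focusing argument, and its overall architecture is correct: outer induction on $L$ over all finite palettes, inner induction on the fan degree $r$, and the block-colouring over the alphabet $C^{M_r+1}$ to which the length-$(L-1)$ case is applied. One small imprecision: a fan of size $|C|+1$ whose legs have pairwise distinct colours cannot exist, so the sentence ``if we manage to produce a fan of size $|C|+1$, then by pigeonhole the colour of the focus agrees with one of the legs'' is not quite the right way to close the argument. The clean version is either (i) stop at $r=|C|$: the legs then exhaust all colours, so the focus's colour matches some leg and extending that leg gives the length-$L$ progression; or (ii) phrase the inner induction exactly as you do (``mono AP of length $L$ \emph{or} fan of size $r$ with distinct colours'') and observe that at $r=|C|+1$ the second alternative is vacuous, forcing the first. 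Either repair is one line, and the rest of your outline (including the acknowledged but omitted bookkeeping on foci, differences and the Ackermann-type bounds) matches the classical proof. You might also note that the statement as printed in the paper contains typos ($N(C,k)$ for $N(C,L)$, ``progression of length $N$'' for ``length $L$''), which you have silently and correctly read through.
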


The second result is a generalization in higher dimensions. A homothetic copy of a set~$F\subseteq \Z^m$ is~$u+\lambda F=\{u+\lambda f:v\in F\}$ for some~$u\in\Z^m,\lambda\in\Z$.
\begin{theorem}[(Gallai-Witt)]\label{thm_gallai_witt}
Let~$C$ be a finite set and~$c:\Z^m\to C$ a coloring. Every finite subset of~$\Z^m$ has a homothetic copy which is monochromatic.
\end{theorem}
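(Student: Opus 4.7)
My plan is to derive Gallai-Witt's theorem from the Hales-Jewett theorem, a purely combinatorial strengthening of Van der Waerden that lives over an abstract alphabet rather than in~$\Z$. I would first state Hales-Jewett: for every finite alphabet~$A$ and every~$r\geq 1$, there exists~$N=HJ(|A|,r)$ such that every~$r$-coloring of~$A^N$ contains a monochromatic \emph{combinatorial line}, i.e.\ a set of the form~$\{w^a:a\in A\}\subseteq A^N$ indexed by~$A$ where the tuples~$w^a$ agree on a fixed set of coordinates~$\{1,\ldots,N\}\setminus S$ and satisfy~$w^a_i=a$ for~$i$ in some common nonempty set~$S$ of ``active'' coordinates.

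The reduction to Gallai-Witt is then essentially formal. Given a finite~$F=\{v_1,\ldots,v_k\}\subseteq\Z^m$ and a coloring~$c:\Z^m\to C$ with~$r=|C|$, set~$N=HJ(k,r)$ and define the additive encoding~$\Phi:\{1,\ldots,k\}^N\to\Z^m$ by~$\Phi(a_1,\ldots,a_N)=v_{a_1}+\cdots+v_{a_N}$. Apply Hales-Jewett to the pullback coloring~$c\circ\Phi$ to obtain a monochromatic combinatorial line with active set~$S$ and fixed values~$\tau(i)$ for~$i\notin S$. For each~$a\in\{1,\ldots,k\}$, the corresponding point of the line satisfies~$\Phi(w^a)=u+|S|\,v_a$ with~$u=\sum_{i\notin S}v_{\tau(i)}$ independent of~$a$. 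Hence the image of the line under~$\Phi$ is exactly the homothetic copy~$u+|S|\cdot F$ of~$F$, with dilation~$\alpha=|S|\geq 1$, and it is monochromatic by construction.

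The main obstacle is of course proving Hales-Jewett itself, which carries all the combinatorial weight. I would follow the classical \emph{focusing} argument by induction on~$|A|$: assuming the result for alphabets of size strictly less than~$|A|$, one fixes a distinguished letter and inductively builds chains of ``focused'' partial lines in progressively larger cubes, using the inductive hypothesis to control color patterns on the coordinates already dealt with; a pigeonhole argument then forces, after at most~$r+1$ focusing steps, either the early appearance of a monochromatic line or a configuration that closes up to produce one. This step is delicate but standard, and once it is in hand the translation via~$\Phi$ above is routine. Alternatively, one can appeal to Hales-Jewett as a cited black box, since it is the textbook tool for exactly this kind of higher-dimensional Ramsey statement.
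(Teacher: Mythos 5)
The paper does not prove this statement: it is quoted as a classical background result with a pointer to Jukna's book, where (as Theorem 26.4) it is derived from the Hales--Jewett theorem in exactly the way you describe. Your reduction is correct: the additive encoding $\Phi(a_1,\ldots,a_N)=v_{a_1}+\cdots+v_{a_N}$ sends a monochromatic combinatorial line with active set $S$ and fixed part $\tau$ to the set $\{u+|S|\,v_a : a\in\{1,\ldots,k\}\}=u+|S|\cdot F$ with $u=\sum_{i\notin S}v_{\tau(i)}$, and the dilation $\lambda=|S|\geq 1$ is exactly what the paper needs when it applies Gallai--Witt (a homothetic copy with positive ratio). Taking Hales--Jewett itself as a cited black box, or proving it by the standard colour-focusing induction you sketch, is entirely in keeping with how the paper treats this appendix material, so there is nothing to object to.
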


%sss
\subsection{Group actions}
Let~$G$ be a group (we will only use~$G=(\Z^d,0,+)$ in this article). If~$H$ is a subgroup of~$G$, then~$G$ acts on the cosets~$G/H$ in a natural way (note that~$G/H$ is not necessarily a group because~$H$ may not be normal, but it is always a~$G$-set). The next result shows that every~$G$-action can be decomposed into a disjoint union of such actions.

\begin{theorem}[(Orbit-stabilizer theorem)]\label{thm_orbit_stab}
Let~$G$ be a group. Every~$G$-set~$X$ can be decomposed as~$X=\bigsqcup_{i\in I} G/H_i$ where~$I$ is a set,~$H_i$ is a subgroup of~$G$, and~$G$ acts on~$G/H_i$ in the natural way.
\end{theorem}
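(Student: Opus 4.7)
The plan is to prove the orbit-stabilizer theorem by the standard route: first partition $X$ into its $G$-orbits, then identify each orbit with a coset space via the stabilizer of a chosen basepoint.

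First, I would observe that the relation $x \sim y \iff \exists g \in G, g \cdot x = y$ is an equivalence relation on $X$ (reflexivity uses the identity, symmetry uses inverses, transitivity uses composition). This partitions $X$ into orbits, so $X = \bigsqcup_{i \in I} O_i$ where $I$ indexes the orbits. Moreover each $O_i$ is clearly $G$-invariant, so it is itself a $G$-set, and the decomposition is a decomposition of $G$-sets.

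Next, for each $i \in I$, choose a basepoint $x_i \in O_i$ and define its stabilizer $H_i = \{g \in G : g \cdot x_i = x_i\}$, which is a subgroup of $G$. I would then define a map $\varphi_i : G/H_i \to O_i$ by $\varphi_i(gH_i) = g \cdot x_i$ and verify three things: (a) well-defined, because if $gH_i = g'H_i$ then $g^{-1}g' \in H_i$, so $g' \cdot x_i = g \cdot (g^{-1}g') \cdot x_i = g \cdot x_i$; (b) injective, because $g \cdot x_i = g' \cdot x_i$ forces $g^{-1}g' \in H_i$, hence $gH_i = g'H_i$; (c) surjective, because by definition every element of $O_i$ is of the form $g \cdot x_i$ for some $g \in G$.

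Finally, I would check that $\varphi_i$ intertwines the two $G$-actions: for any $h \in G$, one has $\varphi_i(h \cdot gH_i) = \varphi_i((hg)H_i) = (hg) \cdot x_i = h \cdot (g \cdot x_i) = h \cdot \varphi_i(gH_i)$. Combining the $\varphi_i$ into a single bijection $\bigsqcup_{i \in I} G/H_i \to X$ then yields the required isomorphism of $G$-sets. The argument is entirely routine and I do not foresee a real obstacle; the only step worth a moment's care is the well-definedness of $\varphi_i$, which is where the choice of $H_i$ as the \emph{stabilizer} (rather than some arbitrary subgroup) is essential.
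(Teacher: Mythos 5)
Your proof is correct and is exactly the argument the paper sketches in the sentence following the theorem: decompose $X$ into orbits and identify each orbit $G$-equivariantly with $G/G_x$ via the stabilizer of a chosen basepoint. Nothing to add.
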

Indeed, the orbit of any~$x\in S$ is in bijection with~$G/G_x$ where~$G_x=\{g\in G:g\cdot x=x\}$ is the stabilizer of~$x$, and the orbits are pairwise disjoint.

If~$G,H$ are groups,~$X$ is a~$G$-set and~$Y$ is an~$H$-set, then a \textbf{homomorphism} from~$X$ to~$Y$ is a pair~$(f,\varphi)$ where~$f:X\to Y$ is a surjective function and~$\varphi:H\to G$ is a group homomorphism satisfying~$h\cdot f(x)=f(\varphi(h)\cdot x)$.

For instance, if~$X$ is a~$G$-set and~$\varphi:H\to G$ is a group homomorphism, then~$X$ is an~$H$-set with the action~$h\cdot x=\varphi(h)\cdot x$.

%sss
\subsection{Baire category}
\begin{theorem}[(Baire category theorem)]\label{thm_baire}
Let~$X$ be a completely metrizable topological space. If~$X=\bigcup_{n\in\N} X_n$ where each~$X_n$ is closed, then some~$X_n$ has non-empty interior.
\end{theorem}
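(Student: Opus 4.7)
The plan is to prove the contrapositive via the standard ``dense open sets'' formulation of Baire's theorem. Suppose for contradiction that every~$X_n$ has empty interior. Setting~$U_n=X\setminus X_n$, each~$U_n$ is then open and dense in~$X$. Moreover~$\bigcap_n U_n=X\setminus \bigcup_n X_n=\emptyset$, so it suffices to establish the following auxiliary claim: in a completely metrizable space, the intersection of a countable family of dense open sets is dense (in particular, non-empty, which is all we need here).

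To prove this auxiliary claim, fix a complete metric~$d$ compatible with the topology, and fix an arbitrary non-empty open set~$V\subseteq X$; I will exhibit a point of~$V\cap \bigcap_n U_n$. The idea is to construct inductively a nested sequence of closed balls~$\overline{B}(x_n,r_n)$ with radii~$r_n\to 0$, each contained in the previous open ball and in the corresponding~$U_n$. Concretely, since~$U_0$ is open and dense,~$V\cap U_0$ is non-empty and open, so it contains a closed ball~$\overline{B}(x_0,r_0)$ with~$r_0\leq 1$. Inductively, given~$\overline{B}(x_n,r_n)$, the set~$B(x_n,r_n)\cap U_{n+1}$ is non-empty (by density of~$U_{n+1}$) and open, so it contains some closed ball~$\overline{B}(x_{n+1},r_{n+1})$ with~$r_{n+1}\leq 2^{-(n+1)}$.

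The sequence~$(x_n)$ is Cauchy because for~$m\geq n$ one has~$x_m\in \overline{B}(x_n,r_n)$, so~$d(x_n,x_m)\leq r_n\to 0$. By completeness,~$x_n$ converges to some~$x\in X$. For each fixed~$n$, all sufficiently large~$x_m$ lie in the closed ball~$\overline{B}(x_n,r_n)$, so the limit~$x$ lies in it as well, and hence~$x\in U_n$. Also~$x\in \overline{B}(x_0,r_0)\subseteq V$. Therefore~$x\in V\cap \bigcap_n U_n$, proving density and finishing the argument.

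There is essentially no hard step here: the only subtlety is to pick the radii both to shrink to zero (to force Cauchyness) and to keep the closed balls nested strictly inside the previous \emph{open} balls (so that the limit point stays inside every~$U_n$). Once these two conditions are enforced simultaneously, the rest is a routine verification. No non-trivial result from the paper is needed; this is the classical proof of the Baire category theorem, placed in the appendix purely for reference.
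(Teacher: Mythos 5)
Your proof is the standard and correct argument for the Baire category theorem: pass to complements, reduce to the density of a countable intersection of dense open sets, and build a nested sequence of shrinking closed balls whose limit (supplied by completeness) lies in~$V$ and in every~$U_n$. The paper states this result in its appendix purely as classical background and gives no proof of its own, so there is nothing to compare against; your write-up is a correct, self-contained proof of the cited fact. The only caveat is the degenerate case~$X=\emptyset$, for which the statement as literally written fails and where your step ``dense, in particular non-empty'' breaks down; both the paper and your argument implicitly assume~$X$ is non-empty.
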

We will use the following particular case. Let~$A$ be a finite alphabet,~$Z$ a countable set and~$X\subseteq A^Z$ be a closed set for the product topology. If~$X=\bigcup_{n\in\N}X_n$ where each~$X_n$ is a closed set, then there exists~$n\in\N$ and a finite pattern~$\pi$ such that~$\emptyset\neq [\pi]\cap X\subseteq X_n$.

\end{document}